\newtheorem{thm}{Theorem}[section]
\newtheorem{prop}[thm]{Proposition}
\newtheorem{lemma}[thm]{Lemma}
\newtheorem{corol}[thm]{Corollary}
\newtheorem{definition}[thm]{Definition}
\theoremstyle{definition}
\newtheorem{exa}[thm]{Example}
\newtheorem{exas}[thm]{Examples}
\newtheorem{rmrk}[thm]{Remark}
\newtheorem*{thmnn}{\bf Theorem}
\newtheorem*{defnn}{\bf Definition}
\newtheorem{convention}[thm]{\bf Convention}
\newenvironment{ex}{\begin{exa}\rm}{\parbox{2mm}{\hfill}\hfill $\triangle$\end{exa}}
\newenvironment{rmark}{\begin{rmrk}\rm}{\parbox{2mm}{\hfill}\hfill $\triangle$\end{rmrk}}
\numberwithin{equation}{section}
\newcommand{\N}{{\mathbb N}}
\newcommand{\Z}{{\mathbb Z}}
\newcommand{\T}{{\mathbb T}}
\newcommand{\Cx}{{\mathbb C}}
\newcommand{\Oc}{{\mathcal O}}
\newcommand{\fun}{{{\mathbb F}_1}}
\newcommand{\funn}{{{\mathbb F}_{1^n}}}
\newcommand{\adj}{\otimes_{\fun}\N}
\newcommand{\adjz}{\otimes_{\fun}\Z}
\newcommand{\fq}{{\mathbb F}_q}
\newcommand{\spec}{\operatorname{Spec}}
\newcommand{\Hom}{\operatorname{Hom}}
\newcommand{\ihom}{\underline{\Hom}}
\newcommand{\dirlim}{\underrightarrow{\mathrm{lim}}}
\newcommand{\invlim}{\underleftarrow{\mathrm{lim}}}
\newcommand\Ccat{{\mathsf C}}
\newcommand\Dcat{{\mathsf D}}
\newcommand\Fcat{{\mathsf F}}
\newcommand\Bcat{{\mathsf B}}
\newcommand\Gcat{{\mathsf G}}
\newcommand\Set{{\mathsf{Set}}}
\newcommand\Mon{{\mathsf{Mon}}}
\newcommand\SRing{{\mathsf{SRing}}}
\newcommand\Ring{{\mathsf{Ring}}}
\newcommand\Ab{{\mathsf{Ab}}}
\newcommand\mon{{\mathsf{CMon}}}
\newcommand\Mod{{\mathsf{Mod}}}
\newcommand\aff{{\mathsf{Aff}}}
\newcommand\sch{{\mathsf{Sch}}}
\newcommand\sh{{\mathsf{Sh}}}
\newcommand\psh{{\mathsf{Presh}}}
\newcommand\bluep{{\mathsf {Blp}}}
\newcommand\nbluep{{\widetilde{\bluep}}}
\newcommand\nbcat{{\widetilde{\Bcat}}}
\newcommand\trait{\,\text{-}\,}
\newcommand\cF{{\mathcal F}}
\newcommand\cG{{\mathcal G}}
\title[Blueprints and $\fun$-schemes]{\bf Some remarks on blueprints and $\fun$-schemes}
\begin{document}
\thispagestyle{empty}
\subjclass[2010]{14A, 18D10}
\keywords{$\fun$-schemes, blueprints, relative algebraic geometry}
\date{last revised \today}
\thanks{\\C.~Bartocci was partially supported by {\sc prin}   ``Geometria delle variet\`a  algebriche'', by {\sc gnsaga-in}d{\sc am} and by the University of Genova through the research grant ``Aspetti matematici nello studio delle interazioni fondamentali''.}

\maketitle
\begin{center}{\sc \small Claudio Bartocci,$^\P, ^\S$ Andrea Gentili,$^\P$ and Jean-Jacques Szczeciniarz$^\S$}
\\[10pt]  \small 
$^\P$Dipartimento di Matematica, Universit\`a di Genova,  Genova, Italy \\[3pt]
  $^{\S}${Laboratoire SPHERE, CNRS, Universit\'e Paris Diderot (Paris 7), 75013 Paris, France}
\end{center}
\markright{\sc  C.~Bartocci, A.~Gentili, and J.-J.~Szczeciniarz}
\markleft{\sc Some remarks on blueprints and $\fun$-schemes}
\bigskip\bigskip
\begin{abstract} 
Over the past two decades several different approaches to defining a geometry over $\fun$ have been proposed. In this paper, relying on To\"en and Vaqui\'e's formalism \cite{TV}, we investigate a new category $\sch_\nbcat$ of schemes admitting a Zariski cover by affine schemes relative to the category of blueprints introduced by Lorscheid \cite{Lor12}. A blueprint, which may be thought of as a pair consisting of a monoid $M$ and a relation on the semiring $M\otimes_{\fun} \N$, is a monoid object in a certain symmetric monoidal category $\Bcat$, which is shown to be complete, cocomplete, and closed.  We prove that every  $\nbcat$-scheme $\Sigma$ can be associated, through  adjunctions, with both a classical scheme 
$\Sigma_\Z$ and a scheme $\underline{\Sigma}$ over $\fun$ in the sense of Deitmar  \cite{Dei}, together with a natural transformation $\Lambda\colon \Sigma_\Z\to \underline{\Sigma}\adjz$.  Furthermore, as an application, we show that  the category of ``$\fun$-schemes" defined by A.~Connes and C.~Consani in \cite{CC} can be naturally merged with that of $\nbcat$-schemes to obtain a larger category, whose objects we call ``$\fun$-schemes with relations''.
\end{abstract}

\maketitle

{\small \setcounter{tocdepth}{2}
 \tableofcontents}

\section{Introduction}
\subsection{A quick overview of $\fun$-geometry}
The nonexistent field $\fun$ made its first appearance in Jacques Tits's 1956 paper {\it Sur les analogues alg\'ebriques des groupes semi-simples complexes} \cite{Tits56}.\!\footnote{For a more detailed and exhaustive account of the development of $\fun$-geometry we refer to \cite{LeBruyn11} and \cite{Lor15}.} According to Tits, it was natural to call ``$n$-dimensional projective space over $\fun$'' a set of $n+1$ points, on which the symmetric group $\Sigma_{n+1}$ acts as the group of projective transformations. So, $\Sigma_{n+1}$ was thought of as the group of $\fun$-points of $SL_{n+1}$, and more generally it was conjectured that, for each algebraic group $G$,  one ought to have $W(G)= G(\fun)$, where $W(G)$ is the Weyl group of $G$.

A further strong motivation to seek for  a geometry over $\fun$ was the hope, based on the  multifarious analogies between number fields and function fields, to find some pathway to attack Riemann's hypothesis by mimicking Andr\'e Weil's celebrated proof. The idea behind that, as  explicitly stated in Yuri Manin's influential 1991--92 lectures \cite{ManL}   
and in Kapranov and Smirnov's unpublished paper \cite{KS}, was to regard $\spec \Z$, the final object of the category of schemes, as an arithmetic curve over the ``absolute point'' $\spec \fun$.  Manin's work drew inspiration from Kurokawa's paper \cite{Kur} together with Deninger's results about ``representations of zeta functions as regularized infinite determinants \cite{Den1, Den2, Den3} of certain `absolute Frobenius operators' acting upon a new cohomology theory".
Developing these insights, Manin suggested  a conjectural decomposition of the classical complete Riemann zeta function of the form \cite[eq.~(1.5)]{ManL}
\begin{align}\label{Maninequation}
Z\bigl(\overline{\spec \Z}, s\bigr) &: =  2^{-1/2} \pi^{-s/2} \Gamma(\frac{s}{2}) \zeta(s) = \frac{
\prod^{\tiny\hbox{reg}}_\rho  \frac{s-\rho}{2\pi}}{ {\frac{s}{2\pi}}\frac{s-1}{2\pi}}=\nonumber\\
 &\stackrel{?}{=} \frac {\hbox{det}^{\tiny\hbox{reg}} \bigl( \frac{1}{2\pi} (s\cdot\operatorname{Id} - \Phi ) \big\vert H^1_{?} (\overline{\spec \Z})\bigr)}
 {\hbox{det}^{\tiny\hbox{reg}} \bigl( \frac{1}{2\pi} (s\cdot\operatorname{Id} - \Phi ) \big\vert H^0_{?} (\overline{\spec \Z})\bigr)
 \hbox{det}^{\tiny\hbox{reg}} \bigl( \frac{1}{2\pi} (s\cdot\operatorname{Id} - \Phi ) \big\vert H^2_{?} (\overline{\spec \Z})\bigr)
 }\,,
\end{align}
where the notation $\prod^{\tiny\hbox{reg}}_\rho$ and $\hbox{det}^{\tiny\hbox{reg}}$  refers to ``zeta regularization'' of infinite products and the last hypothetical equality ``postulates the existence of a new cohomology theory $ H^{\bullet}_{?}$, endowed with a canonical `absolute Frobenius' endomorphism $\Phi$''. He conjectured, moreover, that the functions of the form  $\frac{s-\rho}{2\pi}$ in eq.~\ref{Maninequation} could be interpreted as zeta functions according to the definition
$$ Z(\T^\rho, s) = \frac{s-\rho}{2\pi}\,, \quad \rho\geq 0\,,$$
where ``Tate's absolute motive'' $\T$ was to be ``imagined as a motive of a one-dimensional affine line over the absolute point,
$\T^0 = \bullet = \spec \fun$''. 

The first full-fledged definition of variety over ``the field with one element'' was proposed by Christophe Soul\'e in the 1999 preprint \cite{Soule99}; five years later such definition was slightly modified by the same author in the paper \cite{Soule04}).  Taking as a starting point Kapranov and Smirnov's suggestion that $\fun$ should have an extension $\funn$ of degree $n$, Soul\'e insightfully posited that 
$$ \funn \otimes_{\fun} \Z= \Z[T] / (T^n -1) =: R_n\,.$$
Let $\mathsf R$ be the full subcategory of the category $\Ring$ of commutative rings generated by  the rings $R_n$, $n\geq 1$ and their finite tensor products. An affine variety $X$ over $\fun$ is then defined as a covariant functor $\mathsf R \to \Set$ plus some extra data such that there exists a unique (up to isomorphism) affine variety $X_\Z= X\otimes_\fun \Z$ over $Z$ along with an immersion $X \hookrightarrow X_\Z$ satisfying a suitable universal property \cite[D\'efinition 3]{Soule04}. In particular, one has a natural inclusion $X(\funn) \subset (X\otimes_\fun \Z)(R_n)$ for each $n\geq 1$. A notable result proven by Soul\'e was that smooth toric varieties can always be defined over $\fun$.

To formalize $\fun$-geometry Anton Deitmar adopted, in 2005, a different approach, which can be dubbed  as  ``minimalistic'' (using the evocative terminology introduced by Manin in \cite{Man10}). In his terse paper \cite{Dei}, Deitmar associates to each commutative monoid $M$ its ``spectrum over $\fun$'' $\spec M$ consisting of all prime ideals of $M$, i.e.~of all submonoids $P\subset M$ such that $xy \in P$ implies $x\in P$ or $y\in P$. The set $\spec M$ can be endowed with a topology and with a structure (pre)sheaf  $\mathcal O_M$ via localization, just as in the usual case of commutative rings. A topological space $X$ with a sheaf  $\mathcal O_X$ of monoids is then called a ``scheme over $\fun$'', if for every point $x\in X$ there is an open neighborhood $U\subset X$ such that $(U, {\mathcal O}_X\vert_U)$ is isomorphic to $(\spec M, \mathcal O_M)$ for some monoid $M$. The forgetuful functor $\Ring \to \Mon$ has a left adjoint given by $M\mapsto M\otimes_\fun \Z$ (in Deitmar's notation), and this functor extend to a functor  $\trait \otimes_\fun \Z$ from the category of schemes over $\fun$ to the category of classical schemes over $\Z$. Tit's 1957 conjecture stating that $GL_n(\fun)=\Sigma_n$ can be easily proven in Deitmar's theory. Indeed, since $\fun$-modules are just sets and $\funn\otimes_\fun \Z$ has to be isomorphic $\Z^n$, it turns out that $\funn$ can be identified with the set $\{1,\dots, n\}$ of $n$ elements. Hence
$$GL_n(\fun) = \operatorname{Aut}_\fun (\funn) = \operatorname{Aut}(1,\dots,n) = \Sigma_n\,.$$
It is not hard to show, moreover, that the functor $GL_n$ on rings over $\fun$ is represented by a scheme over $\fun$ \cite[Prop.~5.2]{Dei}. As for zeta functions, Deitmar defines, for a scheme $X$ over $\fun$ and for a prime $p$, the formal power series
$$Z_X(p, T) = \exp \bigl( \sum_{n=1}^\infty \frac{T^n}{n}\# X({\mathbb F}_{p^n}) \bigr)\,,    
$$
where ${\mathbb F}_{p^n}$ stands for the field of $p^n$ elements with only its monoidal multiplicative structure and 
$ X({\mathbb F}_{p^n})$ denotes the set of ${\mathbb F}_{p^n}$-valued points of $X$, and proves that $Z_X(p, T)$
coincides with the Hasse--Weil zeta function of $X\otimes_\fun {\mathbb F}_{p^n}$ \cite[Prop.~6.3]{Dei}. Albeit elegant, this result is a bit of a letdown, for --- as the author himself is ready to admit --- it is clear that ``this type of zeta function [...] does not give new insights''.

A natural and extremely general formalism for $\fun$-geometry was elaborated by Bertrand To\"en and Michel Vaqui\'e in their 2009 paper \cite{TV}, tellingly entitled {\em Au dessous de $\spec \Z$}, whose approach appears to be largely inspired by Monique Hakim's work \cite{Hak}.  The authors there showed  how to construct an ``algebraic geometry" relative to any  symmetric monoidal category $\Ccat =(\Ccat,\otimes, \mathbf{1})$, which is supposed to be complete, cocomplete and to admit internal homs. The basic idea is that  the category $\mon_\Ccat$ of commutative (associative and unitary) monoid objects in $\Ccat$ can be taken as a substitute for the category of commutative rings (the monoid objects in the category $\Ab = \Z\trait \Mod$ of Abelian groups) to the end of defining a suitable notion of ``scheme over $\Ccat$''. Each object $V$ of $\mon_\Ccat$ gives rise to the category $V\trait\Mod$ of $V$-modules and each morphism $V \to W$ in  $\mon_\Ccat$ determines a change of basis functor 
$\trait \otimes_V W \colon V\trait\Mod \to W\trait\Mod$;  the category of commutative $V$-algebras can be realized as the category of commutative monoids in $V\trait\Mod$ and is naturally equivalent to the category $V/{\mon_\Ccat}$. An affine scheme over $\Ccat$ is, by definition, an object of the opposite category  $\aff_\Ccat = \mon^{\text{op}}_\Ccat$ and the tautological contravariant functor $\mon_\Ccat \to \aff_\Ccat$ is called $\spec (\trait)$. By means of the  pseudo-functor $M$ that maps an object $V$ in $\mon_\Ccat$ to the category of $V$-modules and a morphism $\spec V \to \spec W$ to the functor $\trait \otimes_V W\colon V\trait\Mod\to W\trait\Mod$,  one may introduce the notions of ``Zariski cover" and ``flat cover''
(``$M$-faithfully flat in To\"en and Vaqui\'e's terminology; see Definition \ref{definitionsofcovers} and Remark \ref{mainremarkonTV} below) and use such notions to equip $\aff_\Ccat$ with two distinct  Grothendieck topologies, called, respectively, the flat and the Zariski topology. These topologies determine two categories of sheave on $\aff_\Ccat$, namely
$\sh^{\text{flat}}(\aff_\Ccat)\subset \sh^{\text{Zar}}(\aff_\Ccat) \subset \psh (\aff_\Ccat)$. At this point, mimicking what is done in classical algebraic geometry, a ``scheme over $\Ccat$'' is defined as a sheaf  in $\sh^{\text{Zar}}(\aff_\Ccat)$ that admits an affine Zariski cover (see Definition ~\ref{definitionsofcoversforsheaves} and  Definition \ref{defschemeoverC} below).
If we take as $\Ccat$ the category $\Set$ of sets endowed with the monoidal structure induced by the Cartesian product,
then the category $\aff_\Set$ is nothing but the category $\Mon^\text{op}$ and the objets of the category $\sch_\Set$ can be thought of --- as remarked by To\"en and Vaqui\'e --- as ``schemes over $\fun$''.  Actually, as proven by Alberto Vezzani in \cite{Vezz}, such schemes, that we shall call {\em monoidal schemes}, turn out to be equivalent to Deitmar's schemes.

Deitmar's schemes appear therefore to constitute the very core of  $\fun$-geometry, not just because their definition is rooted in the basic notion of prime spectrum of a monoid, but especially because they naturally fit into the categorical framework established by  To\"en and Vaqui\'e in \cite{TV}, which admits generalizations in many directions (e.g.~towards a derived algebraic geometry over $\fun$). Nonetheless, they are affected by some intrinsic limitations, which are clearly revealed by a result proven by Deitmar himself in 2008 \cite[Thm.~4.1]{Dei08}:
\begin{thmnn}  {\em Let $X$ be a connected integral $\fun$-scheme of finite type.\!\footnote{A Deitmar's $\fun$-scheme X is said to be of finite type, if it has a finite covering by affine schemes $U_i = \spec M_i$ such that each $M_i$ is a finitely generated monoid. Deitmar proved in \cite{Dei06} that an $\fun$-scheme $X$ is of finite type if and only if $X_\Z$ is a $\Z$-scheme of finite type.} Then every irreducible component of $X_{\Cx}= X_\Z \otimes_\Z \Cx$ is a toric variety. The components of $X_{\Cx}$ are mutually isomorphic as toric varieties.}
\end{thmnn}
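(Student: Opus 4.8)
The plan is to reduce the statement to a local computation of monoid algebras and then glue. Since $X$ is connected and integral it has a unique generic point $\eta$, and as $X$ is of finite type its ``function monoid'' $\Oc_{X,\eta}$ has the form $F\cup\{0\}$ with $F$ a finitely generated abelian group; write $F\cong\Z^{r}\oplus T$, $T$ finite. Fix a finite affine cover $X=\bigcup_i\spec M_i$ with each $M_i$ finitely generated; integrality forces $A_i:=M_i\setminus\{0\}$ to be a finitely generated cancellative monoid with $A_i^{\mathrm{gp}}=F$ for every $i$ (all the localizations at $\eta$ must agree). Then $X_\Z$ is covered by the affine schemes $\spec\Z[A_i]$ and $X_\Cx$ by $U_i:=\spec\Cx[A_i]$, with $\Cx[A_i]=\Z[A_i]\otimes_\Z\Cx$.

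First I would treat a single chart. Because $\Cx$ contains all roots of unity, the group algebra splits as $\Cx[F]\cong\prod_{\chi\in\widehat T}\Cx[\Z^{r}]$ (sum over the characters of $T$), with orthogonal idempotents $e_\chi=\frac{1}{|T|}\sum_{t\in T}\chi(t)^{-1}x^{t}$; hence $\spec\Cx[F]$ is a disjoint union of $|T|$ copies of $(\Cx^{*})^{r}$, canonically labelled by $\widehat T$. Since $A_i$ is cancellative, $\Cx[A_i]\hookrightarrow\Cx[F]=\Cx[A_i][A_i^{-1}]$ is an injective localization, so $\Cx[A_i]$ is reduced and (being Noetherian) its minimal primes are exactly the contractions $\mathfrak p_\chi^{(i)}=\ker\bigl(\Cx[A_i]\to\Cx[F]\xrightarrow{e_\chi}\Cx[\Z^{r}]\bigr)$, $\chi\in\widehat T$. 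Writing $a=(\bar a,t_a)\in\Z^{r}\oplus T$, this composite sends $x^{a}\mapsto\chi(t_a)\,x^{\bar a}$, with image $\Cx[\bar A_i]$, where $\bar A_i:=\mathrm{im}(A_i\to\Z^{r})$. Thus each irreducible component of $U_i$ is isomorphic to $\spec\Cx[\bar A_i]$, which is an affine toric variety with dense torus $\spec\Cx[\bar A_i^{\mathrm{gp}}]=\spec\Cx[\Z^{r}]=(\Cx^{*})^{r}$.

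Next I would globalize. The localization $\spec\Cx[F]$ is a single open $U_\eta\subseteq X_\Cx$ lying densely inside every $U_i$, and $X_\Cx$ is Noetherian because $X$ is of finite type; hence the irreducible components of $X_\Cx$ correspond bijectively, via closure, to those of $U_\eta$, i.e.\ to $\widehat T$. This produces exactly $|T|$ components $Z_\chi$ with $Z_\chi\cap U_i=V(\mathfrak p_\chi^{(i)})\cong\spec\Cx[\bar A_i]$. It remains to check that these identifications are compatible with the change-of-chart maps. On an overlap the gluing isomorphism of the $U_i$ is induced by a monoid identification taking place inside $F$, so it commutes with the projection $F\to\Z^{r}$ and preserves the torsion part $t_a$; chasing $x^{\bar a}$ through $\Cx[\bar A_i]\xleftarrow{\ \sim\ }\Cx[A_i]/\mathfrak p_\chi^{(i)}\to\Cx[A_j]/\mathfrak p_\chi^{(j)}\xrightarrow{\ \sim\ }\Cx[\bar A_j]$ shows the twisting scalars $\chi(t_a)^{\pm1}$ cancel, so the induced gluing datum is $x^{\bar a}\mapsto x^{\bar a}$, independent of $\chi$. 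Therefore every $Z_\chi$ is isomorphic to the single scheme $\overline X_\Cx:=\bigcup_i\spec\Cx[\bar A_i]$, glued along $(\Cx^{*})^{r}$-equivariant morphisms and hence a toric variety; this gives both assertions at once.

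The step I expect to be the main obstacle is precisely this compatible globalization: one must track carefully how the torsion-twisting scalars $\chi(t_a)$ transform under the chart-change morphisms in order to conclude that the $|T|$ local components fit together into $|T|$ global components which are moreover pairwise isomorphic, and not merely locally so. A secondary caveat worth emphasizing is that ``toric variety'' has to be understood in the generalized, not-necessarily-normal sense here, since $\bar A_i$ need not be saturated in $\Z^{r}$ (one may instead pass to normalizations, which are honest affine toric varieties).
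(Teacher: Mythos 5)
This theorem is not proved in the paper at all: it is quoted verbatim from Deitmar's paper \cite{Dei08} (Thm.~4.1) as background motivation, so there is no internal proof to compare against. Your argument is correct and is, in substance, the argument of the cited source: fix the generic stalk $F\cup\{0\}$ with $F\cong\Z^{r}\oplus T$, split $\Cx[F]\cong\prod_{\chi\in\widehat T}\Cx[\Z^{r}]$ by the idempotents of the torsion part, observe that $\Cx[A_i]\hookrightarrow\Cx[F]$ is an injective localization so that the minimal primes of each chart are the contractions $\mathfrak p^{(i)}_\chi$ with quotient $\Cx[\bar A_i]$, and then use the fact that integrality makes every transition map the identity inside $F$, so the twisting scalars $\chi(t_a)$ cancel on overlaps and each global component $Z_\chi$ is identified, torus-equivariantly, with the lift of the torsion-free quotient monoid scheme $\bigcup_i\spec\Cx[\bar A_i]$; this yields both the toric structure and the mutual isomorphism at once. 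The two points you flag are exactly the right ones: the splitting $F\cong\Z^{r}\oplus T$ is non-canonical but harmless because you fix it once for the single group $F$ shared by all charts, and ``toric variety'' must be taken in the not-necessarily-normal sense (e.g.\ $\spec\Cx[t^{2},t^{3}]$ already shows normality can fail), which is also the convention under which Deitmar's statement is true.
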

Since every toric variety is the lift $X_{\Cx}$ of an $\fun$-scheme $X$, the previous theorem entails that integral $\fun$-schemes of finite type are essentially the same as toric varieties. Now, semisimple  algebraic groups are not  toric varieties, so it is apparent that Deitmar's $\fun$-schemes are too little flexible to implement Tits's conjectural program. 

A possible generalization of Deitmar's geometry over $\fun$ was proposed by Olivier Lorscheid, who introduced the notions of  ``blueprint'' and ``blue scheme'' \cite{Lor12}. The basic idea can be illustrated through the following example. The affine group scheme $(SL_{2})_ \Z$ over the integers is defined as
$$(SL_{2})_ \Z = \spec\bigl( \Z[T_1, T_2, T_3, T_4] / (T_1 T_4 - T_2 T_3 -1) \bigr)\,.
$$
As the relation $T_1 T_4 - T_2 T_3 =1$ does not make sense in the monoid  $\fun[T_1, T_2, T_3, T_4]$, any  naive attempt to adapt the previous definition to get a scheme over $\fun$  will necessarily be unsuccessful. The notion of ``blueprint'' just serves serves the purpose of getting rid of this difficulty:
\begin{defnn} {\em A {\em blueprint} is a pair $B=(R, A)$, where $R$ is a semiring  and $A$ is a multiplicative subset  of $R$ containing $0$ and $1$ and generating $R$ as a semiring. A blueprint morphism $f \colon B_1=(R_1, A_1) \to B_2=(R_2, A_2)$
is a semiring morphism $f\colon R_1 \to R_2$ such that $f (A_1) \subset A_2$.}
\end{defnn}
\noindent
The rationale behind this definition can be explained by considering the following situation: if one is given a monoid $A$ and some relation which does not makes sense in $A$ but becomes meaningful in the semiring $A\otimes_\fun \N$, then one can look at the blueprint $(A\otimes_\fun \N, A)$.   

In the same vein as Deitmar's approach, Lorscheid \cite{Lor12} associates to each blueprint $B$ its spectrum $\spec B$, which turns out to be a locally blueprinted space (i.e.~a topological space endowed with a sheaf of blueprints, such that 
all stalks have a unique maximal ideal).  An affine blue scheme is then defined as a locally blueprinted space that is isomorphic to the spectrum of a blueprint, and a blue scheme as a locally blueprinted space that has a covering by affine blue schemes. Deitmar's schemes over $\fun$ and classical schemes over $\Z$ are recovered as special cases of this definition.

\subsection{About the present paper}
A natural question arises: do blue schemes fit into To\"en and Vaqui\'e's framework? This problem was addressed by Lorscheid himself in his 2017 paper \cite{Lor16} and answered in the negative. Nonetheless, it is possible --- as already pointed out in \cite{Lor16} --- to define a category of schemes (here called {\it $\Bcat$-schemes}) relative (in To\"en and Vaqui\'e's sense) to the category of blueprints. Our first aim is to study these schemes by introducing the category of blueprint {\it in a  purely functorial way}, as the category of monoid objects in a closed, complete and cocomplete symmetric monoidal category 
$\Bcat$.

There is a natural adjunction  $\rho \dashv \sigma \colon \aff_{\Bcat} \to \aff_{\Set_\ast}$ between the category of affine $\Bcat$-schemes and that of affine monoidal schemes. However, since the functor $\rho$ is not continuous w.r.t.~the Zariski topology, this adjunction does not give rise to a geometric morphism between the corresponding category of schemes. This hurdle
may be sidestepped by introducing a larger category $\nbcat$ containing $\Bcat$ and by considering the category of those schemes in 
$\sch_{\nbcat}$ that admit a Zariski cover by affine $\Bcat$-schemes. Such schemes, by a slight abuse of language, will be called
{\it $\nbcat$-schemes}. It will be proved that the  adjunction $\rho \dashv \sigma$ above induce an adjunction
$\widehat\rho \dashv \widehat\sigma$ between the category of $\nbcat$-schemes and that of affine monoidal schemes.
Moreover, it will be shown that every $\nbcat$-scheme $\Sigma$ generates a pair $(\underline\Sigma, \Sigma_\Z)$, where $\underline\Sigma$ is a monoidal scheme and $\Sigma_\Z$ a classical scheme, together with a natural transformation 
$\Lambda\colon \Sigma_\Z\to \underline{\Sigma}\adjz$.

More in detail the present paper is organized as follows.

After briefly recalling in \S\  \ref{SectionTV} the fundamental notions of ``relative algebraic geometry'' and fixing 
our notation,\footnote{This overview is complemented by Appendix A, where we review some basic facts about fibered categories, pseudo-functors, and stacks.}
in \S\ \ref{SectionBschemes} we define the full subcategory $\Bcat$  of the category $\N[\trait]/\Mon_0$ (where the functor $\N[\trait]\colon \Set_\ast \to \Mon_0$ is left adjoint to the forgetful functor $\vert\trait\vert$ from the category $\Set_\ast$ of pointed sets to the category of monoids with ``absorbent object''; see \S\ \ref{Sectionnotation}), whose objects
$(X, \N[X] \to M)$ satisfy the conditions:
\begin{equation*}
\begin{array}{l}
\text{a) the morphism $\N[X] \to M$ is an epimorphism;}\\
\text{b) the composition\ }  
X\to \vert \N[X] \vert \to \vert M\vert\  \text{is a monomorphism.}
\end{array}
\end{equation*}
As proven in Theorem \ref{B-category}, the category $\Bcat$ --- which  corresponds to the category of pointed set endowed with a pre-addition structure introduced  in \cite[\S 4]{Lor16} --- carries a natural structure of symmetric monoidal category. Moreover, this structure is closed, complete, and cocomplete. So, the category $\Bcat$ possesses all the properties necessary to carry out To\"en and Vaqui\'e's program. 

It is quite straightforward to show (Proposition \ref{Bluep-category}) that the category $\bluep$ of monoid objects in $\Bcat$ coincides with the category of blueprints (this result was already stated, in equivalent terms, in \cite[Lemma 4.1]{Lor16}, but we provide a detailed and completely functorial proof). Thus, by applying To\"en and Vaqui\'e's formalism to the category  
$\Bcat$, we define {\em the category $\aff_\Bcat =  \bluep^{\text{op}}$ of affine $\Bcat$-schemes} and then  {\em the category $\sch_\Bcat$ of $\Bcat$-schemes}.

The core of our paper is \S\ \ref{sectionadjunctions}. The natural adjunction between the category $\Mon_0$ and the category $\Set_\ast$ gives rise 
to an adjunction 
$\xymatrix{
\aff_{\Mon_0}  \ar[r]_{\vert\trait\vert}  & \aff_{\Set_\ast}   \ar@/_1.1pc/[l]_{\trait\adj} } 
$ that factorizes as shown in the following diagram  
\begin{equation}\label{adjunctiondiagram0}
\xymatrix{
\aff_{\Mon_0}  \ar[r]^{\vert\trait\vert} \ar[d]^{G} & \aff_{\Set_\ast}   \ar@/_1.5pc/[l]_{\trait\adj} \ar[dl]^{\sigma} \\
\aff_{\Bcat} \ar@<1ex>[ur]^{\rho} \ar@/^0.8pc/[u]^{F}
}
\end{equation}
In Proposition \ref{Bschemesadjunctions} it is  proven that the functor $F$ in the diagram \ref{adjunctiondiagram0} is continuous w.r.t.~the Zariski topology and that the induced functor $\widehat{F}\colon \sh(\aff_{\Bcat}) \to \sh({\aff_{\Mon_0}})$ determines a functor $\widehat{F}\colon \sch_{\Bcat} \to \sch_{\Mon_0}$ between the category of 
$\Bcat$-schemes and that of semiring schemes. Similarly, in 
Proposition \ref{Bschemesadjunctions2} it is shown that the functor $\sigma \colon \aff_{\Set_\ast} \to \aff_{\Bcat}$ in the diagram \ref{adjunctiondiagram0} is continuous w.r.t.~the Zariski topology and that the induced functor $\widehat{\sigma} \colon \sh(\aff_{\Set_\ast}) \to \sh(\aff_{\Bcat})$ determines a functor  $\widehat{\sigma}\colon \sch_{\Set_\ast} \to \sch_{\Bcat}$
between the category of monoidal schemes and that of $\Bcat$-schemes.  One would like the functor $\widehat\sigma$ to have a left adjoint determined by the functor $\rho\colon   \aff_\Bcat \to \aff_{\Set_\ast}$ (see diagram \ref{adjunctiondiagram0}). However, the functor $\rho$, although it preserves Zariski covers, does not commute with finite limits. This difficulty may be overcome by introducing the categories $\nbcat$ and $\nbluep$ containing, respectively, $\Bcat$ and $\bluep$ (Definition \ref{def-nbluep}), and by defining the category $\widetilde{\sch_{\nbcat}}$ of {\it $\nbcat$-schemes} as the subcategory of $\sch_{\nbcat}$ whose objects admit a Zariski cover by affine schemes in $\aff_\Bcat$ (Definition \ref{def-nbschemes}). So, a $\nbcat$-scheme is locally described by blueprints. In this way, one shows  (Theorem \ref{rhosigmaadjunction}) that  there is a geometric morphism 
$$\widehat\rho \dashv \widehat\sigma\colon \widetilde{\sch_{\nbcat}} \to \sch_{\Set_\ast}\,.$$

It follows (see Definition \ref{defBscheme} and the ensuing remarks) that each $\nbcat$-scheme $\Sigma$ determines the following geometric data: 
\begin{itemize}
\item a monoidal scheme $\underline{\Sigma}= \widehat{\rho}(\Sigma)$;
\item a scheme $\Sigma_\Z = \widehat{F}_\Z(\Sigma) $ over $\Z$; 
\item a natural transformation 
$\Lambda\colon \Sigma_\Z\to\underline{\Sigma}\circ |\trait |\cong\underline{\Sigma}\adjz$.
\end{itemize}

In \S \ \ref{SectionFinal2}, as an application of our approach, we investigate the relationship of 
$\nbcat$-schemes and  $\fun$-schemes in the sense of Alain Connes and Caterina Consani \cite{CC}. According to their definition \cite[Def.~4.7]{CC}, 
an $\fun$-scheme is a triple $(\underline{\Xi}, \Xi_\Z, \Phi)$, where
$\underline{\Xi}$ is a monoidal scheme,  $\Xi_\Z$ is a scheme over $\Z$, and 
$\Phi$ is natural transformation $\underline{\Xi}\to \Xi_\Z\circ (\trait\adjz)$, such that the induced natural transformation 
$\underline{\Xi}\circ \vert\trait\vert \to \Xi_\Z$, when evaluated on fields, gives isomorphisms (of sets).
Thus, the category of $\nbcat$-schemes and that of $\fun$-schemes can be combined into a larger category, namely their fibered product over the category of monoidal schemes, whose objects will be called {\em $\fun$-schemes with relations}
(Definition \ref{definitionF1schemewr}). In more explicit terms, a $\nbcat$-scheme
$\Sigma$ determining the pair $(\underline\Sigma, \Sigma_\Z)$ and an $\fun$-scheme 
$(\underline{\Sigma}, \Sigma_\Z', \Phi)$ will give rise to
 a ${\fun}$-scheme with relations denoted by the quadruple $(\underline{\Sigma}, \Sigma_\Z, \Sigma_\Z', \Phi)$.
 The main motivation behind this notion is to combine in a single geometric object both the advantages of blueprint approach 
and the benefits of Connnes and Consani's definition (cf.~Remark \ref{finalremark} for a better explanation).  
Each ${\fun}$-scheme with relations  $(\underline{\Sigma}, \Sigma_\Z, \Sigma_\Z', \Phi)$ (with a slight modification of our terminology, see Convention \ref{convention}) determines a natural transformation 
$$\Psi_1 \colon \Sigma_\Z  \to \Sigma_\Z'$$ 
and a  natural transformation 
\begin{equation*} \Psi_2\colon \Sigma'_{\Bcat}\to \Sigma'_\Z\,,
\end{equation*} 
where $\Sigma'_{\Bcat}$ is a certain pullback sheaf on the category $\Ring$ (defined by the diagram \ref{diagramdefiningSigma'Bcat}). This implies that, given a $\nbcat$-scheme $\Sigma$ underlying a ${\fun}$-scheme with relations, we can think of its ``${\mathbb F}_{1^{q-1}}$-points'' in two different senses, and therefore count them in two different ways, as stated in Proposition \ref{propfirsttransferringmap} and in Theorem \ref{thmsecondftransferringmap}.
An interesting case is  when the $\funn$-points of the underlying monoidal scheme $\underline\Sigma$ are counted 
by a polynomial in $n$. Theorem 4.10 of  \cite{CC} shows that, if $(\underline{\Sigma}, \Sigma_\Z', \Phi)$ is an $\fun$-scheme such that the monoidal scheme $\underline{\Sigma}$ is noetherian and torsion-free, then  $\#\underline\Sigma(\funn) = P(\underline\Sigma, n)$, where 
\begin{equation*} 
P(\underline\Sigma, n) = 
\sum_{x\in  {\underline\Sigma}} \#\Hom (\Oc_{\underline\Sigma, x}^\times, \funn)\,.
\end{equation*}
For an $\fun$-scheme with relations $(\underline{\Sigma}, \Sigma_\Z, \Sigma_\Z', \Phi)$    such that
the underlying $\nbcat$-scheme $\Sigma$ is noetherian and torsion-free (Definition \ref{definitionBschemenoethtorsionfree}), we 
introduce the polynomial
$$Q(\underline\Sigma, n) = \sum_{x\in \underline\Sigma}\# \Hom_{\Bcat} (\Oc_{\Sigma, x}^\times, \funn)\,,$$
and prove (Proposition \ref{finalproposition}) that $Q(\underline\Sigma, n) \leq P(\underline\Sigma, n)$.

Finally, we would like to emphasize that our approach to blueprints, being entirely functorial, seems to be  appropriate to 
carry out a ``derived version'' of the category of $\Bcat$-schemes. In fact, in quite general terms, a definition of ``derived $\Bcat$-scheme'' could be obtained by replacing, in our definition of $\Bcat$-scheme,  the category $\Set$ (resp.~$\Set_\ast$) by the  category $\mathsf S$ of spaces (resp.~$\mathsf S_\ast$ of pointed spaces) and the notion of monoid object by that  of $\mathbb E_\infty$-algebra. This issue will be the object of future work.

\smallskip\noindent
{\bf Acknowledgments.}   We would like to thank  an anonymous referee for pointing out a couple of mistakes in 
a previous version of this paper and for making helpful remarks.


\section{The general setting}\label{SectionTV}
\subsection{Schemes over a monoidal category}\label{relativeschemes}
For the reader's convenience, we start by giving a quick r\'esum\'e of some of the basic constructions of the 
``relative algebraic geometry'' developed in \cite[\S 2]{TV}.

Let $\Ccat =(\Ccat,\otimes, \mathbf{1})$ be a symmetric monoidal category ($\mathbf{1}$ is the unit object), and denote by 
$\mon_\Ccat$ the category of commutative (associative and unitary) monoid objects in $\Ccat$.

We assume that $\Ccat$ is {\em complete, cocomplete, and closed} (i.e., for every pair of objects $X$, $Y$,  
the contravariant functor $\Hom_{\Ccat}(\trait \otimes X, Y)$ is represented by an ``internal hom'' set $\ihom(X,Y)$).

The assumptions on $\Ccat$ imply, in particular, that the forgetful functor
\begin{equation*} \vert \trait \vert\colon \mon_\Ccat \to \Ccat
\end{equation*}
admits a left adjont
\begin{equation}\label{generalleftadjoint} L\colon \Ccat \to \mon_\Ccat\,,
\end{equation}
which maps an object $X$ to the free commutative monoid object $L(X)$ generated by $X$.

For each commutative monoid $V$ in $\mon_\Ccat$ one may introduce the notion of $V$-module  (cf.~\cite[p.~478]{Jo}).
The category  $V\trait\Mod$ of such objects has a natural symmetric monoidal structure given by the ``tensor product'' $\otimes_V$; this structure turns out to be closed. Given a morphism $V \to W$ in  $\mon_\Ccat$, there is a change of basis functor 
$$\trait \otimes_V W \colon V\trait\Mod \to W\trait\Mod\,,$$
whose adjoint is the forgetful functor $W\trait\Mod \to V\trait\Mod$. Note that the category of commutative monoids in $V\trait\Mod$ --- i.e.~the category of {\em commutative $V$-algebras} --- is naturally equivalent to the category $V/{\mon_\Ccat}$.

The category  $\aff_\Ccat$ of {\em affine schemes over $\Ccat$} is, by definition, the category  $\mon^{\text{op}}_\Ccat$. Given an object $V$ in  $\mon_\Ccat$ the corresponding object in $\aff_\Ccat$ will be denoted by $\spec V$. 

To define, in full generality, the category of schemes over $\Ccat$ one follows the standard procedure of glueing together  affine schemes. To this end, one first endows $\aff_\Ccat$ with a suitable Grothendieck topology. Let us recall the general definition.
\begin{definition}\label{topology}
Let $\Gcat$ be any category. A {\it Grothendieck topology} on $\Gcat$ is the assignment to each object $U$ of $\Gcat$ of a collection of sets of arrows $\{U_{i} \rightarrow U \}$ called {\it coverings of} $U$ so that the following conditions are satisfied:
\begin{enumerate}
\item[i)] if $V \rightarrow U$ is an isomorphism, then the set $\{ V\rightarrow U\}$ is a covering;
\item[ii)] if $\{U_{i} \rightarrow U\}$ is a covering and $V\rightarrow U$ is any arrow, then there exist the fibered products $\{U_{i}\times_{U}V\}$ and the collection of projections $\{U_{i}\times_{U}V\rightarrow V \}$ is a covering;
\item[iii)] if $\{ U_{i} \rightarrow U\}$ is a covering and for each index $i$ there is a covering $\{ V_{ij} \rightarrow U_{i}\}$ (where $j$ varies in a set depending on $i$), each collection $\{ V_{ij} \rightarrow U_{i}\rightarrow U\}$ is a covering of $U$. 
\end{enumerate}
A category with a Grothendieck topology is a called a {\it site}.
\end{definition}
\begin{rmark}
As it is clear from the definition above, a Grothendieck topology on a category $\Gcat$ is introduced with the aim of glueing  objects locally defined, and what really matters is therefore the notion of covering. So, in spite of its name, a Grothendieck topology could better be thought of  as a generalization of the notion of covering rather than of the notion of topology (notice, for example, that, though the maps $U_i\to U$ in a covering can be seen as a generalization of open inclusions $U_i\subset U$, no condition generalizing the topological requirement about unions of open subsets is prescribed).
\end{rmark}
Given a site $\Gcat$ and a covering $\mathcal{U}=\{ U_i\to U\}_{i\in I}$, we denote by $h_U$ the presheaf represented by $U$ and by $h_\mathcal{U}\subset h_U$ the subpresheaf of those maps that factorise through some element of $\mathcal{U}$.
\begin{definition}\label{sheaf}
Let $\Gcat$ be a site. A presheaf $F\colon\Gcat^{\text{\rm op}} \to\Set$ is said to be a sheaf if, for every covering $\mathcal{U}=\{ U_i\to U\}_{i\in I}$, the restriction map $\Hom (h_U,F)\to\Hom (h_\mathcal{U},F)$ is an isomorphism.
\end{definition}
Coming back to our symmetric monoidal category $\Ccat$, the associated category of affine schemes $\aff_\Ccat$ can be equipped with two different Grothendieck topologies by means of the following ingenious definitions (which, of course, generalize the corresponding  usual definitions   
 in ``classical'' algebraic geometry).

One says \cite[Def.~2.9, 1), 2), 3)]{TV} that a morphism $f\colon \spec W \to \spec V$ in  $\aff_\Ccat$ is
\begin{itemize}
\item {\em flat} if the functor $\trait \otimes_V W \colon V\trait\Mod \to W\trait\Mod$ is exact;
\item {\em an epimorphism} if, for any $Z$ in $\mon_\Ccat$, the functor 
$$f^\ast\colon \Hom_ {\mon_\Ccat}(W, Z) \to \Hom_ {\mon_\Ccat}(V, Z)$$ 
is injective ;
\item {\em of finite presentation} if, for any filtrant diagram $\{Z_i\}_{i\in I}$ in $V/{\mon_\Ccat}$, the natural morphism
$$\dirlim \Hom_{V/{\mon_\Ccat}}(W, Z_i) \to \Hom_{V/{\mon_\Ccat}}(W, \dirlim Z_i) $$
is an isomorphism.
\end{itemize}
\begin{definition}\label{definitionsofcovers} \cite[Def.~2.9, 4); Def.~2.10]{TV} a) A collection of morphisms 
$$\{ f_j\colon \spec W_j \to \spec V\}_{j\in J}$$ 
in 
$\aff_\Ccat$ is a flat cover if 
\begin{enumerate}
\item[i)] each morphism $f_j\colon \spec W_j \to \spec V$ is flat and
\item[ii)] there exists a finite subset of indices $J'\subset J$ such that the functor
$$\prod_{j\in J'} \trait \otimes_V W_j \colon V\trait\Mod \to \prod_{j\in J'}W_j \trait\Mod$$
is conservative.
\end{enumerate}
(b) A morphism $f\colon \spec W \to \spec V$ in  $\aff_\Ccat$ is an open Zariski immersion if it is a flat epimorphism of finite presentation.

\noindent
(c) A collection of morphisms $\{ f_j\colon \spec W_j \to \spec V\}_{j\in J}$ in 
$\aff_\Ccat$ is a Zariski cover if it is a flat cover and each $f_j\colon \spec W_j \to \spec V$ is an open Zariski immersion.
\end{definition}

\begin{rmark}\label{mainremarkonTV} The previous definition is actually a particular case of a more general  construction. Indeed, as shown in \cite{TV}, to define a topology on a complete and cocomplete category $\Dcat$ is enough to assign a pseudo-functor
$M\colon \Dcat^{\text{op}} \to \mathsf{Cat}$ satisfying the the following conditions:
\begin{enumerate}
\item[i)] for each morphism $q\colon X \to Y$ in $\Dcat$, the functor $M(q) = q^\ast\colon M(Y) \to M(X)$ has a right adjoint $q_\ast \colon M(X) \to M(Y)$ which is conservative
\item[ii)] for each Cartesian diagram
$$\xymatrix{ X' \ar[d]_r \ar[r]^{q'} & Y'\ar[d]^{r'} \\
X \ar[r]_q & Y}
$$
in $\Dcat$, the natural transformation $ q^\ast r'_\ast \Longrightarrow r_\ast q'^\ast$ is an isomorphism.
\end{enumerate}
In terms of such a functor one can define the notion of $M$-faithfully flat cover \cite[Def.~2.3]{TV} and the associated pretopology \cite[Prop.~2.4]{TV}, which induces a topology on $\Dcat$.

In the classical theory of schemes,  $\Dcat$ is the category $\Ring^{\text{op}}$ of affine schemes and, for each $X=\spec A$, $M(A)$ is the category of quasi-coherent sheaves on $X$. When starting with a monoidal category $\Ccat$ satisfying our assumptions, $\Dcat$ is the category $\aff_\Ccat$ and the pseudo-functor $M$ maps an object $V$ in $\mon_\Ccat$
to the category of $V$-modules and a morphism $\spec V \to \spec W$ to the functor $\trait \otimes_V W\colon V\trait\Mod\to W\trait\Mod$. What we have  called ``flat cover'' correspond to To\"en-Vaqui\'e's ``$M$-faithfully flat cover''  (cf.~\cite[Def.~2.8, Def.~2.10]{TV}).\\
When $\Dcat$ is endowed with a topology, a natural question that arises is how the pseudo-functor $M$ behaves with respect to it. It can be proven (\cite[Th. 2.5]{TV} that $M$ is a stack with respect to that topology (for the reader's convenience,  the notion of stack will be briefly reviewed in Appendix A).
\end{rmark}

By making use of flat covers and Zariski covers introduced in Definition \ref{definitionsofcovers} we may equip the category $\aff_\Ccat$ with two distinct Grothendieck topologies,
called, respectively, the {\em flat} and the {\em Zariski} topology. Correspondingly, there are two categories of sheaves on 
$\aff_\Ccat$, namely
$$\sh^{\text{flat}}(\aff_\Ccat) \subset  \sh^{\text{Zar}}(\aff_\Ccat)\subset \psh (\aff_\Ccat)\,.$$
Notice that, for each affine scheme $\Xi$,  the presheaf $Y(\Xi)$ given by the Yoneda embedding $Y(\trait)\colon \aff_\Ccat \to  \psh (\aff_\Ccat)$ is actually a sheaf in $\sh^{\text{flat}}(\aff_\Ccat) \subset  \sh^{\text{Zar}}(\aff_\Ccat)$ \cite[Cor.~2.11, 1)]{TV}; this sheaf will be denoted  again by $\Xi$.

The next and final step is to define the category of schemes over the category $\Ccat$. We first have to introduce the notion of  affine Zariski  cover in the category $\sh^{\text{Zar}}(\aff_\Ccat)$.
\begin{definition}\label{definitionsofcoversforsheaves} \cite[Def.~2.12]{TV} a) Let $\Xi$ be an affine scheme in $\aff_\Ccat$. A subsheaf $\cF\subset \Xi$
is said to be a Zariski open of $\Xi$ if there exists a collection of open Zariski immersions $\{\Xi_i \to \Xi\}_{i\in I}$ such that $\cF$ is the image of the sheaf morphism $\coprod_{i\in I} \Xi_i \to \Xi$.

\noindent
(b) A morphism $\cF \to \cG$ in  $\sh^{\text{Zar}}(\aff_\Ccat)$ is said to be an open Zariski immersion if, for any affine scheme $\Xi$ and any sheaf morphism $\Xi \to \cG$, the induced morphism 
$\cF \times_{\cG} \Xi \to \Xi$ is a monomorphism whose image is a Zariski open of  $\Xi$.

(c) Let $\cF$ be a sheaf  in $\sh^{\text{Zar}}(\aff_\Ccat)$. A collection of open Zariski immersions 
$\{\Xi_i \to \cF\}_{i \in I}$, where each $\Xi_i$ is an affine scheme over $\aff_\Ccat$, is said to be an affine Zariski cover of $\cF$ if the resulting
morphism 
$$\coprod_{i\in I} \Xi_i \to \cF$$
is a sheaf epimorphism.
\end{definition}  
It should be noted that, in the case of affine schemes over $\Ccat$, the definition of open Zariski immersion in Definition ~\ref{definitionsofcoversforsheaves}, (b) does coincide with that previously introduced in Definition ~\ref{definitionsofcovers}, (b) \cite[Lemma 2.14]{TV}.

\begin{definition}\label{defschemeoverC}
A scheme over the category $\Ccat$ is a sheaf $\cF$ in $\sh^{\text{Zar}}(\aff_\Ccat)$ that admits an affine 
Zariski cover.
The category of schemes over $\Ccat$ will be denoted by $\sch_\Ccat$.
\end{definition}

\subsection{Notation and examples}\label{Sectionnotation}
            
 Primarily to the purpose of fixing our notational conventions, we now briefly describe
 the basic examples of symmetric monoidal categories we shall work with in the sequel of the present paper.

\noindent
\textbullet\ The  category  $\Set$ of sets can be endowed with a monoidal product given by the Cartesian product. Then $(\mathsf{Set}, \times, \ast)$  is  a symmetric monoidal  category and  $\mon_\Set = \Mon$ is the usual category of commutative, associative and unitary monoids.

\noindent
\textbullet\  The category $\Set_\ast$ of pointed sets can be endowed with a monoidal product given by the smash product $\wedge$; in this case, the unit object is the pointed set $\mathbb{S}^0$ consisting of two elements. Then $(\Set_\ast, \wedge, \mathbb{S}^0)$ is a symmetric monoidal  category and  $\mon_{\Set_\ast} = \Mon_0$ is  the category of commutative, associative and unitary monoids with ``absorbent object'' (such an object will be denoted by $0$ in multiplicative notation and by $-\infty$ in additive notation).

\noindent
\textbullet\ The category $\Mon$ can be endowed with a monoidal product $\otimes$ defined in the following way:  $R\otimes R'$ is the quotient of the product $R\times R'$ by the relation $\mathcal{\sim}$ such that $(nr,r')\sim (r,nr')$ for each $(n,r,r')\in\mathbb{N}\times R\times R'$. Clearly, the unit object is the additive monoid $(\N, +)$. 
Then $(\Mon, \otimes, \N)$ is a symmetric monoidal  category and $\mon_\Mon= \SRing$ is the  category of commutative, associative and unitary semirings. 

\noindent
\textbullet\ The category $\Ab = \Z\trait \Mod$ of Abelian groups  can be endowed with a monoidal product $\otimes_\Z$ given by the usual tensor product of $\Z$-modules. Then  $(\Ab, \otimes_\Z, \Z)$ is a symmetric monoidal  category and $\mon_\Ab= \Ring$ is the  category of commutative, associative and unitary rings.

For the functor $L\colon \Ccat \to\mon_\Ccat $ defined in eq.~\ref{generalleftadjoint} as left adjoint to the forgetful functor
$\vert \trait \vert\colon \mon_\Ccat \to \Ccat$ we shall adopt the following special conventions:

%

\noindent
\textbullet\  if $\Ccat= \Set$, $L$ 
will be denoted by
\begin{equation} \N [\trait] \colon \Set \to \Mon\,;
\end{equation}

\noindent
\textbullet\  if $\Ccat= \Mon$, $L$
will be denoted by
\begin{equation} \trait \otimes_{{\mathbb U}} \N   \colon \Mon \to \SRing\,,
\end{equation}
where ${\mathbb U}$ is the monoid consisting of just one element (the notation being motivated by the identity
${\mathbb U} \otimes_{{\mathbb U}} \N = \N$);

\noindent
\textbullet\  if $\Ccat= \Mon_0$, $L$
will be denoted by 
\begin{equation} \label{adjointMon_0}\trait \otimes_{\fun}  \N  \colon \Mon_0 \to \SRing\,,
\end{equation}
where $\fun$ is the object of $\Mon_0$ consisting of two element, namely $\fun=\{0,1\}$ in multiplicative notation (also in this case,  the notation is motivated by the identity
$\fun \otimes_{\fun} \N = \N$); 

\noindent
\textbullet\  if $\Ccat= \Ab$, $L$
will be denoted by
\begin{equation} \Z [\trait] \colon \Ab \to \Ring\,.
\end{equation}

\smallskip

All symmetric monoidal categories $\Set$, $\Set_\ast$, $\Mon$, $\Mon_0$, $\Ab$ described above are complete, cocomplete, and closed, so we can apply the machinery of  To\"en-Vaqui\'e's theory illustrated in Subsection \ref{relativeschemes} and define, for each of these categories, the corresponding category of schemes over it. In this way, when $\Ccat= \Ab$, one unsurprisingly recovers the usual notion of {\em classical scheme}. A more intriguing example is provided by the case of
$\Ccat= \Set$.
%
%
%
%
%
%

\begin{ex} {\bf Monoidal schemes} An object of the category $\sch_\Set$ is a ``scheme over $\fun$'' in the sense of \cite{Dei}. The equivalence between the two definitions was proved in \cite{Vezz}. We recall that, if $M$ is a commutative monoid, its ``spectrum over $\fun$'' $\spec M$ can be realized as the set of prime ideals of $M$ and given a topological space
structure. 

In the present paper we shall call an object in $\sch_\Set$  a {\em monoidal scheme} and use the name of  ``$\fun$-scheme'' for a different kind of algebro-geometric structures (see Definition ~\ref{CCschemes}).
\end{ex}


\section{The category of blueprints}\label{SectionBschemes}

The notion of {\em blueprint} was introduced by Olivier Lorscheid in his 2012 paper \cite{Lor12}. 

\begin{definition}\label{def-blueprints}  A {\em blueprint} is a pair $B=(R, A)$, where $R$ is a semiring  and $A$ is a multiplicative subset  of $R$ containing $0$ and $1$ and generating $R$ as a semiring.
A blueprint morphism $f \colon B_1=(R_1, A_1) \to B_2=(R_2, A_2)$
is a semiring morphism $f\colon R_1 \to R_2$ such that $f (A_1) \subset A_2$. 
\end{definition}
Notice that, given a blueprint morphism  $f \colon B_1=(R_1, A_1) \to B_2=(R_2, A_2)$, its restriction $f\vert_{A_1}\colon A_1 \to A_2$ is a monoid morphism that uniquely determines $f$ on the whole of $R_1$. 


The idea underlying the notion of blueprint can be illustrated as follows. Some equivalence relations that do not make sense in a monoid $A$ may be expressed in the semiring $A\adj $. Now, any equivalence relation $\mathcal{R}$ on a semiring $S$ induces a projection 
$S\to S/\mathcal{R}$ and can indeed be recovered by such a map. So, the assignment of a pair $(A,A\adj \to R)$ is to be interpreted as the datum of a monoid $A$ plus the relation on $A\adj$ given by the epimorphism $A\adj \to R$.

\begin{ex}\label{example1} 
Consider the monoid $A_T =\mathbb{N}\cup\{ -\infty\}$ (in additive notation, corresponding to $\{ T^i\}_{i\in\N\cup\{ -\infty\}}$ in multiplicative notation) and the corresponding free semiring $A_T\adj$ of polynomials in $T$ with coefficient in $\N$ (the functor $\trait \adj$ has been introduced in eq.~\ref{adjointMon_0}). Notice that $\spec A_T$ has two points, namely the prime ideals $\{ -\infty\}$ and $(\N\setminus \{0\})\cup  \{ -\infty\}$, which embed in $\spec A_T\adj $ (we are loosely thinking of $\spec A_T\adj$ as the underlying topological space).\\
Now, if one takes a closed subset of $\spec A_T\adj $ and intersects it with $\spec A_T$, one could naively think that the intersection is nonempty only when the chosen closed subset is defined by some relation in $A_T$. However, this is not the case: for instance, the relation $2T=1$, which makes the ideal $(T)$ trivial,  cannot be expressed in the monoid $A_T$. According to Lorscheid's idea,  one  can represent this affine ``monoidal scheme'' by considering the pair $(A_T, A_T\adj \to A_T\adj /(2T=1))$.
\end{ex}

The category of blueprints can be given a handier description, which makes it easier to characterise it as the category of commutative monoids in a suitable symmetric monoidal category.  

Let us consider the functor $\trait \adj \colon \Mon_0 \to \SRing$ (introduced in eq.~\ref{adjointMon_0}) 
\begin{definition}\label{def-bluep} 
The category $\bluep$ is the full subcategory of  
$\trait \adj /\SRing$  whose objects 
$(A, A\adj \to R)$ 
satisfy the conditions:
\begin{equation}\label{Blueprintsconditions}
\begin{array}{l}
\text{a) the morphism $A\adj \to R$ is an epimorphism;}\\
\text{b) the composition\ }  
A\to \vert A\adj  \vert \to \vert R\vert \text{, is a monomorphism} \\ \text{\phantom{b)}(the first map being the unit of the adjunction).} 
\end{array}
\end{equation}
\end{definition}
It is immediate that  the category $\bluep$ is equivalent to the category of blueprints introduced in Definition \ref{def-blueprints}
 

Consider now the forgetful functor $\vert \trait \vert \colon \Mon_0 \to \Set_\ast$; for each monoid $M$ with absorbent object $0$ (in multiplicative notation), the base point of the associated set $\vert M\vert$ is clearly the element corresponding to $0$. Its adjoint functor is the functor 
\begin{equation*}
\N[\trait] \colon \Set_\ast \to\Mon_0\,.
\end{equation*}
We can now form the full subcategory $\Bcat$ of $\N[\trait] /\Mon_0$ whose objects $(X, \N[X] \to M)$ are described by conditions formally identical to those in eq.~\ref{Blueprintsconditions}
\begin{equation}\label{Bconditions}
\begin{array}{l}
\text{a) the morphism $\N[X] \to M$ is an epimorphism;}\\
\text{b) the composition\ }  
X\to \vert \N[X] \vert \to \vert M\vert\  \text{is a monomorphism.}
\end{array}
\end{equation}

\begin{rmark} The category $\Bcat$ above corresponds to the category of pointed set endowed with a pre-addition structure, as described 
in \cite[\S 4]{Lor16}. 
\end{rmark}

\begin{thm}\label{B-category} The category $\Bcat$ carries a natural structure of symmetric monoidal category. Moreover, this structure is closed, complete, and cocomplete.
\end{thm}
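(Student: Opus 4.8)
The plan is to realize $\Bcat$ as a reflective (or coreflective) subcategory of a category whose monoidal, closed, complete, and cocomplete structure is easy to establish directly, and then transport what we can along the reflection, patching up the monoidal product by hand. The natural ambient category is $\N[\trait]/\Mon_0$, the slice of $\Mon_0$ under the free-monoid functor; equivalently — and this will be the more useful picture — an object of $\Bcat$ is a pointed set $X$ together with a "pre-addition", i.e. the data of the quotient semiring $M$ of $\N[X]$, subject to (a) surjectivity of $\N[X]\to M$ and (b) injectivity of $X\to |M|$. Concretely, conditions (a) and (b) say that $M$ is generated as a semiring by the image of $X$, and that $X$ sits inside $|M|$; so the pair is nothing but a sub-pointed-set $A\subset |R|$ of a semiring $R$ which generates $R$ and contains $0,1$ — exactly the blueprint data, in agreement with the remark preceding the theorem and with \cite[\S 4]{Lor16}.

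First I would nail down the monoidal product. Given $(X,\N[X]\to M)$ and $(Y,\N[Y]\to N)$, the multiplicative monoid underlying the tensor must be the smash product $X\wedge Y$ in $\Set_\ast$ (this is forced: monoids in $\Bcat$ should be blueprints, whose underlying monoid is the monoid of the two underlying monoids, and $\Mon_0$-tensor on free objects is $\N[X\wedge Y]$). The semiring component is then $M\otimes_{\fun}N$ — more precisely the image of $\N[X\wedge Y]=\N[X]\otimes N[Y]\twoheadrightarrow M\otimes N$, where $\otimes$ is the coproduct of semirings; one checks this image satisfies (a) by construction and (b) because $X\wedge Y\hookrightarrow |M\otimes N|$ (monomials in generators survive since over each factor $X\hookrightarrow|M|$, $Y\hookrightarrow|N|$, and a basis argument for $M\otimes_{\N}N$ as $\N$-semimodules shows the pure tensors of basis elements are distinct). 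The unit is $\fun=(\mathbb S^0,\N[\mathbb S^0]\to\N)$. Symmetry, associativity, and the unit constraints are then inherited from those of $\wedge$ on $\Set_\ast$ together with those of $\otimes$ on $\SRing$, since all the structure maps are the obvious ones; this is a routine diagram chase, not a conceptual point.

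Next, completeness and cocompleteness. The forgetful functor $\Bcat\to\Set_\ast$, $(X,\ldots)\mapsto X$ (equivalently $A\subset|R|\mapsto A$) has both adjoints in sight: its left adjoint is $X\mapsto(X,\N[X]=\N[X])$ and its right adjoint sends $X$ to the "largest blueprint on $X$", which one can describe as $X$ equipped with the trivial pre-addition (the free semiring modulo only the relations forced by having $X$ as a pointed set). Limits: a limit in $\Bcat$ is computed by taking the limit $X=\varprojlim X_i$ in $\Set_\ast$ and equipping it with the semiring $R=$ image of $\N[X]$ in $\varprojlim R_i$; one verifies the universal property against cones, using that $\N[\trait]$ is a left adjoint and that epimorphism/monomorphism conditions (a),(b) are preserved by this construction (a subobject of a product of blueprints is a blueprint). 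Colimits: since $\Bcat$ is a full subcategory of $\N[\trait]/\Mon_0$ closed under the "bluification" reflection $(X,\N[X]\to M)\mapsto(\operatorname{im}(X\to|M|),\ \N[\operatorname{im}]\to\operatorname{im-generated subsemiring})$, and $\N[\trait]/\Mon_0$ is cocomplete (slice of a cocomplete category under a functor), colimits in $\Bcat$ are obtained by computing the colimit in the slice and then applying the reflector. I would verify the reflector is well-defined and left adjoint to the inclusion — this is the one genuinely substantive verification, as condition (b) (the monomorphism condition) is not automatically stable and must be repaired by replacing $X$ with its image in $|M|$.

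Finally, closedness. For blueprints $B=(A\subset|R|)$ and $C=(A'\subset|R'|)$ I would construct $\ihom(B,C)$ as follows: its underlying pointed set is $\Hom_{\Bcat}(B, ?\wedge C)$-representing data — concretely, take the set of blueprint morphisms $B\to C$ and, more generally, build the internal hom on the level of semirings as $\ihom_{\SRing}(R,R')$ cut down by the subset $A$-compatible maps, then bluify. The adjunction $\Hom_{\Bcat}(D\wedge B, C)\cong\Hom_{\Bcat}(D,\ihom(B,C))$ is checked by unwinding: a map $D\wedge B\to C$ is a pointed map $X_D\wedge A\to A'$ compatible with the pre-additions, which by the smash–hom adjunction in $\Set_\ast$ and the tensor–hom adjunction in $\SRing$ (both of which exist, those categories being closed) corresponds to a pointed map $X_D\to \underline{\Hom}(A,A')$ landing in the pre-addition-compatible part; bluifying gives the map to $\ihom(B,C)$. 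The main obstacle throughout is bookkeeping: one must repeatedly check that the generation condition (a) and especially the injectivity condition (b) survive the constructions, and the cleanest way to handle (b) is to always pass to the image inside the relevant semiring — i.e. to work with the explicit model "$A\subset|R|$" rather than abstractly in the slice. Once that discipline is fixed, every clause of the theorem reduces to combining the already-known closed complete cocomplete symmetric monoidal structures on $\Set_\ast$ and $\SRing$.
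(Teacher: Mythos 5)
Your overall skeleton does match the paper's proof: the tensor is built from the smash product on the first components and a tensor on the second components, limits and colimits are computed in the ambient categories and then repaired by taking images, and the internal hom is a fiber product of exponentials with an image pre-addition (the paper's formula is $\bigl(Y^X \times_{|N|^X} | N^{M}|,\ \N[Y^X \times_{|N|^X} | N^{M}|]  \to\widetilde{N^M}\bigr)$, which is what your "compatible pairs, then bluify" amounts to). However, there is a systematic misidentification running through your write-up: an object of $\Bcat$ is \emph{not} a blueprint. In $(X,\N[X]\to M)$ the second component $M$ is an object of $\Mon_0$ (a commutative monoid with absorbent element, encoding a pre-addition), not a semiring; $X$ carries no multiplication and there is no $1$. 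Blueprints arise only as the \emph{monoid objects} in $\Bcat$ (Proposition \ref{Bluep-category}), and the point of Theorem \ref{B-category} is precisely to equip $\Bcat$ with the monoidal structure that makes that statement meaningful. Consequently several of your constructions name the wrong objects: the tensor of the second components must be the monoidal product of $\Mon_0$ (not "the coproduct of semirings" $M\otimes_{\fun}N$), the unit is the free $\Mon_0$-object on $\mathbb S^0$, and the internal hom must use the $\Mon_0$-exponential $N^M$, not $\ihom_{\SRing}(R,R')$. The two settings are formally parallel, so the argument can be transcribed, but as written you are proving the theorem for a different category (essentially the semiring-level data of $\bluep$), not for $\Bcat$.

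There is also a step that fails as stated: the proposed "bluification" reflector, which replaces $X$ by its image in $|M|$ \emph{and} replaces $M$ by the submonoid generated by that image, is not left adjoint to the inclusion $\Bcat\hookrightarrow \N[\trait]/\Mon_0$, because the unit would require a morphism from $M$ into the generated subobject, and no such map exists when the surjectivity condition (a) fails. What actually holds (and is what the paper uses) is weaker and sufficient: for a diagram of $\Bcat$-objects the colimit computed in the slice automatically satisfies (a), since $\N[\trait]$ preserves colimits and each $\N[X_i]\to M_i$ is epi, so one only repairs condition (b) by replacing $\dirlim X_i$ with its image in $|\dirlim M_i|$, leaving $\dirlim M_i$ untouched; equivalently, $\Bcat$ is reflective in $\nbcat$ (where only (a) is imposed), not in the full slice, and the reflector touches only the first component. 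Your limit construction (keep $\invlim X_i$, take the image $\widetilde{\invlim M_i}$ on the monoid side) agrees with the paper's, and your injectivity check for the tensor is the same distinctness-of-pure-tensors claim the paper makes; but the adjunction verification for the internal hom and the universal-property checks for (co)limits are only sketched, whereas they constitute the bulk of the paper's proof.
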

\begin{proof}
In the category $\Bcat$ there is a natural symmetric monoidal product given by
\begin{equation}\label{monoidalproductB}
(X, \N[X]  \to M)\otimes (X', \N[X'] \to M')=(X\wedge X', \N[X\wedge X'] \to M\otimes M')\,,
\end{equation}
where the map
$ \N[X\wedge X'] \to M\otimes M'$
is the composition
$$
\N[X\wedge X']  \to \N[X]  \otimes \N[X']  \to M\otimes M'\,;
$$
the first morphism maps $n(x,x')$ to $n x\otimes x'$ and  is an isomorphism (in other words, the functor $\N[\trait]$   is  monoidal).

Since $M\otimes M'$ is generated as a monoid by elements of the form $x\otimes x'$, and since the two maps 
$\N[X] \to M$ and $\N[X'] \to M'$ are surjective, the map $\N[X\wedge X']  \to M\otimes M'$ is also surjective. Moreover, by the definition of tensor product in the category $\Mon$, for any $x,y\in X\setminus\{\ast\}$ and $x',y'\in X'\setminus\{\ast\}$ one has  $x\otimes x'=y\otimes y'$ if and only if $(x,x')=(y,y')$, so that the map
\[
X\wedge X'\to |M\otimes M'|
\]
is a monomorphism. Conditions \ref{Bconditions} are therefore satisfied.

We now show that the monoidal category $\Bcat$ is closed. Let us define the internal hom functor by setting
\begin{equation}\label{internalhomB}
\ihom( (X, \N[X] \to M), (Y, \N[Y] \to N)) = 
(Y^X \times_{|N|^X} | N^{M}|,  \N[Y^X \times_{|N|^X} | N^{M}|]  \to\widetilde{N^M})\,,
\end{equation}
where $\widetilde{N^M}$ is the image of the map
$$
\N [Y^X \times_{|N|^X} | N^{M}|]\to \N [|N^{M}|] \to N^M
$$
(the second map above is the counit of the adjunction). Let us check the adjunction property.
For each map
\begin{equation}\label{Bmap}
(X,\N[X]\to M)\otimes (Y,\N [Y]\to N)=(X\wedge Y, \N[X\wedge Y]\to M\otimes N)\to (Z,\N[Z]\to L)\,,
\end{equation}
the first component corresponds, by the exponential law in $\Set_\ast$, to a map $X\to Z^Y$, while 
the second component is given by a commutative square
\begin{equation}\label{diagraminternalHom}
\xymatrix{
\N[X\wedge Y] \ar[r] \ar[d] & \N[Z] \ar[d] \\
M\otimes N \ar[r] & L
}
\end{equation}
where the  arrow on the left  is the product map $\N[X]\otimes \N[Y]\to M\otimes N$ and the top arrow  is the image of  the map in the first component through the functor $\N[\trait]$. 
By using  the property that  $\N[\trait]$ is the left adjoint to the forgetful functor and by noticing that
the bottom arrow in \ref{diagraminternalHom} corresponds to a map $M\to L^N$, it is immediate that assigning  
the commutative diagram \ref{diagraminternalHom} 
is equivalent to assigning the two commutative diagrams
\[
\xymatrix{
X \ar[r] \ar[dr] & Z^Y \ar[d] \\
 & |L|^Y
} \qquad \quad
\xymatrix{
X \ar[d] \ar[dr] & \\
|M| \ar[r] & |L^N|
}
\]
together with the condition that the diagonal morphism of the first coincides with the composition of the diagonal morphism of the second  
and the morphisms $|L^N|\hookrightarrow |L|^{|N|}\to |L|^Y$ (the second map being induced by  
the map $Y\to |N|$).  
Summing up, a map as in eq.~\ref{Bmap} is equivalent to a map from $X$ to the pullback defined by the diagram
\begin{equation*}
\xymatrix{
& Z^Y \ar[d] \\
|L^N| \ar[r] & |L|^Y
}
\end{equation*}
along with a  compatible map $M\to L^N$ in such a way that the following diagram commutes:

\begin{equation*}
\xymatrix{
X \ar[r] \ar[d] \ar@/^1.5pc/[rr] & \vert L^N\vert \times_{\vert L\vert ^Y} Z^Y\ar[r] \ar[d] & Z^Y \ar[d]\\
|M| \ar[r] & \vert L^N\vert \ar[r] & \vert L\vert ^Y
}
\end{equation*}
This shows that the internal hom functor in eq.~\ref{internalhomB} is indeed a right adjoint to the monoidal product functor in eq.~\ref{monoidalproductB}. 

We wish now to show that the category $\Bcat$ is complete and cocomplete.  First we prove that it admits colimits. Given 
a diagram whose objects are $(X_i,\N [X_i]\to M_i)$,  we claim that its colimit is the object
\begin{equation*}
B=(\widetilde{\dirlim X_i},\N [\widetilde{\dirlim X_i}]\to\dirlim M_i)\,,
\end{equation*}
where $\widetilde{\dirlim X_i}$ denotes the image of the natural map $\dirlim X_i\to |\dirlim M_i|$; the maps from the diagram to $B$ are the obvious ones. 
It is immediate that $B$ is  an object of $\Bcat$. The injectivity condition is satisfied by definition. As for the surjectivity condition, one has that, since the functor $\N [\trait ]$ preserves colimits (being a left adjoint), the map $\N [\dirlim X_i]\to\dirlim M_i$ is surjective (by \cite{CWM}, Theorem V.2.1, it is enough to show that for arbitrary coproducts and coequalizers, in which cases it is a consequence of the surjectivity of the maps $\N [X_i]\to M_i$), so that the image of $\dirlim X_i$ generates $\dirlim M_i$; hence, the map $\widetilde{\N [\dirlim X_i]}\to\dirlim M_i$ is surjective.

Consider a map from the given diagram to an object $C$ of $\Bcat$. In the  category $\N [\trait]/\Mon_0$ such a map factorises in a unique way through the object $(\dirlim X_i,\N [\dirlim X_i]\to\dirlim M_i)$ because of the colimit properties  in the categories $\Set_\ast$ and $\Mon_0$ and because the functor $\N [\trait ]$ preserves colimits. If two elements 
  $x,y\in\dirlim X_i$ have the same image $m\in\dirlim M_i$, then their images in the first component of $C$ are mapped by the morphism in the second component to the same element. So, the images of $x$ and $y$ do coincide, just because $C$ is an object of $\Bcat$. It follows that the map from the diagram in $C$ uniquely factorises through $B$, so that our claim is proved. 

Second we prove that $\Bcat$ admits limits. Given a diagram as above, we claim  that its limit is the object
\begin{equation*}
B'=(\invlim X_i,\N [\invlim X_i]\to\widetilde{\invlim M_i})\,,
\end{equation*}
where $\widetilde{\invlim M_i}$ is the image of the natural map $\N [\invlim X_i]\to\invlim M_i$, which is adjoint to the map $\invlim X_i\to\invlim |M_i|\cong |\invlim M_i|$ (the last isomorphism holds since $|\trait|$ preserves limits, being a right adjoint) induced by the maps $X_i\to |M_i|$; the maps from $B'$ to the diagram are the obvious ones. 
It is clear that $B'$ is an object of $\Bcat$: the surjectivity condition holds by definition, while for the injectivity condition it is enough to note that it holds when the limit is either an arbitrary product or an equalizer (see \cite{CWM}, Theorem V.2.1). 
Consider now a map from an object $C$ to the given diagram. In the  category $\N [\trait ]/\Mon_0$ such a map uniquely factorises through the object $(\invlim X_i,\N [\invlim X_i ]\to\invlim M_i)$, because of the limit properties in the categories $\Set_\ast$ and $\Mon_0$. Since the second component of $C$ is a surjective morphism, this map uniquely factorises through $B'$. Thus, $B'$ satisfies the limit condition, as claimed. 
\end{proof}

\begin{prop}\label{Bluep-category} The category $\bluep$ of blueprints is equivalent to the category $\mon_\Bcat$ of monoids in the symmetric monoidal category
$\Bcat$.
\end{prop}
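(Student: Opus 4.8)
The plan is to unwind what a commutative monoid object in $(\Bcat,\otimes,\mathbf 1)$ amounts to and to match it with the data $(A,A\adj\to R)$ of a blueprint. First I would identify the unit object of $\Bcat$: from the formula \ref{monoidalproductB} together with the fact that $\mathbb S^0$ is the unit of $(\Set_\ast,\wedge)$ and $\N$ is the unit of $(\Mon,\otimes)$, the unit of $\Bcat$ is $\mathbf 1=(\mathbb S^0,\N[\mathbb S^0]\to\N)$, since $\N[\mathbb S^0]\cong\N$. Given an object $(X,\N[X]\to M)$ of $\Bcat$, a monoid structure consists of a multiplication morphism $m\colon(X,\N[X]\to M)\otimes(X,\N[X]\to M)\to(X,\N[X]\to M)$ and a unit $e\colon\mathbf 1\to(X,\N[X]\to M)$ in $\Bcat$, subject to the usual associativity, unit and commutativity axioms. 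By the description \ref{monoidalproductB} of $\otimes$, the morphism $m$ is a pair: a pointed map $X\wedge X\to X$ and a semiring-compatible map $M\otimes M\to M$ making the evident square commute, and likewise $e$ is a pair $(\mathbb S^0\to X,\ \N\to M)$. I would observe that, because the left component $X$ sits monomorphically inside $|M|$ (condition b in \ref{Bconditions}) and generates $M$ as a semiring through the epimorphism $\N[X]\to M$ (condition a), the semiring map $M\otimes M\to M$ already determines the pointed map $X\wedge X\to X$, and conversely the multiplication on $M$ is forced to restrict to a multiplication on the generating set $X\cup\{*\}$; so the monoid data in $\Bcat$ is \emph{equivalent} to the data of a commutative, associative, unital \emph{semiring} structure on $M$ together with a multiplicative submonoid $A\subseteq M$ — namely $A=$ image of $X$ (with the basepoint mapping to $0$ and the image of $\N\to M$ supplying $1$) — such that $A$ contains $0$ and $1$ and generates $M$ as a semiring. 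That is exactly the datum of a blueprint $B=(M,A)$, with $A\cong X$ via condition b.

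Next I would check the axioms transfer correctly: associativity and commutativity of $m$ in $\Bcat$ translate, via the second (semiring) component, precisely into associativity and commutativity of the multiplication on $M$ (and automatically on $A$, since $A$ is stable under this multiplication), and the unit axiom for $e$ says that the image of $\N\to M$ contains a two-sided identity $1$, which lies in $A$ because $1$ is the image of the nonbasepoint of $\mathbb S^0$ under $\mathbb S^0\to X$. The compatibility square defining $m$ as a morphism in $\Bcat$ is automatic once the semiring multiplication on $M$ is fixed, because $\N[-]$ is monoidal (as noted after \ref{monoidalproductB}) and $\N[X]\to M$ is epic. Conversely, starting from a blueprint $(R,A)$, I would set $X=A$ as a pointed set (basepoint $0$), let $\N[A]\to R$ be the canonical semiring map adjoint to the inclusion $A\hookrightarrow|R|$; condition a holds because $A$ generates $R$ as a semiring, condition b because $A\hookrightarrow R$ is injective and factors through $|\N[A]|$. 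The multiplication of $R$ then equips this object with a $\mon_\Bcat$-structure by the same dictionary.

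Then I would address morphisms: a morphism of monoid objects $(X,\N[X]\to M)\to(X',\N[X']\to M')$ in $\Bcat$ is a morphism in $\Bcat$ commuting with $m$ and $e$; on the second component this is exactly a semiring morphism $M\to M'$, and the commutation with the first-component maps forces it to send $A$ (the image of $X$) into $A'$ (the image of $X'$), i.e.\ it is precisely a blueprint morphism in the sense of Definition \ref{def-blueprints}. Conversely a blueprint morphism $f\colon(R_1,A_1)\to(R_2,A_2)$ restricts to $A_1\to A_2$ and, being a semiring morphism respecting the generating multiplicative submonoids, defines a morphism of the associated monoid objects. These two assignments are mutually inverse up to the canonical identifications $A\cong X$ and $\N[A]\to R$, giving the claimed equivalence of categories $\bluep\simeq\mon_\Bcat$.

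The main obstacle I expect is the bookkeeping in the first step: carefully checking that a monoid structure in $\Bcat$ does not carry strictly more information than a semiring-plus-generating-submonoid, i.e.\ that the pointed-set component of the multiplication is genuinely redundant and that the compatibility square imposes no further constraint. This rests squarely on conditions a) and b) of \ref{Bconditions} — epimorphy of $\N[X]\to M$ and monomorphy of $X\to|M|$ — so the argument is really a diagram chase showing that the forgetful functor $\mon_\Bcat\to\SRing$ (second component) is fully faithful onto its image and that its image consists exactly of the blueprints. Everything else (axioms, functoriality, mutual inverseness) is then formal, and indeed the statement is asserted in \cite[Lemma 4.1]{Lor16}; the value added here is the fully functorial verification, which amounts to unwinding \ref{monoidalproductB} and the adjunction $\N[\trait]\dashv|\trait|$ once and for all.
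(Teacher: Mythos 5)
Your proposal is correct and follows essentially the same route as the paper: both unwind the multiplication $\mu$ of a monoid object in $\Bcat$ into its two components, reading off a multiplicative monoid structure on $X$ (equivalently, the multiplicative subset $A$) from the first and a semiring structure on $M$ from the second, with the commutative square providing compatibility. Your additional observations --- the explicit unit object, the redundancy of the pointed-set component given the monomorphism $X\to\vert M\vert$, and the identification of morphisms of monoid objects with blueprint morphisms --- are correct refinements of details the paper leaves implicit (it relies on the earlier remark that a blueprint morphism is determined by its restriction to $A_1$), not a different argument.
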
 
\begin{proof}
To begin with, notice that, for each monoid object $((X,\N[X] \to M),\mu )$ in $\Bcat$, 
the domain of the multiplication map
\[
\mu\colon (X,\N[X] \to M)\otimes (X,\mathbb{N}[X]\to M)\to (X,\N[X]\to M)
\]
is defined in eq.~\ref{monoidalproductB} as
\[
(X,\N[X] \to M)\otimes (X,\mathbb{N}[X]\to M):=(X \wedge X, \N[X\wedge X]\to M\otimes M)\,.
\]
So the first component of $\mu$
is a map 
\[
m:X\wedge X\to X
\]
which defines a (multiplicative) monoid structure on the set $X$, while the second component of $\mu$ 
yields a commutative diagram 
\[
\xymatrix{
\N[X\wedge X] \ar[d] \ar[r]^{\N[m]} & \N[X] \ar[d] \\
M\otimes M \ar[r] & M
}
\]
whose bottom arrow induces an associative and commutative multiplication on the monoid $M$ compatible with its monoidal sum; in other words, it induces  a semiring structure on $M$.

Similarly, the top arrow induces a semiring structure one the monoid $\N[X]$. In this case, since the multiplication is given by the application of the free monoid functor $\mathbb{N}[\trait]$ to the multiplication $m$ of $X$, the resulting semiring is nothing but the free semiring $X \adj $ 
generated by the monoid $(X, m)$. The commutativity of the diagram ensures that the multiplication on $X$ is consistent with that on $M$, so that $X$ can still be seen as a subobject of $|M|$.

In conclusion, a monoid object in the category $\Bcat$ is a blueprint, and it is also obvious that any blueprint can be obtained this way.
\end{proof}

\begin{rmark} Theorem \ref{B-category} and Proposition \ref{Bluep-category} should hopefully provide a full elucidation of   \cite[Lemma 4.1]{Lor16}.
\end{rmark}

We have shown that the category of blueprints fits in with the general framework proposed by To\"en and Vaqui\'e,
so we can apply the formalism of Subsection \ref{relativeschemes} to define the category of schemes over $\Bcat$. 
\begin{definition}
An affine $\Bcat$-scheme is an object of the category $\aff_\Bcat = \bluep^{\text{op}}$, a $\Bcat$-scheme an object of the category $\sch_\Bcat$ {\em (see Definition \ref{defschemeoverC})}.
\end{definition}
\begin{rmark}A ``$\Bcat$-scheme'' corresponds to  what is called a ``subcanonical blue scheme'' in \cite{Lor16}.
\end{rmark}
%
%
%
%


\section{Adjunctions}\label{sectionadjunctions}

\subsection{$\Bcat$-schemes}\label{sectionBadjunctions}
This sections aims to show that the natural adjunction between the categories $\aff_{\Mon_0}$ and $\aff_{\Set_\ast}$ factorizes
through an adjunction between the categories $\aff_{\Mon_0}$ and $\aff_{\Bcat}$ and an adjunction between 
the categories $\aff_{\Set_\ast}$ and $\aff_{\Bcat}$, whose right adjoints induce functors between the corresponding categories of relative schemes. 
%
%
%

%
%
\begin{lemma}\label{blueforgetfulfunctorlemma}
The  functor   $\tilde{F}\colon \N[\trait]  / \Mon_0 \to \Mon_0$
mapping an object $(X, \N[X]  \to M)$ to the monoid $M$ admits a right adjoint 
\begin{equation}\label{preblueforgetfulfunctor}
\tilde{G} \colon \Mon_0 \to \N[\trait]  / \Mon_0\,,
\end{equation}
mapping 
a monoid $M$ to the object $(|M|, \N[|M|] \to M)$, where the second component is the counit of the adjunction $\N[\trait]  \dashv \vert\trait \vert$. 
The adjunction $\tilde{F} \dashv \tilde{G}$ induces an adjunction between the associated categories of monoids
\begin{equation}\label{preblueadjunction}
\xymatrix{\SRing \ar@/_1.1pc/[r]^{G} & {\trait\adj  / \SRing } \ar@/_1.1pc/[l]_{F}}\,,
\end{equation}
where $F$
maps an object $(A, A\adj \to R)$ to the semiring $R$ and its  right adjoint $G$
maps
a semiring $R$ to the object $(|R|, |R|\adj \to R)$, where the second component is the counit of the adjunction $\trait\adj  \dashv \vert\trait \vert$. 
\end{lemma}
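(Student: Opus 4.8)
The statement has two parts: first, the adjunction $\tilde F \dashv \tilde G$ at the level of the comma category $\N[\trait]/\Mon_0$; second, the fact that this adjunction lifts to an adjunction $F \dashv G$ on the categories of commutative monoid objects. I would handle these in sequence.

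For the first part, the plan is to exhibit the unit and counit (or, equivalently, the natural bijection of hom-sets) directly. Given an object $(X, \N[X]\to M)$ of the comma category and a monoid $N$ in $\Mon_0$, a morphism $(X,\N[X]\to M)\to \tilde G(N) = (|N|, \N[|N|]\to N)$ consists of a pointed-set map $X\to |N|$ together with a monoid morphism $\N[X]\to \N[|N|]$ making the evident square commute, where the bottom edge is $M\to N$; but by the adjunction $\N[\trait]\dashv |\trait|$ the map $\N[X]\to \N[|N|]$ is the image under $\N[\trait]$ of $X\to |N|$, and the counit $\N[|N|]\to N$ together with this square forces the data to be exactly a monoid morphism $M\to N$ (via the universal property of $\N[X]\to M$, which is epi, and the counit). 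Conversely any $M\to N$ produces such a datum by composing $X\to |N[X]|\to |M|\to |N|$. Naturality in both variables is a routine diagram chase. So $\Hom_{\N[\trait]/\Mon_0}((X,\N[X]\to M),\tilde G(N))\cong \Hom_{\Mon_0}(M,N) = \Hom_{\Mon_0}(\tilde F(X,\N[X]\to M),N)$, which is the claimed adjunction.

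For the second part, I would invoke the general principle that an adjunction $\tilde F\dashv \tilde G$ between symmetric monoidal categories in which $\tilde F$ is (strong) monoidal and $\tilde G$ is lax monoidal (a monoidal adjunction) automatically induces an adjunction between the corresponding categories of commutative monoid objects, with the lifted functors acting as $\tilde F, \tilde G$ on underlying objects. Concretely, I would check that $\tilde F$ sends the monoidal product of $\Bcat$ (eq.~\ref{monoidalproductB}) to the monoidal product $\otimes$ of $\Mon_0$ — which is immediate, since on the $M$-component the product in $\Bcat$ \emph{is} $M\otimes M'$ — so $\tilde F$ is strong monoidal and hence carries monoid objects to monoid objects; by Proposition~\ref{Bluep-category} this is precisely the functor $F$ sending a blueprint $(A,A\adj\to R)$ to $R$. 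Dually, $\tilde G$ is lax monoidal (the comparison maps $\tilde G(N)\otimes \tilde G(N')\to \tilde G(N\otimes N')$ come from the universal property, using that $\N[\trait]$ is monoidal), so $\tilde G$ lifts to $G$ sending a semiring $R$ to $(|R|, |R|\adj\to R)$ with the counit as structure map; one checks this target is a genuine blueprint (i.e.\ lies in $\bluep\subset \trait\adj/\SRing$), the epimorphism condition holding because $|R|$ generates $R$ as a semiring and the monomorphism condition because $|R|\hookrightarrow |R|$ tautologically. Finally the hom-set bijection from the first part is compatible with the monoid structures — a morphism of monoid objects on one side corresponds to one on the other — giving the adjunction \ref{preblueadjunction}.

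The main obstacle, such as it is, is bookkeeping rather than depth: one must be careful that $\tilde G(N)=(|N|,\N[|N|]\to N)$ genuinely lands in $\Bcat$ (not merely in the comma category), since $\Bcat$ is a \emph{full} subcategory cut out by conditions \ref{Bconditions}, and similarly that $G(R)$ lands in $\bluep$; but both verifications are immediate from the definitions, as noted above. A secondary point of care is that the adjunction on monoid objects is genuinely \emph{induced} by the one below — i.e.\ that the forgetful functors $\mon_\Bcat\to\Bcat$ and $\SRing\to\Mon_0$ commute with everything in sight — which again follows formally from the monoidal-adjunction framework. No essential difficulty is expected.
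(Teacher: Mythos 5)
Your argument is correct and, for the substantive first half, follows essentially the same route as the paper: the key point is the natural bijection $\Hom_{\N[\trait]/\Mon_0}\bigl((X,\N[X]\to M),\tilde G(N)\bigr)\cong\Hom_{\Mon_0}(M,N)$, obtained by transposing along $\N[\trait]\dashv|\trait|$ and using the counit, which is exactly the paper's proof; the passage to monoid objects, which the paper leaves as ``straightforward,'' you make precise via the standard monoidal-adjunction lifting (strong monoidal left adjoint, lax monoidal right adjoint), which is a legitimate way to fill that step in. Two cautions, neither fatal. First, the lemma is stated for the full comma categories $\N[\trait]/\Mon_0$ and $\trait\adj/\SRing$, so your parenthetical appeal to ``the universal property of $\N[X]\to M$, which is epi'' is both unavailable (objects of the comma category need not satisfy condition a) of \ref{Bconditions}) and unnecessary: the bijection only requires that the first component $f\colon X\to|N|$ be uniquely determined by $g\colon M\to N$ as the transpose of the composite $\N[X]\to M\to N$, with no epimorphism hypothesis, plus the converse construction you already give. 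Second, and for the same reason, the verifications that $\tilde G(N)$ lands in $\Bcat$ and $G(R)$ in $\bluep$ are not part of this lemma; they are precisely what the paper invokes immediately afterwards to restrict the adjunction \ref{preblueadjunction} to an adjunction between $\SRing$ and $\bluep$. Your checks of those facts are correct, but they belong to that subsequent restriction step rather than to the statement at hand.
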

\begin{proof}
Let   $(X, \N[X]  \to M)$ be an object of $\N[\trait]  / \Mon_0$
and $N$ a monoid. Let us consider a morphism  
$$
(X,\mathbb{N}[X]\to M)\to (|N|,\mathbb{N}[|N|]\to N)
$$
in the category $\N[\trait]  / \Mon_0$ and 
denote by $f\colon X\to |N|$ the induced set morphism.
In the commutative square
\begin{equation}\label{squareofpreblueadjunction}
\xymatrix{
\mathbb{N}[X] \ar[r]^{\N[f]} \ar[d] & \mathbb{N}[|N|] \ar[d] \\
M \ar[r] & N
}
\end{equation}
the  map $\N[f]$, because of the property of the vertical arrow on the right (which is the counit of the adjunction), amounts to the same as a map
$
\mathbb{N}[X]\to N
$.
Such a map, by adjunction, must be induced by the map $f\colon X\to |N|$. Thus, the assignment of the map $f$ and the commutative square \ref{squareofpreblueadjunction} are equivalent to the assignment of the commutative triangle
\begin{equation*}
\xymatrix{
\mathbb{N}[X] \ar[dr] \ar[d] & \\
M \ar[r] & N
}
\end{equation*}
But this diagram is equivalent to the assignment of a map $M\to N$, since the vertical map is given. 
We have therefore the adjunction $\tilde{F} \dashv \tilde{G}$, as claimed. The last statement is now straightforward. \end{proof}

Since image of the functor $\tilde{G}\colon \Mon_0 \to \N[\trait]  / \Mon_0$ is contained in the subcategory $\Bcat$,  
the adjunction \ref{preblueadjunction} restricts to the adjunction 
\begin{equation}\label{blueadjuction} 
\xymatrix{\SRing \ar@/_1.1pc/[r]^{G} & {\bluep} \ar@/_1.1pc/[l]_{F}}\,.
\end{equation}

It is immediate that the adjunction $\xymatrix{\SRing \ar@/_1.1pc/[r]^{\vert\trait\vert} &\Mon_0 \ar@/_1.1pc/[l]_{\trait\adj}}$ factorises through the adjunction \ref{blueadjuction} and the adjunction
\begin{equation}\label{MonBluepadjuction}
\xymatrix{\Mon_0 \ar@/_1.1pc/[r]^{\sigma} & {\bluep} \ar@/_1.1pc/[l]_{\rho}}\,,
\end{equation}
where $\rho(A,A\adj \to R) = A$ and 
$\sigma(A) = (A,\xymatrix{A\adj \ar[r]^= &A\adj})$.


The adjunctions above induce opposite adjunctions between the corresponding categories of affine schemes. We have therefore the following diagram 
\begin{equation}\label{adjunctiondiagram}
\xymatrix{
\aff_{\Mon_0}  \ar[r]^{\vert\trait\vert} \ar[d]^{G} & \aff_{\Set_\ast}   \ar@/_1.5pc/[l]_{\trait\adj} \ar[dl]^{\sigma} \\
\aff_{\Bcat} \ar@<1ex>[ur]^{\rho} \ar@/^0.8pc/[u]^{F}
}
\end{equation}
associated to the diagram
\begin{equation}\label{adjunctiondiagram2}
\xymatrix{
\Mon_0  \ar[r]^{\vert\trait\vert} \ar[d]^{\tilde{G}} & \Set_\ast   \ar@/_1.5pc/[l]_{\N[\trait]} \ar[dl]^{\tilde{\sigma}} \\
\Bcat \ar@<1ex>[ur]^{\tilde{\rho}} \ar@/^0.8pc/[u]^{\tilde{F}}
}
\end{equation}

We now wish to show that the functors in  diagram \ref{adjunctiondiagram2} satisfy the conditions that are required to apply  
\cite[Prop.~2.1, Cor.~2.2]{TV}. Of course, it will be enough to check that for the adjunctions  $\tilde{ F} \dashv \tilde{G}$ and $\tilde{\rho} \dashv \tilde{\sigma}$.
 
\begin{lemma}\label{lemmaadjunction1} In the adjunction $\xymatrix{\Mon_0 \ar@/_1.1pc/[r]^{\tilde{G}} & {\Bcat} \ar@/_1.1pc/[l]_{\tilde{F}}}$ 
\begin{enumerate}
\item the left adjoint $\tilde{F}$ is monoidal;
\item the right adjoint $\tilde{G}$ is conservative;
\item the functor $\tilde{G}$ preserves filtered colimits.
\end{enumerate} 
\end{lemma}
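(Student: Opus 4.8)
The plan is to verify the three properties directly from the explicit description of $\tilde F$ and $\tilde G$ given in Lemma \ref{blueforgetfulfunctorlemma}, using the characterization of the monoidal product on $\Bcat$ from Theorem \ref{B-category}. First I would record that $\tilde F$ sends $(X,\N[X]\to M)$ to $M$ and that, by the formula \ref{monoidalproductB} for the monoidal product in $\Bcat$, we have $\tilde F\bigl((X,\N[X]\to M)\otimes(X',\N[X']\to M')\bigr)=M\otimes M'=\tilde F(X,\N[X]\to M)\otimes\tilde F(X',\N[X']\to M')$, and that $\tilde F$ carries the unit object $(\mathbb S^0,\N[\mathbb S^0]\to\N)=(\mathbb S^0,\id)$ of $\Bcat$ to the unit $\N$ of $\Mon_0$; since these identifications are the obvious natural ones and are clearly compatible with the associativity, symmetry, and unit constraints, $\tilde F$ is a (strong) monoidal functor. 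This establishes (1).

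For (2), conservativity of $\tilde G$, I would argue as follows. Let $\phi\colon M\to N$ be a morphism in $\Mon_0$ such that $\tilde G(\phi)$ is an isomorphism in $\Bcat$. By the description of $\tilde G$, the morphism $\tilde G(\phi)$ has first component $|\phi|\colon |M|\to|N|$ and second component $\N[\phi]\colon \N[|M|]\to\N[|N|]$; a morphism in $\Bcat$ is an isomorphism precisely when its underlying morphism in $\Mon_0$ (equivalently, when the map on monoids $\N[|M|]\to\N[|N|]$ together with the map of underlying sets) is an isomorphism — but the key point is simpler: since $\tilde G(\phi)$ being an isomorphism in $\Bcat$ forces its first component $|\phi|$ to be a bijection of sets, and $\phi$ is a monoid homomorphism that is bijective on underlying sets, $\phi$ itself is an isomorphism in $\Mon_0$. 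Hence $\tilde G$ reflects isomorphisms, i.e.\ is conservative. (Alternatively, one notes that $\tilde G$ is faithful because $\tilde F\dashv\tilde G$ and the counit $\tilde F\tilde G\to\id$ is an isomorphism, so $\tilde G$ is full and faithful, hence conservative.)

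For (3), I would show $\tilde G$ preserves filtered colimits by unwinding its definition componentwise. Let $\{M_i\}_{i\in I}$ be a filtered diagram in $\Mon_0$ with colimit $M=\dirlim M_i$. Then $\tilde G(M)=(|M|,\N[|M|]\to M)$. Since the forgetful functor $|\trait|\colon\Mon_0\to\Set_\ast$ preserves filtered colimits (filtered colimits of monoids are computed on underlying sets), we have $|M|\cong\dirlim|M_i|$, and since $\N[\trait]$ is a left adjoint it preserves all colimits, so $\N[|M|]\cong\dirlim\N[|M_i|]$. Combining these with the computation of colimits in $\Bcat$ carried out in the proof of Theorem \ref{B-category} — namely that the colimit of the diagram $\{\tilde G(M_i)\}=\{(|M_i|,\N[|M_i|]\to M_i)\}$ is $(\widetilde{\dirlim|M_i|},\N[\widetilde{\dirlim|M_i|}]\to\dirlim M_i)$, where the tilde denotes the image of $\dirlim|M_i|$ in $|\dirlim M_i|$ — one checks that this image is all of $|M|$ (the natural map $\dirlim|M_i|\to|M|$ is already a bijection), so that $\dirlim\tilde G(M_i)\cong(|M|,\N[|M|]\to M)=\tilde G(M)$, naturally in the diagram. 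The main obstacle, such as it is, lies in this last step: one must take care that the ``image'' construction appearing in the colimit formula for $\Bcat$ does not actually change anything here, which is exactly the statement that $|\trait|$ preserves the relevant filtered colimit; once that is granted, the identification is routine.
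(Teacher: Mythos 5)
Your argument is correct and follows essentially the same route as the paper: the paper dismisses (1) and (2) as straightforward, and proves (3) by exactly your observation that filtered colimits in $\Bcat$ are computed ``naively'' (surjectivity passing to the colimit, injectivity holding because the diagram is filtered), which combined with the fact that $|\trait|$ preserves filtered colimits identifies $\dirlim\tilde{G}(M_i)$ with $\tilde{G}(\dirlim M_i)$. One harmless slip: the second component of $\tilde{G}(\phi)$ is $\phi\colon M\to N$ itself rather than $\N[\phi]$, which if anything makes (2) even more immediate, since an isomorphism in the comma category has both components invertible.
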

\begin{proof}
(1) and (2) are straightforward.

As for (3), we have to show that the right adjoint preserves filtered colimits, which is also quite obvious. The colimit of a filtered diagram $(X_i, \N[X_i] \to M_i)$ is indeed given by
\[
(\underrightarrow{\mathrm{lim}}X_i, \N[\underrightarrow{\mathrm{lim}}X_i] \to\underrightarrow{\mathrm{lim}}M_i)
\]
provided that it belongs to our category (notice that $\N[\underrightarrow{\mathrm{lim}}X_i] \cong \underrightarrow{\mathrm{lim}}\N[X_i]$ since $\N[\trait]$ is a left adjoint). But it does, because the map 
$N[\underrightarrow{\mathrm{lim}}X_i] \to\underrightarrow{\mathrm{lim}}M_i$ is surjective due to the fact that so are
the maps $\N[X_i]\to M_i$ and the injectivity condition is satisfied since the diagram is filtrant. 
\end{proof}

\begin{lemma}\label{lemmaadjunction2} In the adjunction $\xymatrix{\Bcat \ar@/_1.1pc/[r]^{\tilde{\rho}} & {\Set_\ast} \ar@/_1.1pc/[l]_{\tilde{\sigma}}}$ 
\begin{enumerate}
\item the left adjoint $\tilde{\sigma}$ is monoidal;
\item the right adjoint $\tilde{\rho}$ is conservative;
\item the functor $\tilde{\rho}$ preserves filtered colimits.
\end{enumerate} 
\end{lemma}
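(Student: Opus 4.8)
The plan is to follow the pattern of the proof of Lemma \ref{lemmaadjunction1}, treating the three assertions in turn. Recall that $\tilde\sigma$ sends a pointed set $X$ to the object $(X,\N[X]\xrightarrow{=}\N[X])$ carrying the ``trivial'' pre-addition, that $\tilde\rho$ sends $(X,\N[X]\to M)$ to the underlying pointed set $X$, and that $\tilde\sigma$ is left adjoint to $\tilde\rho$. I would dispose of (1) and (3) first, as they are routine, and concentrate the effort on (2).

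For (1), I would unravel the monoidal product \ref{monoidalproductB} applied to two trivial objects: $\tilde\sigma(X)\otimes\tilde\sigma(X')$ has underlying pointed set $X\wedge X'$, and, the second arrow of the composite in \ref{monoidalproductB} being an identity here, its second component is just the canonical comparison map $\N[X\wedge X']\to\N[X]\otimes\N[X']$, which is an isomorphism because $\N[\trait]$ is monoidal (as already noted in the proof of Theorem \ref{B-category}). Composing with its inverse gives a natural isomorphism $\tilde\sigma(X)\otimes\tilde\sigma(X')\cong\tilde\sigma(X\wedge X')$; likewise $\tilde\sigma(\mathbb{S}^0)$ is the monoidal unit of $\Bcat$, and the coherence axioms for $\tilde\sigma$ reduce to those already available for the comparison isomorphisms of $\N[\trait]$, so $\tilde\sigma$ is strong monoidal. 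For (3), I would quote the construction of colimits in $\Bcat$ from the proof of Theorem \ref{B-category}, together with the fact --- already used in the proof of Lemma \ref{lemmaadjunction1} --- that for a \emph{filtered} diagram the image occurring in that construction is superfluous, so that $\dirlim(X_i,\N[X_i]\to M_i)=(\dirlim X_i,\N[\dirlim X_i]\to\dirlim M_i)$; indeed the canonical map $\dirlim X_i\to|\dirlim M_i|$ is the filtered colimit of the monomorphisms $X_i\to|M_i|$ --- using that $|\trait|\colon\Mon_0\to\Set_\ast$ preserves filtered colimits and that filtered colimits of pointed sets preserve monomorphisms --- hence is a monomorphism. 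Applying $\tilde\rho$ to this colimit returns $\dirlim X_i=\dirlim\tilde\rho(X_i,\N[X_i]\to M_i)$, so $\tilde\rho$ preserves filtered colimits.

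The main obstacle is (2). A morphism $f\colon(X,\N[X]\to M)\to(X',\N[X']\to M')$ of $\Bcat$ has a pointed-set component $a=\tilde\rho(f)\colon X\to X'$ and a monoid component $h\colon M\to M'$, fitting into the commuting square whose remaining two sides are $\N[a]$ and the two structure maps; since the source structure map $\N[X]\to M$ is an epimorphism, $h$ is already determined by $a$. If $a$ is an isomorphism then $\N[a]$ is an isomorphism of monoids, so from that square --- using once more that both structure maps are epimorphisms --- one gets that $h$ is invertible as a map of monoids; it then remains to check that $h^{-1}$, together with $a^{-1}$, still constitutes a morphism of $\Bcat$, and this is precisely where the epimorphism condition in \ref{Bconditions} built into the definition of $\Bcat$ does essential work. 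Unlike in Lemma \ref{lemmaadjunction1}, where $\tilde G(f)$ literally carried $f$ as one of its components, here the monoid part of $f$ must be reconstructed from the pointed-set part, so this last verification --- conservativity of $\tilde\rho$ --- is the step that needs to be carried out with the most care.
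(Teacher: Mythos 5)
Parts (1) and (3) of your proposal are correct and follow the same route as the paper: the paper dismisses (1) as straightforward (your verification via the monoidality of $\N[\trait]$ is exactly the intended one), and for (3) it simply refers back to the filtered-colimit computation in the proof of Lemma \ref{lemmaadjunction1}, which you reproduce, adding the correct justification that filtered colimits in $\Set_\ast$ preserve monomorphisms, so the image $\widetilde{\dirlim X_i}$ appearing in the colimit construction of Theorem \ref{B-category} is superfluous and $\tilde\rho$ of the colimit is $\dirlim X_i$.

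The gap is in (2), and it is a real one. From the commutative square $h\circ s=s'\circ\N[a]$ with $s,s'$ epimorphisms and $\N[a]$ an isomorphism you may conclude only that $h$ is an epimorphism, not that it is invertible; and the ``remaining check'' about $h^{-1}$, which you announce as the delicate point, is exactly the content of conservativity and is never carried out. The failure of your intermediate claim is concrete: take $X=\{\ast,x,y\}$, let $B_1=\tilde\sigma(X)=(X,\N[X]=\N[X])$ and $B_2=(X,\N[X]\xrightarrow{\pi}M)$, where $M$ is the quotient of $\N[X]$ by the congruence generated by $x+x\sim x+y$; the congruence classes of $x$, $y$, $0$ and of the absorbent element are singletons, so $B_2$ satisfies conditions \ref{Bconditions}. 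The morphism $f=(\mathrm{id}_X,\pi)\colon B_1\to B_2$ has first component an isomorphism, yet its second component $\pi$ is surjective but not injective, so ``$h$ is invertible'' is false here; worse, an inverse of $f$ in $\Bcat$ would be forced to have first component $\mathrm{id}_X$ and second component a monoid map $k$ with $k\circ\pi=\mathrm{id}_{\N[X]}$, which is impossible, so $f$ itself is not an isomorphism although $\tilde\rho(f)$ is. Hence no argument along the lines you sketch can be completed: what the epimorphism condition really yields is that a morphism of $\Bcat$ is determined by its first component, i.e.\ that $\tilde\rho$ is faithful — which is precisely the observation the paper's own one-line proof of (2) appeals to — whereas reflecting isomorphisms is strictly stronger (the situation is analogous to the forgetful functor from topological spaces to sets, where continuous bijections need not be homeomorphisms). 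Your write-up correctly senses that an extra verification is needed, but it neither performs it nor can it be repaired as stated; the example above should be confronted directly before (2) is used downstream.
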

\begin{proof}
The functors $\tilde{\sigma}$, $\tilde{\rho}$ are defined as follows:  $\tilde{\sigma}(X) = (X, \xymatrix{\N[X] \ar[r]^{=} &\N[X]})$ and $\tilde{\rho}(X, \N[X] \to M) = X$. (1) is then straightforward. As for (2), we know that a map $(X,\N[X]\to M)\to (Y,\N[Y]\to N)$ is determined by the first component, so that $\tilde{\rho}$ is conservative. Finally, (3) is proved by proceeding as in the proof of Lemma \ref{lemmaadjunction1}.
\end{proof}

\begin{prop}\label{Bschemesadjunctions}The functor $F\colon \aff_{\Bcat} \to \aff_{\Mon_0}$ is continuous w.r.t.~the Zariski and the flat topology; morevover,
the functor 
\begin{equation} \widehat{F} \colon \sh(\aff_{\Bcat}) \to \sh(\aff_{\Mon_0})
\end{equation}
preserves the subcategories of schemes and so induces a functor
\begin{equation}\label{Bschemesadjunctionseq}
\begin{aligned} \widehat{F}\colon \sch_{\Bcat} &\to \sch_{\Mon_0}\\
\Sigma &\mapsto \widehat{F}(\Sigma)
\end{aligned}
\end{equation}
\end{prop}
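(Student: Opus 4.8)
The plan is to deduce the statement from To\"en and Vaqui\'e's comparison results \cite[Prop.~2.1, Cor.~2.2]{TV}, applied to the adjunction $\tilde{F}\dashv\tilde{G}\colon\Bcat\to\Mon_0$, whose relevant properties were recorded in Lemma \ref{lemmaadjunction1}. The starting observation is that, since $\tilde{F}$ is strong monoidal (Lemma \ref{lemmaadjunction1}(1)), it carries commutative monoid objects to commutative monoid objects, and therefore induces the functor $F\colon\bluep\to\SRing$ and hence $F\colon\aff_{\Bcat}\to\aff_{\Mon_0}$; moreover, by Lemma \ref{blueforgetfulfunctorlemma}, $F\colon\bluep\to\SRing$ is itself a left adjoint (with right adjoint $G$), so it preserves all colimits, in particular pushouts, and consequently $F\colon\aff_{\Bcat}\to\aff_{\Mon_0}$ preserves the fibered products occurring in Definition \ref{definitionsofcovers}.

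First I would check that $F$ is continuous for both the flat and the Zariski topology, i.e.\ that it sends flat covers to flat covers and Zariski covers to Zariski covers. Since $F$ commutes with the relevant fibered products, it suffices to verify that $F$ preserves each of the properties entering Definition \ref{definitionsofcovers}: flatness of a morphism, being an epimorphism, being of finite presentation, and conservativity of a finite family of change-of-base functors on module categories. Preservation of epimorphisms is immediate because $F$ is a left adjoint; preservation of finite presentation holds because its right adjoint, induced by $\tilde{G}$, commutes with filtered colimits (Lemma \ref{lemmaadjunction1}(3)); and preservation of flatness and of conservativity of base-change families rests on $\tilde{F}$ being monoidal and $\tilde{G}$ conservative --- this is the module-theoretic core of \cite[Prop.~2.1]{TV}, which I would invoke directly rather than reprove.

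Continuity makes the restriction functor $\widehat{F}^{\,\ast}\colon\sh(\aff_{\Mon_0})\to\sh(\aff_{\Bcat})$, $\cG\mapsto\cG\circ F^{\mathrm{op}}$, well defined, and $\widehat{F}\colon\sh(\aff_{\Bcat})\to\sh(\aff_{\Mon_0})$ is its left adjoint, realized as the sheafification of the left Kan extension of $F$ along the Yoneda embedding; on representables $\widehat{F}$ sends $\spec B$ to $\spec F(B)$, so it carries affine $\Bcat$-schemes to affine semiring schemes. Being a left adjoint between sheaf topoi, $\widehat{F}$ preserves colimits, hence coproducts and epimorphisms of sheaves; thus, if $\cF$ is a $\Bcat$-scheme with affine Zariski cover $\coprod_i\Xi_i\to\cF$, then $\coprod_i\widehat{F}(\Xi_i)\to\widehat{F}(\cF)$ is again a sheaf epimorphism with affine source. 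The one point that is not purely formal is that each $\widehat{F}(\Xi_i)\to\widehat{F}(\cF)$ is an open Zariski immersion in the sense of Definition \ref{definitionsofcoversforsheaves}(b): this requires that $F$ carry open Zariski immersions of affine $\Bcat$-schemes to open Zariski immersions --- which follows from the continuity just established together with the identification, on affines, of the two notions of open Zariski immersion \cite[Lemma 2.14]{TV} --- and that this property be stable under base change along arbitrary sheaf maps $\Xi\to\widehat{F}(\cF)$, which is exactly what \cite[Cor.~2.2]{TV} provides under our hypotheses. I expect this last step to be the main obstacle, being the only place where one genuinely uses that $F$ respects fibered products simultaneously with all four local properties; granting it, $\widehat{F}$ restricts to the asserted functor $\sch_{\Bcat}\to\sch_{\Mon_0}$.
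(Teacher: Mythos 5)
Your architecture is sensible and broadly parallel to what \cite[Cor.~2.22]{TV} packages (left adjointness gives preservation of pushouts and epimorphisms; $\tilde G$ preserving filtered colimits gives preservation of finite presentation; the sheaf-level part via Kan extension and \cite[Lemma 2.14]{TV} is essentially formal). The genuine gap is at the one point you chose to ``invoke directly rather than reprove'': the preservation of flatness, and of conservativity of a finite family of base-change functors, under $F$. The results \cite[Prop.~2.1, Cor.~2.2]{TV} are formal statements about a monoidal adjunction — they produce induced adjunctions on commutative monoid objects and on modules and transfer conservativity and filtered colimits to the induced right adjoints — but they do \emph{not} assert that the induced functor on commutative monoid objects sends flat morphisms to flat morphisms, and this does not follow formally from $\tilde F$ being strong monoidal and $\tilde G$ conservative.

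Concretely, to deduce that $R\to S$ is flat in $\SRing$ from flatness of a morphism $A_R=(A,A\adj\to R)\to A_S=(A,A\adj\to S)$ in $\bluep$, one must compare, for an $R$-module $M$, the two $A_S$-modules $(\vert M\otimes_R S\vert,\, M\otimes_R S)$ and $(\vert M\vert, M)\otimes_{A_R} A_S$; i.e.\ one must prove that the square formed by the embeddings $R\trait\Mod\to A_R\trait\Mod$, $S\trait\Mod\to A_S\trait\Mod$ and the two base-change functors commutes up to isomorphism. This is a base-change (projection-formula type) compatibility for the \emph{right}-adjoint direction at the module level: for the left adjoint such compatibility is automatic from strong monoidality, but for the right adjoint it is not, and it is exactly the non-formal content of the paper's proof, which identifies $A_R$-modules explicitly, embeds $R\trait\Mod$ into $A_R\trait\Mod$, and checks the commutativity componentwise (the second component by adapting \cite[Prop.~3.6]{TV}) before concluding via \cite[Cor.~2.22]{TV}. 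That To\"en and Vaqui\'e themselves prove such compatibilities by hand for their examples (their Prop.~3.6) is a sign that this step cannot be cited away. The same square is also what you need for conservativity: from $u\otimes_R S_j$ being isomorphisms you pass to the blueprint level only through this identification. Without this verification, your claim that $F$ sends flat and Zariski covers to covers — and hence the whole continuity statement — is unsupported; the rest of your argument can stand once this module-level computation is supplied.
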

\begin{proof} 
a) We first note that, given objects  $X_M= (X, \N[X]\to M)$, $X_M'= (X, \N[X]\to M')$ in $\Bcat$,
if $X_M\to X_{M'}$ is a flat morphism in $\Bcat$, then in the associated diagram
\[
\xymatrix{
M\trait\Mod \ar[d] \ar[r] & X_M\trait\Mod \ar[d] \\
M' \trait\Mod \ar[r] & X_{M'}\trait\Mod
}
\]
the natural transformation between the two compositions is an isomorphism. We wish to prove that an analogous property
holds when one considers a flat morphism in the category $\bluep$. As usual, it will be enough to work in the category $\trait \adj /\SRing$. Let $A_R = (A, A\adj \to R)$ and $A_S =(A, A\adj \to S)$ be objects in this category, and consider 
a flat morphism $A_R\to A_S$. An $A_R$-module is given by a pair
$$
(N,M)\in\Set_\ast\times\Mon_0
$$
such that $N$ is a subset of $|M|$ and generates it as a module, together with an action of $A$ on $N$ and an action of $R$ on $M$, such that the former is the restriction of the latter.  If $M$ is an $R$-module $M$, its associated $A_R$-module is the $(R, R \adj  \to R)$-module $(\vert M\vert,M)$, whose $A_R$-module structure is induced  by the map
$$
A_R\to (R, R\adj \to R)
$$
given by the pair of immersions $\iota\colon A\hookrightarrow R$ and $\iota\otimes_{\fun}\text{id} \colon A \adj  \to R\adj$, where the latter fits
in the commutative square
$$
\xymatrix{
A \adj  \ar[d] \ar[r]^{\iota\otimes_{\fun}\text{id}} & R\adj  \ar[d] \\
R \ar[r]_{\text{id}_R} & R
}
$$
The category $R\trait\Mod$ can therefore be identified with the full subcategory of the category of
$$
(A\adj , A\adj \to R)\trait\Mod
$$
 whose underlying objects in $\Mon_0 /\Mon_0$ are of the kind $(M,M=M)$.
 
We have now to show that, for any flat morphism $A_R\to A_S$ in $\trait \adj /\SRing$,  in the associated diagram
$$
\xymatrix{
R\trait\Mod \ar[d] \ar[r] & A_R\trait\Mod \ar[d] \\
S\trait\Mod \ar[r] & A_S\trait\Mod
}
$$
the natural transformation between the two compositions is an isomorphism. As for the first component, the commutativity up 
isomorphism of the above diagram is straightforward. As for the second component, that can be easily shown by adapting the argument in proof of Prop.~3.6 of \cite{TV}. The statement then follows from \cite[Cor.~2.22]{TV}.
\end{proof}

\begin{prop}\label{Bschemesadjunctions2}
The functor $\sigma \colon \aff_{\Set_\ast} \to \aff_{\Bcat}$ is continuous w.r.t.~the Zariski and the flat topology; morevover,
the functor 
\begin{equation} \widehat{\sigma} \colon \sh(\aff_{\Set_\ast}) \to \sh(\aff_{\Bcat})
\end{equation}
preserves the subcategories of schemes and so induces a functor
\begin{equation}
\begin{aligned} \widehat{\sigma}\colon \sch_{\Set_\ast} &\to \sch_{\Bcat}\\
\Xi &\mapsto \widehat{\sigma}(\Xi)
\end{aligned}
\end{equation}
\end{prop}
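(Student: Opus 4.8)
The plan is to mimic the structure of the proof of Proposition~\ref{Bschemesadjunctions}, which establishes the corresponding statement for the functor $F$. There are three things to verify: (i) the functor $\sigma$ (equivalently, on rings, the functor $\trait\adj$ or its restriction) preserves flat morphisms; (ii) $\sigma$ preserves flat covers and Zariski covers; and (iii) the induced functor $\widehat\sigma$ on sheaves sends $\Set_\ast$-schemes to $\Bcat$-schemes. The key observation is that $\tilde\sigma$ is the left adjoint $X\mapsto(X,\N[X]\xrightarrow{=}\N[X])$ of Lemma~\ref{lemmaadjunction2}, and it is monoidal (part (1) of that lemma); dually, on affine schemes, $\sigma$ is a right adjoint whose left adjoint $\rho$ is induced by a left adjoint at the level of monoid objects. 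The natural framework is again \cite[Prop.~2.1, Cor.~2.2, Cor.~2.22]{TV}: if a functor between the relevant monoidal categories is monoidal, its right adjoint is conservative, and it preserves filtered colimits, then the opposite functor between the categories of affine schemes is continuous for the flat and Zariski topologies, and the induced functor on sheaves preserves schemes. Lemma~\ref{lemmaadjunction2} has already checked precisely these three hypotheses for $\tilde\sigma$.

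First I would make explicit what $\sigma$ does on affine schemes: it sends $\spec A$ (for $A$ a monoid with absorbent object, i.e.\ an object of $\Mon_0$) to $\spec(A, A\adj\xrightarrow{=}A\adj)$, the ``trivial'' blueprint on $A$ with no relations, whose associated semiring is the free semiring $A\adj$. Then I would identify, for such a trivial blueprint $B=(A,A\adj=A\adj)$, the category of $B$-modules with the category of $\N[\trait]$-modules over the free semiring, i.e.\ with pairs $(N,M)$ where $M$ is an $A\adj$-module and $N\subset|M|$ is an $A$-submodule generating $M$; this parallels the module-category analysis carried out in the proof of Proposition~\ref{Bschemesadjunctions}. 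The point is that under this identification, base change along a morphism $\sigma(f)$ coming from a morphism $f\colon A\to A'$ in $\Mon_0$ is computed componentwise, and exactness of $\trait\otimes_A A'$ on pointed sets (which is automatic, since $\wedge$ and hence smash-base-change is exact on $\Set_\ast$) transfers to exactness of base change on $B$-modules. Hence $\sigma$ carries flat morphisms to flat morphisms.

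Next, for covers: a flat cover $\{\spec W_j\to\spec V\}$ in $\aff_{\Set_\ast}$ consists of flat maps together with a finite conservative family of base-change functors. Applying $\sigma$, the image maps are flat by the previous step, and conservativity of the family $\prod\trait\otimes_V W_j$ is preserved because the module categories over the trivial blueprints are built out of the underlying pointed-set module categories (on the $N$-component) together with a freely generated $M$-component determined by $N$; a family that is jointly conservative on the $\Set_\ast$-level remains jointly conservative here. For Zariski covers one must further check that open Zariski immersions (flat epimorphisms of finite presentation) are preserved; epimorphisms are preserved because $\tilde\sigma$ is monoidal and fully faithful onto the subcategory of trivial blueprints (so $\Hom$ sets are unchanged), and finite presentation is preserved because $\tilde\rho$ (the would-be left adjoint) preserves filtered colimits, so that the finite-presentation condition, stated via commuting with filtered colimits in the over-category, transfers. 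Then continuity follows from \cite[Cor.~2.22]{TV}.

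Finally, once $\sigma$ is continuous, the induced geometric morphism on categories of sheaves restricts $\widehat\sigma$ to a functor sending sheaves admitting an affine Zariski cover to sheaves admitting an affine Zariski cover — this is the formal consequence of \cite[Prop.~2.1, Cor.~2.2]{TV} once one knows that $\sigma$ sends affine schemes to affine schemes and Zariski covers to Zariski covers, which are exactly the facts established above. The main obstacle I anticipate is not the formalism but the module-category identification in step two: one must be careful that $\Bcat$-modules (equivalently $\bluep$-modules) over a trivial blueprint really do decompose as claimed, and that base change respects the generation condition $N\subset|M|$ rather than forcing one to pass to the submodule generated by the image — i.e.\ one needs the analogue, for the $\sigma$ side, of the delicate point handled via ``$\widetilde{\phantom{x}}$'' (taking images) in the proof of Theorem~\ref{B-category} and Proposition~\ref{Bschemesadjunctions}. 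As there, I expect this to be dispatched by adapting the argument of \cite[Prop.~3.6]{TV}, but it is the one place where genuine checking, rather than citation, is required.
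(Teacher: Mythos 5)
Your proposal follows essentially the same route as the paper's proof: that proof also rests on Lemma \ref{lemmaadjunction2} together with To\"en--Vaqui\'e's formalism, and the only point it verifies explicitly is precisely your second step, namely that a module over a trivial blueprint $\sigma(A)=(A, A\adj = A\adj)$ is a pair $(N,M)$ with compatible actions and that base change along $\sigma$ of a flat morphism of $\Mon_0$ is computed componentwise, so that the relevant square of module categories commutes; the remaining checks you list (covers, epimorphisms, finite presentation, preservation of schemes) are left implicit in the paper, being folded into the citation of \cite[Cor.~2.22]{TV} exactly as in Proposition \ref{Bschemesadjunctions}.

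Two of your side justifications are wrong as stated, although neither is load-bearing. First, base change over a monoid in $(\Set_\ast,\wedge)$ is \emph{not} automatically exact: $N\otimes_A A'$ is a coequalizer, i.e.\ a quotient of $N\wedge A'$, and flatness of a morphism in $\Mon_0$ is a genuine condition. You do not need this over-claim: the morphisms occurring in a flat or Zariski cover of $\aff_{\Set_\ast}$ are flat by definition, and it is that given exactness on the first component which transfers componentwise --- this is all the paper uses. Second, for a $\sigma(A)$-module $(N,\ \N[N]\twoheadrightarrow M)$ the second component is a quotient of $\N[N]$, not the free object on $N$, and it is genuinely extra data; so the one-line claim that joint conservativity is inherited ``because the $M$-component is freely generated and determined by $N$'' does not stand as written: one must also control the second components (for instance by the kind of argument used for the analogous delicate point in Proposition \ref{Bschemesadjunctions}, adapting \cite[Prop.~3.6]{TV}, or simply by leaving it to the hypotheses of \cite[Cor.~2.22]{TV}, as the paper does). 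Similarly, preservation of epimorphisms follows from $\sigma$ being a left adjoint at the level of monoid objects (via the adjunction $\sigma\dashv\rho$), not from full faithfulness onto trivial blueprints, which only controls morphisms into such blueprints. With these local repairs your argument coincides with the paper's, and you correctly isolate the one place (the image/tilde issue in the componentwise description of tensor products) where genuine checking rather than citation is required.
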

\begin{proof}
Consider a flat morphism $A\to B$ in the category $\Mon_0$, and denote by $A_{A\adj}$ the object 
$(A, A\adj = A\adj)$ in $\trait\adj/\SRing$. Each $A_{A\adj}$-module is given by a pair 
$(N,M)\in\Set_\ast \times\Mon_0$ together with an action of $A$ on $N$ and an action of $A\adj $ on $M$, the two actions being compatible in the obvious sense. In the diagram
$$
\xymatrix{
A_{A\adj}\trait\Mod \ar[d] \ar[r] & A\trait\Mod \ar[d] \\
B_{B\adj}\trait\Mod \ar[r] & B\trait\Mod
}
$$
the horizontal map sends an object $(N,M)$ to the set $N$ endowed with an action of the monoid $A$. Since tensor products are defined ``componentwise'', the diagram commutes.\\
\end{proof}

%

\subsection{$\nbcat$-schemes}\label{SectionFinal1}

By Proposition \ref{Bschemesadjunctions2} there is an induced functor $\widehat{\sigma}\colon \sch_{\Set_\ast} \to \sch_{\Bcat}$. One would like this functor to have a left adjoint determined by the functor $\rho\colon \aff_{\Set_\ast} \to \aff_{\Bcat}$. The functor $\rho$ may be easily shown to preserve Zariski covers, but it does not commute
with finite limits (in other words, it is not continuous w.r.t.~the Zariski topology, according to the usual terminology).
\begin{ex}
Let  us consider the free monoid $M=\langle X,Y\rangle$ and the blueprint $B$ defined by the free monoid
$\langle T, T_1,T_2,S, S_1, S_2\rangle$ with the relations $T= T_1+T_2$ and $S= S_1 +S_2$. Let $f, g\colon M \to B$ be the morphisms mapping $(X, Y)$, respectively, into $(T_1, T_2)$ and $(S_1, S_2)$. The coequalizer of $f$ and $g$ is the blueprint $B'$ defined by the free monoid $\langle X,Y, Z\rangle$ with the relation $Z= X+Y$, while the coequalizer of $\rho f$ and $\rho g$ is the the free monoid $\langle T,S, Z_1, Z_2\rangle$. The latter is obviously different from $\rho B'$.
\end{ex}

This drawback may be sidestepped by proceeding as follows:
1) omit the requirement that the map $A\to \vert A\adj  \vert \to \vert R\vert$ is a monomorphism in Definition \ref{def-bluep} and define a  category $\nbluep$ that contains the category $\bluep$ of blueprints; analogously, by omitting the  second condition in eq.~\ref{Bconditions}, define a category $\nbcat$ containing $\Bcat$; 2) 
prove that   there is a  functor $\rho\colon \aff_{\Set_\ast} \to \aff_{\nbcat}$ that is continuous w.r.t.~the Zariski topology; 3) define the category of schemes $\sch_{\nbcat}$ associated to this new category; 4) restrict our attention to the subcategory of 
$\sch_{\nbcat}$ consisting of schemes that admit a cover by affine schemes in the category $\aff_{\Bcat}$.

More precisely, the categories $\nbcat$ and $\nbluep$ are defined in the following way.
\begin{definition}\label{def-nbluep} 
The category $\nbcat$ is the full subcategory  of $\N[\trait] /\Mon_0$ whose objects $$(X, \N[X] \to M)$$ satisfy the condition that the morphism $\N[X] \to M$ is an epimorphism.\\
The category $\nbluep$ is the category $\mon_\nbcat$ of monoids in the symmetric monoidal category $\nbcat$.
\end{definition}
 
We denote again by $\rho\colon \nbluep \to  \Mon_0$ the forgetful functor, $\rho(A,A\adj \to R) = A$; 
 analogously  to adjunction ~\ref{MonBluepadjuction}, there is an adjunction 
\begin{equation}\label{MonBluepadjuctionnew}
\xymatrix{\Mon_0 \ar@/_1.1pc/[r]^{\sigma} & {\nbluep} \ar@/_1.1pc/[l]_{\rho}}\,,
\end{equation}
where $\sigma(A) = (A,\xymatrix{A\adj \ar[r]^= &A\adj})$.

\begin{lemma}\label{extm}
{\rm (a)}
Given an object $(A,A\adj\to R)$ of $\nbluep$, any diagram $X\colon I\to A-\Mod$ can be lifted to a diagram $I\to (A,A\adj\to R)-\Mod$.\\
{\rm (b)} Given a diagram $X\colon I\to \Mon_0$ and  a sieve $I_0$ of $I$,   any lift of $X_{\vert I_0}$ to a diagram $I_0\to \nbluep$ can be extended to a diagram $I\to \nbluep$.
\end{lemma}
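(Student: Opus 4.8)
The plan is to prove both statements by exploiting the fact that membership in $\nbcat$ (resp. $\nbluep$) requires only the \emph{epimorphism} condition on $\N[X]\to M$ (resp. $A\adj\to R$), so that the only thing one has to check when lifting or extending a diagram is surjectivity of the relevant structure maps, which is extremely easy to arrange by taking the \emph{free} semiring construction $\trait\adj$ at each node.

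For part (a): given $(A,A\adj\to R)$ in $\nbluep$ and a diagram $X\colon I\to A\trait\Mod$, I would define the lift $\widetilde X\colon I\to (A,A\adj\to R)\trait\Mod$ by sending each object $i\in I$ to the pair $(X(i),\, X(i)\adj)$, where $X(i)\adj$ carries the free semiring structure over $A\adj$ (obtained by applying $\trait\adj$ to the $A$-action on $X(i)$) together with its $R$-module structure pulled back along $A\adj\to R$ — more precisely, one takes the pushout $X(i)\adj\otimes_{A\adj}R$ as the $R$-module component, and $X(i)$ sits inside it as a generating pointed subset, so the epimorphism condition holds by construction. On morphisms one applies $\trait\adj$ and the corresponding functoriality of the pushout. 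The composite with the forgetful functor $(A,A\adj\to R)\trait\Mod\to A\trait\Mod$ (which reads off the first component) recovers $X$ on the nose. The only point needing care is that the pushout along $A\adj\to R$ genuinely lands in the category of $R$-modules whose underlying pointed set contains the image of $X(i)$ as a generating subset; this is automatic because $\trait\adj$ is a left adjoint and hence preserves the epimorphism $A\adj\twoheadrightarrow R$ after base change.

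For part (b): given $X\colon I\to\Mon_0$ and a sieve $I_0\subseteq I$ with a lift $\widetilde X_0\colon I_0\to\nbluep$, I would extend $\widetilde X_0$ to all of $I$ by, for each object $i\notin I_0$, assigning the ``canonical free'' lift $\sigma(X(i)) = (X(i), X(i)\adj = X(i)\adj)$ supplied by the functor $\sigma$ from adjunction \eqref{MonBluepadjuctionnew}, and on morphisms $i\to j$ using: $\sigma$ on its arrows when both $i,j\notin I_0$; the counit-type comparison map $\sigma(X(i))\to\widetilde X_0(j)$ coming from the adjunction $\rho\dashv\sigma$ when $i\notin I_0$ but $j\in I_0$ (note that since $I_0$ is a sieve, there are no arrows $i\to j$ with $i\in I_0$, $j\notin I_0$, so this is the only mixed case); and $\widetilde X_0$ itself when both lie in $I_0$. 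Functoriality of this assignment follows from the naturality of the unit/counit of $\rho\dashv\sigma$ together with the functoriality of $\widetilde X_0$ on $I_0$; compatibility with $X$ is built in since $\rho$ applied to each chosen lift returns the original node.

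The main obstacle I anticipate is not in the existence of the lifts — freeness makes each individual node trivial — but in verifying \emph{functoriality} of the extended diagram in part (b), i.e. that the three types of transition maps (pure-$\sigma$, mixed, and pure-$I_0$) compose correctly across triangles in $I$ that straddle the boundary of the sieve $I_0$. The sieve hypothesis is exactly what rules out the problematic configuration (an arrow leaving $I_0$), so the verification reduces to a diagram chase using the triangle identities for $\rho\dashv\sigma$; this is routine but is the one place where one must actually use that $I_0$ is a sieve rather than an arbitrary subcategory. A secondary, milder point is confirming that the free-semiring components genuinely satisfy the (weakened) blueprint axioms — but since $\nbcat$ and $\nbluep$ drop the monomorphism condition entirely, nothing beyond surjectivity of $\trait\adj$-type maps is required, and that is immediate.
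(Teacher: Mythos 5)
Your part (a) is correct, and it takes a genuinely different (and in fact cleaner) route than the paper: you build the lift functorially as $X(i)\mapsto\bigl(X(i),\ \N[X(i)]\to\N[X(i)]\otimes_{A\adj}R\bigr)$, i.e.\ you compose $X$ with an explicit ``free'' lift given by base change along the surjection $A\adj\to R$; surjectivity of the second component and functoriality are then automatic, since base change is a functor and epimorphisms are preserved. The paper instead constructs a lift by a transfinite saturation process, repeatedly enlarging the congruence on $\N[X_i]$ by the relations defining $R$ and by relations pushed forward along the maps of the diagram. Your construction buys strict functoriality for free and avoids the induction; the paper's buys a ``minimal'' quotient adapted to the given diagram. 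For the statement as phrased, either works.

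Part (b), however, has a genuine gap: you have reversed the sieve condition. A sieve $I_0\subset I$ is closed under precomposition, so what is excluded are arrows from $I\setminus I_0$ \emph{into} $I_0$; arrows \emph{leaving} $I_0$ are allowed, and they are exactly the case the lemma is meant to handle. This is visible in the paper's own use of (b): in Remark \ref{extr} one has $I=(a\rightrightarrows b)$, $I_0=\{a\}$ (a sieve), a given lift $(A,A\adj\to R)$ at $a$, and one must produce $(B,B\adj\to S)$ together with lifts of the two arrows $A\to B$ --- arrows going out of $I_0$. Your construction assigns to each object outside $I_0$ the free lift $\sigma(X(j))=(X(j),X(j)\adj = X(j)\adj)$ and only provides transition maps for arrows from outside into $I_0$ (via the adjunction $\sigma\dashv\rho$), so it never treats the configuration that actually occurs. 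Worse, it cannot be patched by keeping $\sigma(X(j))$: a morphism $(A,A\adj\to R)\to(B,B\adj = B\adj)$ in $\nbluep$ exists only when the congruence defining $R$ becomes trivial after applying $f\adj$, which fails in general --- e.g.\ take $A=\N\cup\{-\infty\}$, $R=A\adj/(2T=1)$ as in Example \ref{example1}, $B=A$ and $f=\mathrm{id}$: there is no semiring map $R\to A\adj$ compatible with the identity on $A$.

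The correct mechanism (and the paper's) is dual to freeness: for objects outside the sieve one must \emph{quotient}, defining $S$ as the quotient of $X(j)\adj$ by the congruence generated by the images, under all arrows of the diagram reaching $j$, of the relations already present on the lifted part, and then saturating (iterating over chains of arrows, transfinitely if need be) so that every arrow descends to the quotients; this always stays in $\nbluep$ because only the epimorphism condition is required. The sieve hypothesis is used precisely to guarantee that this propagation of relations never flows back into $I_0$, where the lift is prescribed and may not be altered --- the opposite of the role you assigned to it.
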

\begin{proof}
{\rm (a)} Let $X\colon I\to A-\Mod$ be a diagram.
For each object $i$ of $I$, consider the $(A,A\adj\to R)$-module $(X_i,\N [X_i]\to M_i^0)$,
 where $M_i^0$ is the quotient of $\N [X_i]$ by the equivalence relation generated by $am=bm$, for each $m\in\N [X_i]$ and for each 
 pair $(a,b)$ in the relation defining the quotient $R$.\\
By induction,  the $(A,A\adj\to R)$-module $(X_i,\N [X_i]\to M_i^{\alpha+1})$ is defined by setting
 $M_i^{\alpha+1}$ to be the quotient of $\N [X_i]$ by the equivalence relation generated by the equations defining $M_i^\alpha$ and by the equations $\N [f]m=\N [f]n$, where  $f\colon X_j\to X_i$ is any map in the diagram and where  $m=n$ w.r.t.~the relation defining $M_j^\alpha$. When $\alpha$ is a limit ordinal, $M_i^\alpha$ is defined as the obvious colimit 
 $\dirlim_{\beta <\alpha} M_i^\beta$. Finally, let $M_i = \dirlim_{\alpha} M_i^\alpha$. It is clear that the diagram $X$ can be lifted in a unique way to a diagram $(X_i,\N [X_i]\to M_i)$.\\
{\rm (b)} The proof is analogous to that of point (a).
\end{proof}
\begin{rmark}\label{extr}
A particular case of Lemma \ref{extm}(b) is the following. Given an object $(A,A\adj\to R)$ of $\nbluep$, any diagram $\xymatrix{A \ar@<.5ex>[r]^f \ar@<-.5ex>[r]_g &B}$ 
can be lifted (w.r.t.~$\rho$) to a diagram 
$\xymatrix{(A,A\adj\to R) \ar@<.5ex>[r] \ar@<-.5ex>[r]& (B,B\adj\to S)}$.
\end{rmark}

\begin{rmark} Should one admit the existence of the zero monoid  and of the zero ring (i.e.~the possibility that $0=1$), in the proof of Lemma \ref{extm} it would be enough to set $M_i = 0$ and $S=0$, respectively
\end{rmark}

\begin{prop}\label{rhopreservesZariski} The functor  $\rho\colon \aff_{\Set_\ast} \to \aff_{\nbcat}$ preserves Zariski covers.
\end{prop}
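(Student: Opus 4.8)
Recall that $\rho$ sends a blueprint $(A,A\adj\to R)$ to its underlying monoid $A$, so on affine schemes it takes $\spec(A,A\adj\to R)$ to $\spec A$; the claim amounts to showing that the image of a Zariski cover is again a Zariski cover. The plan is to unwind a Zariski cover $\{\spec W_j\to\spec V\}_{j\in J}$ into its two defining ingredients --- that it be a flat cover and that each leg be an open Zariski immersion --- and to transfer each ingredient separately. Writing $V=(A,A\adj\to R)$, $W_j=(B_j,(B_j)\adj\to S_j)$ and denoting by $A\to B_j$ the underlying monoid maps, I must check that $\rho$ preserves: \emph{(1)} flatness of each leg; \emph{(2)} the existence of a finite subfamily $J'\subset J$ for which $\prod_{j\in J'}\trait\otimes_A B_j$ is conservative; \emph{(3)} epimorphisms; and \emph{(4)} morphisms of finite presentation. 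Points \emph{(1)} and \emph{(2)} yield preservation of flat covers, while \emph{(1)}, \emph{(3)}, \emph{(4)} together yield preservation of open Zariski immersions.

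The engine for all four points is the forgetful functor $\tilde\rho\colon\nbcat\to\Set_\ast$, which is strong monoidal, conservative, and preserves both limits and colimits (the latter because, there being no monomorphism condition in $\nbcat$, limits and colimits are computed in the first component, as in the proof of Theorem \ref{B-category}; conservativity and filtered-colimit preservation are the analogues of Lemma \ref{lemmaadjunction2}). Consequently, for each morphism $V\to W$ of $\nbluep$ the induced restriction functors $\rho_V\colon V\trait\Mod\to A\trait\Mod$ and $\rho_W\colon W\trait\Mod\to B\trait\Mod$ preserve limits, reflect isomorphisms, and fit into a base-change square
\[
\xymatrix{
V\trait\Mod \ar[r]^{\rho_V} \ar[d]_{\trait\otimes_V W} & A\trait\Mod \ar[d]^{\trait\otimes_A B} \\
W\trait\Mod \ar[r]_{\rho_W} & B\trait\Mod
}
\]
that commutes up to natural isomorphism (strong monoidality together with colimit-preservation makes $\tilde\rho$ commute with relative tensor products). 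The second crucial input is Lemma \ref{extm}(a): every diagram in $A\trait\Mod$ lifts, on the nose, to a diagram in $V\trait\Mod$.

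With these in hand, \emph{(1)}, \emph{(2)}, \emph{(3)} are formal diagram chases. For \emph{(1)}, given a flat leg $V\to W$ and a finite diagram $D$ in $A\trait\Mod$, I would lift $D$ to $\tilde D$ with $\rho_V\tilde D=D$; since $\rho_V$ preserves limits, $\lim\tilde D$ lies over $\lim D$, and applying exactness of $\trait\otimes_V W$ upstairs, then $\rho_W$ and the commuting square, gives $(\lim D)\otimes_A B\cong\lim(D\otimes_A B)$, i.e. $\trait\otimes_A B$ is exact. For \emph{(2)}, if a morphism $\phi$ of $A\trait\Mod$ becomes invertible after $\trait\otimes_A B_j$ for all $j\in J'$, I lift it to $\tilde\phi$; the square and conservativity of each $\rho_{W_j}$ force $\tilde\phi\otimes_V W_j$ to be invertible, whence $\tilde\phi$ is an isomorphism and so is $\phi=\rho_V\tilde\phi$. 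For \emph{(3)} I would use the monoid-object analogue of the standard fact that $V\to W$ is an epimorphism exactly when restriction $W\trait\Mod\to V\trait\Mod$ is fully faithful, i.e. the counit $\mathrm{res}(Q)\otimes_V W\to Q$ is invertible for every $W$-module $Q$; lifting an arbitrary $B$-module and transporting this isomorphism through $\rho_W$ and the square yields full faithfulness of $B\trait\Mod\to A\trait\Mod$, hence that $A\to B$ is an epimorphism.

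The one genuinely delicate point is \emph{(4)}, preservation of finite presentation, and this is where the passage from $\Bcat$ to $\nbcat$ earns its keep. Finite presentation of $V\to W$ means that $\Hom_{V/\nbluep}(W,\trait)$ commutes with filtered colimits, and I would transfer it by lifting a filtered diagram $\{Y_i\}$ of $A$-algebras to a filtered diagram of $V$-algebras via Lemma \ref{extm}(b) (cf.\ Remark \ref{extr}), exploiting that $\rho$ is faithful --- a blueprint morphism being determined by its underlying monoid morphism --- and that $\rho$ preserves filtered colimits to identify $\dirlim\Hom_{A/\Mon_0}(B,Y_i)$ with $\Hom_{A/\Mon_0}(B,\dirlim Y_i)$. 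The subtlety is that this comparison of $\Hom$-sets is \emph{not} an identity of representable functors but must be extracted from the lifts, and it is precisely the monomorphism-free setting of $\nbcat$ that guarantees that the required lifts of modules, relations, and filtered systems exist; this is the step I expect to cost the most work. The example preceding the statement illustrates how badly the underlying-monoid functor can behave on colimits before the enlargement, and thus why Lemma \ref{extm} --- hence the category $\nbcat$ --- is indispensable to make all four transfers go through.
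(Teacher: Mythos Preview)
Your proposal is correct and follows essentially the same approach as the paper: the same four-point decomposition into flatness, existence of a finite conservative subfamily, epimorphisms, and finite presentation, with Lemma~\ref{extm} as the common engine throughout. The only noteworthy variation is in point~(3), where you pass through the module-theoretic characterization of epimorphisms (restriction $W\trait\Mod\to V\trait\Mod$ fully faithful) and then lift $B$-modules via Lemma~\ref{extm}(a), whereas the paper argues more directly by lifting a pair of parallel arrows $B\rightrightarrows C$ via Remark~\ref{extr} and invoking that $\nbluep$-morphisms are determined by their underlying monoid maps; both routes work, the paper's being a touch shorter.
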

\begin{proof}
Let  
$$\left\{ \spec (A_i, A_i\adj\to R_i) \to \spec (A, A\adj \to R)\right\}_{i\in I}$$
be any  Zariski cover  in the category $\aff_\nbcat$.
We have to prove that $\{ \spec A_i\to\spec A\}_{i\in I}$ is a Zariski cover in $\aff_{\Set_\ast}$. To do that, by taking into account \cite[D\'ef.~2.10]{TV}, we have to check the following four points:
\begin{enumerate}
\item To show that, for each $i$, $\spec A_i\to\spec A$ is flat, that is  that
$$
\trait\otimes_AA_i\colon A-\Mod\to A_i-\Mod
$$
is exact. By applying Lemma \ref{extm}(a) to any finite diagram, this follows from the flatness of the morphism $\spec (A_i, A_i\adj\to R_i) \to \spec (A, A\adj \to R)$ and from the fact that $\rho$ preserves limits, being a right adjoint.
\item To show that there is a finite subset $J\subset I$ such that
$$
\prod_{j\in J}\trait\otimes_AA_j\colon A-\Mod\to\prod_{j\in J}A_j-\Mod
$$
is conservative. This follows  from  Lemma \ref{extm}(a) in the case where $I$ is the category $\bullet \to \bullet$. 
\item To show  that $\rho$ preserves epimorphisms. This is consequence of  Lemma \ref{extm}(b) (see Remark \ref{extr}). 
\item To show that $\rho$ preserves the finite presentation property. This fact follows from  Lemma \ref{extm}(b). \end{enumerate}
\end{proof}

\begin{prop}\label{rhopreservesfinitelimits}
The functor  $\rho\colon \aff_{\Set_\ast} \to \aff_{\nbcat}$ preserves finite limits.\end{prop}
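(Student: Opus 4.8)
The plan is to reduce the statement to the preservation of two special finite limits and then to invoke Lemma~\ref{extm}. Recall (as in the proof of Proposition~\ref{rhopreservesZariski}) that $\rho$ acts by $\spec(A,A\adj\to R)\mapsto\spec A$, and that both $\aff_{\nbcat}$ and $\aff_{\Set_\ast}$ have finite limits because $\nbluep$ and $\Mon_0$ are cocomplete. Since a functor between categories with finite limits preserves all of them as soon as it preserves the terminal object and fibered products, it is enough to check these two cases. Now a fibered product $\spec(B,\dots)\times_{\spec(A,\dots)}\spec(C,\dots)$ in $\aff_{\nbcat}$ is the spectrum of the pushout of the corresponding span of blueprints, so the fibered-product case is exactly the assertion that $\rho$ sends a pushout of blueprints to the pushout of the underlying monoids.

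For the terminal object I would note that the functor $\sigma$ of~\ref{MonBluepadjuctionnew} is a left adjoint and hence preserves initial objects; since the initial object of $\Mon_0$ is $\fun$ (with $\fun\adj=\N$), the initial blueprint is $\sigma(\fun)=(\fun,\N=\N)$, and $\rho\sigma=\mathrm{id}$ gives $\rho\bigl(\sigma(\fun)\bigr)=\fun$. For pushouts, given a span $(B,B\adj\to R_B)\leftarrow(A,A\adj\to R_A)\to(C,C\adj\to R_C)$, I would first form the monoid pushout $P=B\sqcup_A C$ and then, using Lemma~\ref{extm}(b) together with Remark~\ref{extr}, lift the resulting cocone to $\nbluep$, equipping $P$ with a structure $(P,P\adj\to R_P)$ in which $R_P$ is the quotient of $P\adj$ by the relations carried over from $R_B$ and $R_C$; this lies in $\nbluep$ since $P\adj\to R_P$ is an epimorphism by construction. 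To see that it is the pushout in $\nbluep$, I would take any $(Q,Q\adj\to R_Q)$ receiving compatible blueprint morphisms from $(B,\dots)$ and $(C,\dots)$: the pushout property of $P$ in $\Mon_0$ yields a unique monoid map $w\colon P\to Q$, and because the two given morphisms respect the relations defining $R_B$ and $R_C$, the induced semiring map $P\adj\to R_Q$ respects the generating relations of $R_P$ and therefore factors uniquely through $R_P$. As a morphism of $\nbluep$ is determined by its underlying monoid map, the structure map $P\adj\to R_P$ being an epimorphism, this is the unique lift of $w$, so $(P,P\adj\to R_P)$ is the pushout and $\rho$ carries it to $P$.

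The hard part will be exactly this pushout step, and it is where the enlargement from $\bluep$ to $\nbluep$ is indispensable: in $\bluep$ the monomorphism condition (b) forces extra identifications in the underlying monoid of a colimit --- as the coequalizer computed just before Definition~\ref{def-nbluep} shows --- so that $\rho$ does not commute with finite colimits there. Dropping that condition lets the relations be adjoined freely through the transfinite construction of Lemma~\ref{extm}, so that the underlying monoid of the pushout remains the $\Mon_0$-pushout. Once the terminal object and fibered products are settled, preservation of all finite limits follows formally.
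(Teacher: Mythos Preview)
Your proposal is correct, and its overall strategy---pass to the opposite functor $\rho\colon\nbluep\to\Mon_0$ and show it preserves finite colimits---is the same as the paper's. The details differ, however. The paper reduces to finite \emph{coproducts and coequalizers} and, for each, computes the colimit directly in the ambient comma category $\trait\adj/\SRing$: since $\trait\adj$ is a left adjoint, the first component of that colimit is already the $\Mon_0$-colimit, so the only thing to check is that the structure map of the second component is still an epimorphism (so the object lies in the full subcategory $\nbluep$). This is a one-line surjectivity check in each case, and preservation by $\rho$ is then automatic.

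You instead reduce to the \emph{initial object and pushouts} and, for the pushout, invoke Lemma~\ref{extm}(b) to equip the $\Mon_0$-pushout $P$ with a blueprint structure, then verify the universal property by hand. This is valid, but the appeal to Lemma~\ref{extm}(b) is a detour: that lemma only guarantees that \emph{some} extension of the diagram exists, and to check the universal property you must anyway unpack the specific construction in its proof to see that $R_P$ is the quotient of $P\adj$ by the relations imported from $R_B$ and $R_C$---which is exactly the comma-category pushout. So what you are really doing is the paper's computation together with a redundant verification of the universal property. The paper's route is shorter; yours makes the form of the pushout more explicit and correctly isolates (in your final paragraph) why dropping condition~(b) of Definition~\ref{def-bluep} is what makes the argument go through.
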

\begin{proof}
We will show the equivalent statement that the opposite functor from $\rho\colon {\nbcat} \to {\Set_\ast}$ preserves finite colimits. As usual, it is enough to show that it preserves finite coproducts and coequalizers.\\
Let $(A,A\adj\to R)$ and $(B,B\adj\to S)$ be objects in  ${\nbcat}$ and take the coproduct $(A\coprod B,(A\coprod B)\adj\to R\oplus S)$ in the category $\trait \adj /\SRing$:  we have to show that the second component is surjective. This follows from the fact that, being $\trait\adj$ a left adjoint, one has $(A\coprod B)\adj\cong (A\adj)\oplus (B\adj)$.\\
Let $f,g\colon (A,A\adj\to R) \to (B,B\adj\to S)$. Analogously as above, the domain of the second component of the coequalizer $C$ of $f, g$ in $\trait \adj /\SRing$  is the coequalizer of
$$
f\adj ,g\adj\colon A\adj\to B\adj\,.
$$
Because of the universal property of colimits, there is a commutative diagram 
giving rise to a commutative diagram
$$
\xymatrix{
A\adj \ar@{>>}[d] \ar@<.5ex>[r] \ar@<-.5ex>[r] & B\adj \ar@{>>}[d] \ar@{>>}[r] & C \ar[d] \\
R \ar@<.5ex>[r] \ar@<-.5ex>[r] & S \ar@{>>}[r] & T}
$$
in $\SRing$, whose rows are coequalizers and where the map  
$C\to T$ is the second component of the coequalizer of $f,g$ in the category $\trait \adj /\SRing$. As the middle vertical map and the bottom right one are surjective, so is the map 
$C\to T$.
\end{proof}

Proposition \ref{rhopreservesZariski} and Proposition \ref{rhopreservesfinitelimits} entail the following result.
\begin{corol}
The  functor $\rho\colon \aff_{\Set_\ast} \to \aff_{\nbcat}$  is continuous w.r.t.~the Zariski topology, and 
the adjunction \ref{MonBluepadjuctionnew} gives rise to a geometric morphism
\begin{equation}\label{MonBluepadjuctionnew2} \xymatrix{
\sh(\aff_{\nbcat})  \ar@/_1.1pc/[r]^{\widehat\rho}  
& \sh(\aff_{\Set_\ast}) \ar@/_1.1pc/[l]_{\widehat\sigma}}
\end{equation}
\end{corol}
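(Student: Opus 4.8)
The plan is to obtain the Corollary by combining Propositions \ref{rhopreservesZariski} and \ref{rhopreservesfinitelimits} with the general site-theoretic formalism recalled in Subsection \ref{relativeschemes}. First I would note that, in the terminology used here, a functor between sites is \emph{continuous} for a given topology exactly when it carries covering families to covering families and commutes with finite limits: Proposition \ref{rhopreservesZariski} supplies the first property and Proposition \ref{rhopreservesfinitelimits} the second, both with respect to the Zariski topology, so $\rho$ is continuous with respect to the Zariski topology. This is the first assertion. (In contrast with Proposition \ref{Bschemesadjunctions}, only the Zariski topology is claimed, since Proposition \ref{rhopreservesZariski} makes no statement about flat covers.)

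For the second assertion I would invoke the standard consequence of continuity in the theory of sites. Precomposition with $\rho$ carries Zariski sheaves to Zariski sheaves --- this uses precisely that $\rho$ preserves covers together with the fibered products occurring in them, both of which are granted by Propositions \ref{rhopreservesZariski} and \ref{rhopreservesfinitelimits} --- and yields the functor $\widehat\sigma$ of \ref{MonBluepadjuctionnew2}, which on representable sheaves agrees with the functor $\sigma$ of \ref{MonBluepadjuctionnew}. Its left adjoint is obtained by sheafifying the left Kan extension along $\rho$; this is the functor $\widehat\rho$ of \ref{MonBluepadjuctionnew2}, and on representables it extends the affine-level $\rho$. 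One thus obtains the adjoint pair $\widehat\rho\dashv\widehat\sigma$.

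It then remains to check that this adjunction is a \emph{geometric} morphism, i.e.\ that the left adjoint $\widehat\rho$ is left exact; this is the only step requiring an argument, and it is where Proposition \ref{rhopreservesfinitelimits} is used once again. Since $\rho$ is left exact and the affine sites involved are finitely complete (being opposite to cocomplete categories), $\rho$ is a morphism of sites, and the associated sheafified left Kan extension $\widehat\rho$ preserves finite limits (cf.\ \cite[\S 2]{TV}). Hence the pair $(\widehat\rho,\widehat\sigma)$ is a geometric morphism $\sh(\aff_{\nbcat})\rightleftarrows\sh(\aff_{\Set_\ast})$, as claimed. I do not expect any real obstacle: the one genuinely new ingredient --- replacing $\Bcat$, over which $\rho$ fails to preserve finite limits, by $\nbcat$, over which it does --- has already been provided by Propositions \ref{rhopreservesZariski} and \ref{rhopreservesfinitelimits}, and the remainder is routine bookkeeping with the machinery of \cite{TV}.
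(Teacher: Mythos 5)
Your proposal is correct and matches the paper, which states this corollary without a separate argument as an immediate consequence of Propositions \ref{rhopreservesZariski} and \ref{rhopreservesfinitelimits} together with the standard machinery of \cite{TV}. You simply make explicit what the paper leaves implicit --- $\widehat\sigma$ as restriction along $\rho$ (sheaf-valued by cover and finite-limit preservation), $\widehat\rho$ as the sheafified left Kan extension, and left exactness of $\widehat\rho$ from that of $\rho$ --- so there is nothing to correct.
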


\begin{thm}\label{rhosigmaadjunction} The functor $\widehat\rho\colon \sh(\aff_{\nbcat}) \to \sh(\aff_{\Set_\ast})$
preserves the subcategories of schemes and so induces a functor 
\begin{equation}
\widehat\rho \colon \sch_{\nbcat} \to \sch_{\Set_\ast}\,.
\end{equation}
Hence, the adjunction \ref{MonBluepadjuctionnew2} induces an adjunction 
$\widehat\rho \dashv \widehat\sigma\colon \sch_{\nbcat} \to \sch_{\Set_\ast}$.
\end{thm}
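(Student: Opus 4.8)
The plan is to prove the two assertions of the theorem separately; the first---that $\widehat\rho$ carries $\nbcat$-schemes to monoidal schemes---contains essentially all the work, the second being formal.

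For the first assertion I would rely on three properties of $\widehat\rho$. By the preceding Corollary, $\widehat\rho$ is the inverse-image part of a geometric morphism, so it preserves finite limits and all colimits; in particular it preserves monomorphisms, sheaf epimorphisms, and coproducts. Secondly, being the sheafified left Kan extension of $\rho$, the functor $\widehat\rho$ sends representable sheaves to representable sheaves, with $\widehat\rho(\spec(A,A\adj\to R))=\spec A$; in particular it takes affine $\nbcat$-schemes to affine monoidal schemes. Thirdly, the proof of Proposition \ref{rhopreservesZariski} shows in particular that $\rho$ preserves flat morphisms, epimorphisms, and morphisms of finite presentation between affine schemes, hence takes an open Zariski immersion of affine $\nbcat$-schemes to an open Zariski immersion of affine monoidal schemes (Definition \ref{definitionsofcovers}(b)); combined with the fact that a Zariski open of an affine scheme is the image of a coproduct of such immersions (Definition \ref{definitionsofcoversforsheaves}(a)) and with the first property, this gives that $\widehat\rho$ sends a Zariski open of an affine $\nbcat$-scheme $\Xi$ to a Zariski open of the affine scheme $\rho(\Xi)$.

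With these in hand I would argue as follows. Let $\cF\in\sch_\nbcat$ with an affine Zariski cover $\{\Xi_i\to\cF\}_{i\in I}$; by Definition \ref{definitionsofcoversforsheaves}(c) each $\Xi_i\to\cF$ is an open Zariski immersion and $\coprod_i\Xi_i\to\cF$ is a sheaf epimorphism, so $\cF$ is the colimit of $\coprod_{i,j}\Xi_i\times_\cF\Xi_j\rightrightarrows\coprod_i\Xi_i$. Applying the exact functor $\widehat\rho$, one finds that $\widehat\rho(\cF)$ is the colimit of $\coprod_{i,j}(\rho\Xi_i\times_{\widehat\rho\cF}\rho\Xi_j)\rightrightarrows\coprod_i\rho\Xi_i$, that $\coprod_i\rho\Xi_i\to\widehat\rho(\cF)$ is a sheaf epimorphism, and that $\rho\Xi_i\times_{\widehat\rho\cF}\rho\Xi_j\cong\widehat\rho(\Xi_i\times_\cF\Xi_j)$. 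Since $\Xi_j\to\cF$ is an open Zariski immersion, $\Xi_i\times_\cF\Xi_j$ is a Zariski open of the affine scheme $\Xi_i$, so by the third property above $\rho\Xi_i\times_{\widehat\rho\cF}\rho\Xi_j$ is a Zariski open of the affine scheme $\rho\Xi_i$. Thus $\widehat\rho(\cF)$ is, in the sense of \cite[\S 2]{TV}, glued from the affine monoidal schemes $\rho\Xi_i$ along Zariski open subschemes, hence lies in $\sch_{\Set_\ast}$ with $\{\rho\Xi_i\to\widehat\rho(\cF)\}$ an affine Zariski cover, and $\widehat\rho$ restricts to a functor $\widehat\rho\colon\sch_\nbcat\to\sch_{\Set_\ast}$.

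For the second assertion, the argument of Proposition \ref{Bschemesadjunctions2} carries over verbatim with $\nbcat$ in place of $\Bcat$ and shows that $\widehat\sigma$ restricts to $\widehat\sigma\colon\sch_{\Set_\ast}\to\sch_\nbcat$; then, $\sch_\nbcat$ and $\sch_{\Set_\ast}$ being full subcategories of the respective sheaf categories and both $\widehat\rho$ and $\widehat\sigma$ preserving them, the adjunction isomorphism of \ref{MonBluepadjuctionnew2} restricts to a natural isomorphism $\Hom_{\sch_{\Set_\ast}}(\widehat\rho\cF,\cG)\cong\Hom_{\sch_\nbcat}(\cF,\widehat\sigma\cG)$, which is the asserted adjunction $\widehat\rho\dashv\widehat\sigma$. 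I expect the only delicate point to be the third property above together with its use in the gluing step: one must check carefully that $\rho$ really preserves flatness, epimorphisms and finite presentation between affines (the content already supplied by the proof of Proposition \ref{rhopreservesZariski}) and that exactness of $\widehat\rho$ transports the colimit presentation of a $\nbcat$-scheme to that of a monoidal scheme glued along Zariski opens; the remaining steps are formal.
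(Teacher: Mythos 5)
Your proposal is correct and follows essentially the same route as the paper: both arguments rest on $\widehat\rho$ preserving colimits (as a left adjoint), finite limits (Proposition \ref{rhopreservesfinitelimits}), affine schemes, and Zariski opens of affines (via Lemma \ref{extm} and Proposition \ref{rhopreservesZariski}), and then conclude by the gluing criterion of To\"en--Vaqui\'e, which the paper invokes directly as \cite[Prop.~2.18]{TV} while you unpack it through the kernel-pair coequalizer presentation; the treatment of $\widehat\sigma$ and of the restricted adjunction is likewise the same.
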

\begin{proof}
We already proved that $\widehat\sigma$ preserves the relevant subcategory of schemes in Proposition \ref{Bschemesadjunctions2}. So all we have to prove is that $\widehat\rho$ preserves the relevant subcategory of schemes.
In view of \cite[Proposition 2.18]{TV}, it suffices to observe that the following properties of $\widehat\rho$ are satisfied:
\begin{itemize}
\item it preserves coproducts (for it is a left adjoint), and affine schemes;
\item it preserves finite limits (by Proposition \ref{rhopreservesfinitelimits}) and Zariski opens of affine schemes (by Lemma \ref{extm}(b) and by the fact that  $\widehat\rho$  preserves finite limits);
\item it preserves images (since it preserves finite limits and colimits) and diagonal morphisms;
\item it preserves quotients, since it preserves colimits.
\end{itemize}
\end{proof}

\begin{definition}\label{def-nbschemes}
 A scheme $\Sigma$ in $\sch_{\nbcat}$ that admits a Zariski  cover by affine schemes in $\aff_{\Bcat}$
will be called (by a slight abuse of language) a $\nbcat$-scheme. The category of such schemes will be denoted 
by $\widetilde{\sch_{\nbcat}}$.
\end{definition}
The rationale behind this definition is that, while $\nbcat$-schemes retain all good local properties of $\Bcat$-schemes (namely, the properties of blueprints), one gains the advantages of working  in the wider and more comfortable environment of the category 
$\sch_{\nbcat}$. 

Notice that the adjunction in Theorem \ref{rhosigmaadjunction} obviously restrict to an adjunction 
\begin{equation}\label{adjunctionfornbschemes}
\widehat\rho \dashv \widehat\sigma\colon \widetilde{\sch_{\nbcat}} \to \sch_{\Set_\ast}\,.
\end{equation}
Morevover, one can define a functor 
\begin{equation}\label{Zbluefunctor0}
\xymatrix{\sch_{\Bcat} \ar[r]^{\widehat{F}_\Z} &\sch_{\Ab}}\,,
\end{equation}
obtained by composing the functor $\widehat{F}\colon \sch_{\Bcat} \to \sch_{\Mon_0}$ in eq.~\ref{Bschemesadjunctionseq} with the functor 
$$\trait\otimes_{\N} \Z\colon \sch_{\Mon_0} \to \sch_{\Ab}$$ defined in \cite[Prop.~3.4]{TV}. Of course, this functor restricts to a functor
\begin{equation}\label{Zbluefunctor}
\xymatrix{\widetilde{\sch_{\Bcat}} \ar[r]^{\widehat{F}_\Z} &\sch_{\Ab}}\,.
\end{equation}

A $\nbcat$-scheme  gives rise, through the functors $\widehat\rho$ and $\widehat{F}_\Z$, to a pair consisting of a monoidal scheme and a classical scheme.
\begin{definition}\label{defBscheme}
Given a $\nbcat$-scheme $\Sigma$, we set
\begin{itemize}
\item $\Sigma_\Z : = \widehat{F}_\Z(\Sigma)$, which is an object of $\sch_\Ab$ (i.e.~a classical scheme); 
\item $\underline{\Sigma}: =\widehat{\rho}(\Sigma)$, which is an object of $\sch_{\Set_\ast}$ (i.e.~a monoidal scheme).
\end{itemize}
\end{definition}
There is a natural transformation  $\Sigma_\Z\to \underline{\Sigma}\adjz$, which is obtained via the unit of the adjunction $\widehat{\rho}\dashv\widehat{\sigma}$  and by applying the functor $\widehat{F}_\Z$.  
By definition, there is indeed a map
\[
\widehat{F}_\Z \Sigma\to \widehat{F}_\Z\widehat{\sigma}\,\widehat{\rho}\, \Sigma\cong\underline{\Sigma}\adjz\,,
\]
where the isomorphism is given by the natural isomorphism $\widehat{F}_\Z\circ\widehat{\sigma}=\trait\adjz$.

In the affine case, such a map is simply realized as the bottom arrow of the map between arrows
\[
\xymatrix{
A\adjz \ar[d] \ar[r] & A\adjz \ar[d] \\
A\adjz \ar[r] & R\otimes_{\N} \Z
}
\]
where the top and the left map are identities.

Summing up, a $\nbcat$-scheme $\Sigma$ induces therefore the following objects:
\begin{align}
\bullet\ &\text{a monoidal scheme $\underline{\Sigma}$;}\label{Bschemesdata1} \\
\bullet\ &\text{a (classical) scheme $\Sigma_\Z$ over $\Z$;}\label{Bschemesdata2}\\
\bullet\ &\text{a natural transformation} \
\Lambda\colon \Sigma_\Z\to\underline{\Sigma}\circ |\trait |\cong\underline{\Sigma}\adjz\,.\label{structuralmapBscheme}
\end{align}
We shall say that the $\nbcat$-scheme $\Sigma$ {\it generates} the pair $(\underline\Sigma, \Sigma_\Z)$, the natural transformation \ref{structuralmapBscheme} being omitted.

\section{An application: $\nbcat$-schemes and $\fun$-schemes}\label{SectionFinal2}
The geometric data \ref{Bschemesdata1}, \ref{Bschemesdata2}, \ref{structuralmapBscheme} appear to be similar to (but different from) those used by A.~Connes and C.~Consani \cite{CC} in their definition of $\fun$-scheme, which is as follows.
\begin{definition}\label{CCschemes}\cite[Def.~4.7]{CC} 
An $\fun$-scheme is a triple $(\underline{\Xi}, \Xi_\Z, \Phi)$, where
\begin{enumerate}
\item $\underline{\Xi}$ is a monoidal scheme;
\item  $\Xi_\Z$ is a (classical) scheme;
\item $\Phi$ is a natural transformation $\underline{\Xi}\to \Xi_\Z\circ (\trait\adjz)$, such that the induced natural transformation 
$\underline{\Xi}\circ \vert\trait\vert \to \Xi_\Z$, when evaluated on fields, gives isomorphisms (of sets).\!\footnote{In \cite{CC} the functor  $\trait\adjz$ is denoted by $\beta$ and its right adjoint $|\trait |$ by 
$\beta^\ast$.} 
\end{enumerate}
\end{definition}

A manifest  difference between $\nbcat$-schemes and $\fun$-schemes is, of course, the direction of the natural transformation linking the monoidal scheme and the classical scheme. Moreover, 
the condition on $\Phi$ in Definition \ref{CCschemes}(3) may fail to be fulfilled in the case of $\nbcat$-schemes, as shown by the following example.
\begin{ex}
Consider a pair $(A,R\to A\adjz)$ defining an affine $\fun$-scheme in the sense Definition \ref{CCschemes}.
Notice that, in this case, the natural transformation $\Phi$ calculated on a field $k$ corresponds to mapping a prime ideal $\mathfrak p$ of $A\adjz$ plus an immersion $A\adjz/{\mathfrak p}\hookrightarrow k$ to their restrictions to $R$; the requirement is that this is a bijection.

On the other hand, according to the general idea underlying the notion of blueprint, if the pair $(A,R)$ is associated with an affine $\Bcat$-scheme (which is, of course, the same thing as an affine $\nbcat$-scheme), then the ring $R$ encodes the information of a relation $\mathcal{R}$ intended to \it reduce\rm\ the number of ideals of $A$. Take for instance the case $(A,A\adjz\to R)$, with $A=\N\cup\{\ -\infty\}$ (additive notation) and $R=A \adjz/(2T-1)$. Then,  $\N$ is an ideal not coming from any ideal of $R$, since  $T$ is invertible (in more algebraic terms, we are saying that the map to any field $k$ sending $T$ to 0 can not be lifted to a map from $R$ to $k$).
\end{ex}

The category $\widetilde{\sch_{\nbcat}}$ and  that of $\fun$-schemes may be combined into a larger category.
\begin{definition}\label{definitionF1schemewr} The category of  ${\fun}$-schemes with relations is the fibered product 
of the category $\widetilde{\sch_{\nbcat}}$ of $\nbcat$-schemes and that of $\fun$-schemes over the category of monoidal schemes. Thus, a $\nbcat$-scheme
$\Sigma$ generating the pair $(\underline\Sigma, \Sigma_\Z)$ and an $\fun$-scheme 
$(\underline{\Sigma}, \Sigma_\Z', \Phi)$ will
determine a ${\fun}$-scheme with relations denoted by the quadruple $(\underline{\Sigma}, \Sigma_\Z, \Sigma_\Z', \Phi)$.
\end{definition}

\noindent
\begin{rmark}\label{finalremark}
Recall that the aim of Lorscheid's definition of blueprint is to increase the amount of closed subschemes of a monoidal scheme. If we loosely refer to the features of the underlying topological space as ``shape'' of the scheme, we could say that  the category of $\Bcat$-schemes (or that of $\nbcat$-schemes) adds ``extra shapes'' to Deitmar's category of monoidal schemes.

Consider now $\fun$-schemes, and let us restrict our attention to the affine case. So, we just have a ring $R$, a monoid $M$, and a map $R\to M\adjz$. Since it is required, by definition, that points remain the same, the monoid is not enriched with ``extra shapes''. However, if we think of the given map as a restriction map between the spaces of functions of the  affine schemes $ M\adjz$ and $R$, we can interpret the datum of the $\fun$-scheme as an enlargement of the space of functions of the affine monoidal scheme $M$.

In conclusion, an $\fun$-scheme with relation, according to the definition \ref{definitionF1schemewr}, allows us both
to add ``extra shapes" to the underlying monoidal scheme and to enlarge its space of functions.

As an example, consider the affine $\fun$-scheme with relation given by the free monoid on four generators and the data
\[
\Z [T_1, T_2, T_3, T_4,\varepsilon ]/(\varepsilon^2)\to\Z [T_1, T_2, T_3, T_4]\to\Z [T_1, T_2, T_3, T_4] / (T_1 T_4 - T_2 T_3 -1)\,.
\]
The $\Bcat$-scheme component on the right has been already taken into consideration in the Introduction; the $\fun$-scheme component on the left adds a nilpotent component to the ring of functions.
\end{rmark}


Notice that the classical scheme $\Sigma_\Z$ is derived from the $\nbcat$-scheme $\Sigma$ via the functor $\widehat{F}_\Z\colon \bluep \to \Ring$ (Definition \ref{defBscheme}). This means, in particular, that the affine $\Bcat$-scheme 
$\Sigma= (M, M\otimes_{\fun} N \to R)$ generates the  affine classical scheme $\Sigma_\Z= R\otimes_\N \Z$. So Definition \ref{definitionF1schemewr} indicates that, as long as we wish to investigate a relationship between this affine $\Bcat$-scheme with an $\fun$-scheme and its associated affine classical scheme $\Sigma'_\Z$,  we are no longer
concerned with the ``monoid relations'' given the map $M\otimes_{\fun} \N\to R$, but  only with the ``ring relations'' given by the map $M\otimes_{\fun} \Z \to R\otimes_\N \Z$ (cf.~eq.~\ref{structuralmapBscheme}). 

From this viewpoint it appears more natural to work with blueprints with ``ring relations''. 
More precisely, 
consider the functor
\[
\trait\adjz: \Mon_0\to \Ring\, 
\]
which is the left adjoint to the forgetful functor, and consider the category $(\trait\adjz)/\Ring$.
We shall denote by  $\Z\trait\bluep$ the full subcategory of $(\trait\adjz)/\Ring$ formally defined in the same way as the subcategory blueprints $\bluep$ of $\trait\adj/\SRing$. Analogously, one defines the category $\Z\trait\nbluep$.
A $\Z\trait\nbcat$-scheme is then a {\it scheme in $\sch_{\Z\trait\nbluep}$  that admits a Zariski  cover by affine schemes in $(\Z\trait\bluep)^{\text{\rm op}}$.}
We shall adopt hereafter the following terminological convention.
\begin{convention}\label{convention} {\it In what follows, by $\nbcat$-scheme we mean a  $\Z\trait\nbcat$-scheme, and by ${\fun}$-scheme with relations we mean the combination  
of a $\Z\trait\nbcat$-scheme and an $\fun$-scheme in the sense of Definition  \ref{definitionF1schemewr}}.
\end{convention}

Now, Definition \ref{defBscheme} and Definition \ref{CCschemes} imply that, for every ${\fun}$-scheme with relations 
$(\underline{\Sigma}, \Sigma_\Z, \Sigma_\Z', \Phi)$,  
there is a natural transformation $\Psi_1 \colon \Sigma_\Z  \to \Sigma_\Z'$ given by the composition
\begin{equation}\label{firsttransferringmap}
\xymatrix{&\Sigma_\Z \ar[r]^{\Lambda\ \ } &\underline{\Sigma}\circ |\trait | \ar[r]^{\  \ \Phi} &\Sigma_\Z'}\,,
\end{equation}
which will be called the  {\em first transferring map} determined by the given $\fun$-scheme with relations.
As its name would suggest, the natural transformation $\Psi_1$, loosely speaking, conveys information on about how many ``points" of  $\Sigma_\Z'$
are compatible with the $\nbcat$-scheme  that generates  the pair $(\underline\Sigma,\Sigma_\Z)$. Actually, there is a different way to ``transfer'' this information from the $\nbcat$-scheme to the $\fun$-scheme associated with the fibered object
$(\underline{\Sigma}, \Sigma_\Z, \Sigma_\Z', \Phi)$. 

The counit of the adjunction $\trait\adjz\dashv |\trait |$ induces a map 
\begin{equation}\label{Bmap1}
\underline{\Sigma}\circ |\trait |\to\underline{\Sigma}\circ ||\trait |\adjz|\,.
\end{equation} 
Moreover, the natural transformation \ref{structuralmapBscheme} induces a map
\begin{equation}\label{Bmap2}
\Sigma'_\Z\circ (|\trait |\adjz)\to\underline{\Sigma}\circ ||\trait |\adjz | \,.
\end{equation}
Let $\Sigma'_{\Bcat}$ be the sheaf on the category $\Ring$ obtained as the pullback of the maps \ref{Bmap1} and \ref{Bmap2}, i.e.
\begin{equation}\label{diagramdefiningSigma'Bcat}
\xymatrix{\Sigma'_{\Bcat} \ar[r] \ar[d] & \Sigma'_\Z\circ (|\trait |\adjz)\ar[d]\\
\underline{\Sigma}\circ |\trait | \ar[r] & \underline{\Sigma}\circ ||\trait |\adjz|
}
\end{equation}
By composing the vertical arrow on the left with $\Phi$, we get 
a  natural transformation 
\begin{equation} \Psi_2\colon \Sigma'_{\Bcat}\to \Sigma'_\Z\,,
\end{equation} 
which will be called the {\em second transferring map} determined by
the $\fun$-scheme $(\underline{\Sigma}, \Sigma_\Z, \Sigma_\Z', \Phi)$.
 
In the case of an $\fun$-scheme $(\underline{\Sigma}, \Sigma_\Z', \Phi)$, the natural transformation $\Phi$ induces an isomorphism $\underline\Sigma(\vert{\mathbb K}\vert) \simeq \Sigma_\Z'(\mathbb K)$ for every field $\mathbb K$.
Since for the finite field $\fq$, one has $\vert\fq\vert = {\mathbb F}_{1^{q-1}}$, it immediately follows, as observed in \cite{CC}, that there is a bijective correspondence between the set of $\fq$-points of  $\Sigma_\Z'$ and the set of 
${\mathbb F}_{1^{q-1}}$-points of $\underline\Sigma$; in others words, one has 
\begin{equation}\label{countingpointsofF1schemes} 
\#\Sigma_\Z'(\fq)= \# \underline\Sigma({\mathbb F}_{1^{q-1}})\,.
\end{equation}
This result can be extended to our setting in two different ways, because, for a $\nbcat$-scheme underlying an $\fun$-scheme with relations, we can think of its ``${\mathbb F}_{1^{q-1}}$-points'' in two different senses.

On the one hand, the forgetful functor
$\vert\trait\vert : \Ring \to \Mon_0$ admits the obvious factorization  
\begin{equation}\xymatrix{\Ring \ar[r]^{G_\Z} & \Z\trait\bluep \ar[r]^\rho &\Mon_0}\,,
\end{equation}
(cf.~eq.~\ref{MonBluepadjuction}).  Clearly, one has 
$$G_\Z(\fq)= (\mathbb{F}_{1^{q-1}}, \mathbb{F}_{1^{q-1}}\adjz \to \mathbb{F}_{1^{q-1}})$$
and $\rho(G_\Z(\fq)) = \vert \fq\vert = \mathbb{F}_{1^{q-1}}$. 
Now, by definition, the first transferring map $\Psi_1$ factorises as $\Psi_1 = \Phi\circ \Lambda$. Since $\Phi$ gives isomorphisms (of sets) when evaluated on fields and 
$\Lambda$ is always locally injective, it is immediate to prove the following result.
\begin{prop}\label{propfirsttransferringmap} Let $(\underline{\Sigma}, \Sigma_\Z, \Sigma_\Z', \Phi)$  be an $\fun$-scheme with relations. The first transferring  map $\Psi_1\colon \Sigma_\Z\to \Sigma_\Z'$,  when evaluated on a field, gives an injective map (of sets). 
In particular, the set of $G_\Z(\fq)$-points of the underlying $\nbcat$-scheme naturally injects into the set of  $\mathbb{F}_q$-points of the scheme $\Sigma''_\Z$ (which is isomorphic to the set of  ${\mathbb F}_{1^{q-1}}$-points of the monoidal scheme $\underline\Sigma$).
\end{prop}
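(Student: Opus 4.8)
The plan is to spell out the two facts cited just before the statement — that $\Phi$ restricts to a bijection on field-valued points and that $\Lambda$ is ``locally injective'' — and then combine them. Recall from eq.~\ref{firsttransferringmap} that $\Psi_1=\Phi\circ\Lambda$; since a composite of injections of sets is injective, it suffices to check that for every field $\mathbb K$ both $\Lambda(\mathbb K)$ and $\Phi(\mathbb K)$ are injective. For $\Phi$ this is nothing but Definition \ref{CCschemes}(3): the induced natural transformation $\underline\Sigma\circ|\trait|\to\Sigma_\Z'$ evaluated on $\mathbb K$ is an isomorphism of sets, hence injective.

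For $\Lambda\colon\Sigma_\Z\to\underline\Sigma\adjz$ I would argue locally. Choose an affine Zariski cover $\{\spec B_i\}$ of the underlying $\nbcat$-scheme $\Sigma$ by affine $\Z\trait\Bcat$-schemes $B_i=(M_i,M_i\adjz\to R_i)$. Applying $\widehat\rho$ (Theorem \ref{rhosigmaadjunction}) and passing to the associated classical schemes produces an affine cover $\{\spec M_i\adjz\}$ of $\underline\Sigma\adjz$, while applying $\widehat F_\Z$ produces an affine cover $\{\spec R_i\}$ of $\Sigma_\Z$; moreover, by the affine description of $\Lambda$ given after eq.~\ref{structuralmapBscheme}, the restriction of $\Lambda$ over $\spec M_i\adjz$ is $\spec$ of the structural ring homomorphism $M_i\adjz\to R_i$. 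By the definition of $\Z\trait\Bcat$ (Definition \ref{def-nbluep}, Convention \ref{convention}) this homomorphism is an epimorphism of rings, so each $\spec R_i\to\spec M_i\adjz$ is a monomorphism of schemes; since being a monomorphism of schemes is local on the target, $\Lambda$ is a monomorphism of schemes, and therefore injective on $T$-valued points for every scheme $T$, in particular on $\mathbb K$-valued points. This establishes the first assertion. (Equivalently, without invoking monomorphisms: given two $\mathbb K$-points of $\Sigma_\Z$ with the same image under $\Lambda$, that common image lies in some $\spec M_i\adjz$ because $\spec\mathbb K$ has a single point; both $\mathbb K$-points then factor through $\spec R_i$, and precomposition with the epimorphism $M_i\adjz\to R_i$ is injective.)

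For the ``in particular'' clause, specialise to $\mathbb K=\fq$ and add one elementary remark. The functor $\widehat F_\Z$ sends $\spec G_\Z(\fq)=\spec\bigl(\,{\mathbb F}_{1^{q-1}},\,{\mathbb F}_{1^{q-1}}\adjz\to\fq\,\bigr)$ to $\spec\fq$, hence carries a $G_\Z(\fq)$-point of $\Sigma$ to an $\fq$-point of $\Sigma_\Z$; this assignment is injective because $\spec{\mathbb F}_{1^{q-1}}$ is a one-point space (the monoid ${\mathbb F}_{1^{q-1}}=\mu_{q-1}\cup\{0\}$ has $\{0\}$ as its unique prime ideal), so the $G_\Z(\fq)$-point factors through one of the charts $\spec B_i$, where it is precisely a ring homomorphism $R_i\to\fq$, the monoid component $M_i\to{\mathbb F}_{1^{q-1}}=|\fq|$ being forced by it. Composing this injection with $\Psi_1(\fq)\colon\Sigma_\Z(\fq)\hookrightarrow\Sigma_\Z'(\fq)$ and with the canonical bijection $\Sigma_\Z'(\fq)\simeq\underline\Sigma({\mathbb F}_{1^{q-1}})$ afforded by $\Phi$ (cf.~eq.~\ref{countingpointsofF1schemes}) yields the asserted injection.

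The only step that needs real care is the passage from ``$\Lambda$ is $\spec$ of an epimorphism on each affine chart'' to ``$\Lambda$ is injective on field-valued points'': one must make sure the affine cover of $\Sigma_\Z$ in play is the one induced through $\widehat F_\Z$ from an affine $\Z\trait\Bcat$-cover of $\Sigma$, so that $\Lambda$ is visibly locally a monomorphism, and then use that monomorphisms of schemes are local on the target (equivalently, that $\spec\mathbb K$ has one point). Everything else is formal bookkeeping with the adjunctions $\widehat\rho\dashv\widehat\sigma$ and $F\dashv G$ already established, the identity $|\fq|={\mathbb F}_{1^{q-1}}$, and eq.~\ref{countingpointsofF1schemes}.
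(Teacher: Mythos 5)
Your proposal is correct and follows essentially the same route as the paper, which proves the proposition in one line just before its statement: $\Psi_1=\Phi\circ\Lambda$, $\Phi$ is an isomorphism of sets on fields, and $\Lambda$ is locally (on the affine charts coming from a blueprint cover) induced by the surjection $M_i\adjz\to R_i$, hence injective on field-valued points since such a point factors through a single chart. You merely spell out what the paper declares ``immediate'', including the identification of $G_\Z(\fq)$-points of $\Sigma$ with $\fq$-points of $\Sigma_\Z$ via the adjunction $F\dashv G$ and the bijection of eq.~\ref{countingpointsofF1schemes}, so the two arguments coincide in substance.
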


On the other hand, one has the immersion $\sigma\colon \Mon_0 \hookrightarrow \Z\trait\bluep$, with 
$$\sigma(\mathbb{F}_{1^{q-1}}) = (\mathbb{F}_{1^{q-1}}, \mathbb{F}_{1^{q-1}}\adjz 
{\buildrel \text{id} \over \longrightarrow} \mathbb{F}_{1^{q-1}}\adjz) \,.$$
Notice that  $G_\Z(\fq)\neq \sigma(\mathbb{F}_{1^{q-1}})$, while $\vert G_\Z(\fq)\vert = \vert \sigma(\mathbb{F}_{1^{q-1}})\vert = \mathbb{F}_{1^{q-1}}$.
\begin{thm}\label{thmsecondftransferringmap} Let $(\underline{\Sigma}, \Sigma_\Z, \Sigma_\Z', \Phi)$  be an $\fun$-scheme with relations. 
The set of $\sigma(\mathbb{F}_{1^{q-1}})$-points of the underlying $\nbcat$-scheme is in natural bijection with the set of $\mathbb{F}_q$-points of the subpresheaf of $\Sigma'_\Z$ given by the image of $\Psi_2\colon \Sigma'_{\Bcat}\to\Sigma'_\Z$.
\end{thm}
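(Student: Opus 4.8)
\emph{Strategy.} Since $|\fq|=\mathbb{F}_{1^{q-1}}$, the input we exploit is the Connes--Consani bijection behind \eqref{countingpointsofF1schemes}: the natural transformation induced by $\Phi$ restricts, on the field $\fq$, to a bijection $\Phi_\fq\colon\underline{\Sigma}(|\fq|)\xrightarrow{\ \sim\ }\Sigma'_\Z(\fq)$ (Definition~\ref{CCschemes}(3)). The plan is to build a canonical injection $\alpha\colon\Sigma(\sigma(\mathbb{F}_{1^{q-1}}))\hookrightarrow\underline{\Sigma}(\mathbb{F}_{1^{q-1}})$ and to show that $\Phi_\fq$ maps its image bijectively onto the $\fq$-points of $\operatorname{im}\Psi_2$; then $\Phi_\fq\circ\alpha$ is the required bijection. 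The map $\alpha$ is forced on us: a $\sigma(\mathbb{F}_{1^{q-1}})$-point $\spec\sigma(\mathbb{F}_{1^{q-1}})\to\Sigma$, composed with the unit $\Sigma\to\widehat{\sigma}\,\widehat{\rho}\,\Sigma=\widehat{\sigma}\,\underline{\Sigma}$, corresponds, via the adjunction $\widehat{\rho}\dashv\widehat{\sigma}$ of Theorem~\ref{rhosigmaadjunction} together with $\rho\sigma=\mathrm{id}$, to a point $\spec\mathbb{F}_{1^{q-1}}=\widehat{\rho}\,\spec\sigma(\mathbb{F}_{1^{q-1}})\to\underline{\Sigma}$, i.e.\ to an element of $\underline{\Sigma}(\mathbb{F}_{1^{q-1}})$.

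\emph{Reduction to the affine case.} The underlying space of $\spec\sigma(\mathbb{F}_{1^{q-1}})$ is that of the monoid $\mathbb{F}_{1^{q-1}}$, which (being a group with an adjoined absorbing element) has a single prime ideal; hence $\spec\sigma(\mathbb{F}_{1^{q-1}})$ is one point, so any $\sigma(\mathbb{F}_{1^{q-1}})$-point of $\Sigma$ factors through a chart of an affine cover of $\Sigma$ by spectra of blueprints, and likewise every $\fq$-point of $\Sigma'_\Z$ through an affine open. As $\alpha$, $\Phi$, the pullback \eqref{diagramdefiningSigma'Bcat} and $\Psi_2$ are assembled from natural transformations, the general statement will follow from the affine one by gluing over a common refinement of these covers. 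So I would assume $\Sigma=\spec(A,\iota\colon A\adjz\to R)$ with $(A,\iota)$ a blueprint, whence $\underline{\Sigma}=\spec A$ and $\Sigma_\Z=\spec R$, the $\fun$-scheme component being, affinely, a ring map $\varphi\colon R'\to A\adjz$ (so $\Sigma'_\Z=\spec R'$) with the Connes--Consani property.

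\emph{The affine matching.} In the affine case I would unwind Definition~\ref{def-nbschemes} and the description of $\sigma$: a $\sigma(\mathbb{F}_{1^{q-1}})$-point of $\Sigma$ is a blueprint morphism $(A,\iota)\to(\mathbb{F}_{1^{q-1}},\mathrm{id})$, i.e.\ a pair $(g\colon A\to\mathbb{F}_{1^{q-1}},\ h\colon R\to\mathbb{F}_{1^{q-1}}\adjz)$ with $h\circ\iota=g\adjz$. Since $A\adjz\to R$ is an epimorphism (Definition~\ref{def-blueprints}), $h$ is uniquely determined by $g$, so $\alpha\colon(g,h)\mapsto g$ identifies $\Sigma(\sigma(\mathbb{F}_{1^{q-1}}))$ with the set $S$ of monoid homomorphisms $g\colon A\to\mathbb{F}_{1^{q-1}}$ whose lift $g\adjz\colon A\adjz\to\mathbb{F}_{1^{q-1}}\adjz$ factors through $\iota$. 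On the other side, $\Psi_2$ is the left-hand leg of \eqref{diagramdefiningSigma'Bcat} followed by $\Phi$, hence on $\fq$-points it factors as $\Sigma'_{\Bcat}(\fq)\xrightarrow{\ \pi\ }\underline{\Sigma}(|\fq|)\xrightarrow{\ \Phi_\fq\ }\Sigma'_\Z(\fq)$; since $\Phi_\fq$ is bijective, $(\operatorname{im}\Psi_2)(\fq)=\Phi_\fq(\operatorname{im}\pi)$, and it suffices to show $\operatorname{im}\pi=S$. Here $\Sigma'_{\Bcat}(\fq)$ is a fibre product over $\underline{\Sigma}(\,||\fq|\adjz|\,)$: affinely, \eqref{Bmap1} sends $x\colon A\to\mathbb{F}_{1^{q-1}}$ to $x\adjz$, while the other leg, governed by the structural transformation $\Lambda$ of \eqref{structuralmapBscheme} (affinely, precomposition with $\iota$), records a lift $y$ over the ring $|\fq|\adjz=\mathbb{F}_{1^{q-1}}\adjz$ via $y\mapsto y\circ\iota$; hence $x\in\operatorname{im}\pi$ exactly when some $y$ satisfies $y\circ\iota=x\adjz$, which is precisely the factorization condition defining $S$, with $y$ in the role of $h$. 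Thus $(g,h)\mapsto g$ is a bijection $\Sigma(\sigma(\mathbb{F}_{1^{q-1}}))\xrightarrow{\ \sim\ }\operatorname{im}\pi$, and composing with $\Phi_\fq|_{\operatorname{im}\pi}$ gives the asserted natural bijection onto $(\operatorname{im}\Psi_2)(\fq)$; gluing the affine bijections along the cover, using naturality throughout, completes the general case.

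\emph{The main obstacle.} The delicate step is the passage from a surjection to a bijection in the affine matching: a priori several lifts $y$ of a given point of $\underline{\Sigma}$ to the ring $\mathbb{F}_{1^{q-1}}\adjz$ could witness membership in $\operatorname{im}\pi$, and one must invoke the epimorphism $A\adjz\to R$ of the blueprint to see that the witness compatible with the blueprint relations is unique, hence genuinely the component $h$ of a $\sigma(\mathbb{F}_{1^{q-1}})$-point. This is exactly the feature distinguishing $\Psi_2$ from $\Psi_1$: forgetting the witness $y$ and applying $\Psi_1=\Phi\circ\Lambda$ recovers only the injection of Proposition~\ref{propfirsttransferringmap}, whereas keeping track of it pins the image down to $(\operatorname{im}\Psi_2)(\fq)$. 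A more routine, but still necessary, point is the reduction step: checking that $\spec\sigma(\mathbb{F}_{1^{q-1}})$ is one point and that the affine covers of $\Sigma$, $\underline{\Sigma}$, $\Sigma_\Z$ and $\Sigma'_\Z$ admit a common refinement along which the affine identifications glue.
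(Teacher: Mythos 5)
Your proposal is correct and takes essentially the same route as the paper: reduce to the affine case $(A,\iota\colon A\adjz\to R)$, identify a $\sigma(\mathbb{F}_{1^{q-1}})$-point with a commutative square whose top arrow is induced by a monoid map $A\to\mathbb{F}_{1^{q-1}}$, and recognize that constraint as exactly the fibre-product condition defining $\Sigma'_{\Bcat}(\fq)$, concluding via the bijectivity of $\Phi$ on fields. The only differences are cosmetic: the paper parametrizes these points by the ring-map component $R\to\vert\fq\vert\adjz$ whereas you use the monoid-map component (equivalent because $\iota$ is an epimorphism), and you spell out the final passage through $\Phi_{\fq}$ onto the image of $\Psi_2$ and the gluing step, which the paper leaves implicit.
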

\begin{proof}
Since we can work locally, we assume that the underlying scheme is given by a monoid $M$, a ring $R$, and a map $M\adjz\to R$ satisfying the usual conditions.
An $\mathbb{F}_{1^{q-1}}$-point is given by a commutative square
\[
\xymatrix{
M\adjz \ar[d] \ar[r] & \mathbb{F}_{1^{q-1}}\adjz\cong |\mathbb{F}_q|\adjz \ar[d]^{\mathrm{id}} \\
R \ar[r] & \mathbb{F}_{1^{q-1}}\adjz\cong |\mathbb{F}_q|\adjz
}
\]
such that the arrow on the top is induced by a map $M\to\mathbb{F}_{1^{q-1}}$.\\
The datum of a generic commutative square as above is equivalent to the datum of an $\mathbb{F}_q$-point in $\spec R\circ (|\trait|\adjz)$.\\
The fact that the map on the top has the required property is equivalent to the fact that the image of the point above through the restriction map
\[
\spec R(|\mathbb{F}_q|\adjz)\to \spec (M\adjz)(|\mathbb{F}_q|\adjz)
\]
is in the image of the map
\[
\spec M (|\mathbb{F}_q|)\to \spec (M\adjz)(|\mathbb{F}_q|\adjz)
\]
induced by the functor $\trait\adjz$.
\end{proof}

%
%
%
%

We are now interested in the case where the $\funn$-points of the underlying monoidal scheme $\underline\Sigma$ are counted 
by a polynomial in $n$. Some preliminary definitions and results are in order.

A monoidal scheme $\Sigma$ is said to be {\em noetherian}  if it admits a finite open cover by representable subfunctors $\{\spec(A_i)\}$, with each $A_i$ a noetherian monoid. Recall that, as it is proved in \cite[Theorem 5.10 and 7.8]{Gil}, a monoid is noetherian if and only if it is finitely generated. This immediately implies that, for any prime ideal $\frak p \subset M$, the localized monoid $M_{\frak p}$ is noetherian and the abelian group $M_{\frak p}^\times$ of invertible elements in $M_{\frak p}$ is finitely generated. 
 
\begin{rmark} Notice that, given an $\fun$-scheme $(\underline{\Sigma}, \Sigma_\Z', \Phi)$, the fact that  the monoidal scheme $\underline{\Sigma}$ is noetherian does not entail that the scheme $\Sigma_\Z'$ is noetherian as well. Let us consider, for instance, the affine $\fun$-scheme given by $\Z[X,\varepsilon_i]/(\varepsilon_i^2)\to\Z[X]$, with $i\in\N$. 
The monoidal scheme is noetherian, while the ascending chain of ideals $\ldots\subset (\varepsilon_0,\ldots ,\varepsilon_i)\subset (\varepsilon_0,\ldots ,\varepsilon_{i+1})\subset\ldots$ does not have a maximal element. Observe that, as for 
the points of the classical scheme,  the presence of the $\varepsilon_i$'s is immaterial; hence, one has the required isomorphism $\Z[X](\vert\mathbb K\vert) \simeq \Z[X,\varepsilon_i]/(\varepsilon_i^2)(\mathbb K)$ for any field $\mathbb K$.
\end{rmark}

Let $\widetilde{\underline\Sigma}$ the geometrical realization of the monoidal scheme $\Sigma$. Following Connes-Consani's definition \cite[p.~25]{CC}, we shall say that $\underline\Sigma$ is  {\em torsion-free} if, for any $x\in   
\widetilde{\underline\Sigma}$, the abelian group $\Oc_{\underline\Sigma, x}^\times$ is torsion-free.

\begin{lemma}\label{torsion-freemonoidalschemes} A noetherian monoidal scheme $\underline\Sigma$ is torsion-free if and only if, for any finite group $G$ with $\#G=n$ and for any point $x\in   
\widetilde{\underline\Sigma}$,  the number $\#\Hom (\Oc_{\underline\Sigma, x}^\times, G)$ is polynomial in $n$.
\end{lemma}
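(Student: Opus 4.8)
The plan is to localise the statement and reduce it to an elementary count with finitely generated abelian groups. Both conditions in the lemma are \emph{pointwise} in $x\in\widetilde{\underline\Sigma}$: ``torsion-free'' means $\Oc_{\underline\Sigma,x}^\times$ is torsion-free for every $x$, and the other side asks that $\#\Hom(\Oc_{\underline\Sigma,x}^\times,G)$ be polynomial in $n=\#G$ for every $x$. So I would fix a point $x$, set $A:=\Oc_{\underline\Sigma,x}^\times$, and prove the two local conditions equivalent. Since $\underline\Sigma$ is noetherian, $A$ is a finitely generated abelian group (this is exactly what is recalled just before the lemma, via \cite{Gil}), so the structure theorem gives
\[
A\;\cong\;\Z^{r}\oplus T\,,\qquad T\cong\bigoplus_{i=1}^{s}\Z/d_i\Z\ \ (d_i\ge 2)\,,
\]
and torsion-freeness of $\underline\Sigma$ at $x$ is precisely $T=0$, i.e.\ $s=0$.

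Next I would compute $\#\Hom(A,G)$. As $A$ is abelian, every homomorphism $A\to G$ has abelian image, and for the lemma (in particular for the application $G=\funn$) it is enough to take $G$ cyclic of order $n$. Then from $\#\Hom(\Z,\Z/n\Z)=n$, from $\#\Hom(\Z/d\Z,\Z/n\Z)=\gcd(d,n)$, and from additivity of $\Hom(-,\Z/n\Z)$ in the first argument, one gets
\[
\varphi(n)\;:=\;\#\Hom(A,\Z/n\Z)\;=\;n^{r}\prod_{i=1}^{s}\gcd(d_i,n)\,.
\]
So the whole content reduces to: $\varphi$ is (the restriction to positive integers of) a polynomial in $n$ if and only if $s=0$.

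The direction ``torsion-free $\Rightarrow$ polynomial'' is then immediate, since $s=0$ gives $\varphi(n)=n^{r}$. For the converse I would argue by contradiction: if $s\ge 1$, put $D=\operatorname{lcm}(d_1,\dots,d_s)$; on the infinite set $\{kD:k\ge 1\}$ the function $\varphi$ agrees with the polynomial $n\mapsto n^{r}\prod_i d_i$, so if $\varphi$ were polynomial it would equal that polynomial identically, contradicting $\varphi(1)=1<\prod_i d_i$. Applying this equivalence at each $x$ yields the lemma.

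The step I expect to require the most care is not the arithmetic (the failure of $n\mapsto\gcd(d,n)$ to be polynomial for $d\ge2$ is elementary) but reconciling the literal phrasing ``any finite group $G$'' with what is actually true: for non-commutative $G$ the count $\#\Hom(\Z^{r},G)$ is the number of $r$-tuples of pairwise commuting elements and is not $n^{r}$ in general, so the clean polynomial behaviour holds precisely on the cyclic (more generally abelian) test objects — which are exactly the ones entering Connes--Consani's point count and the subsequent Proposition~\ref{finalproposition}. I would therefore state the equivalence against cyclic targets explicitly; everything else is routine.
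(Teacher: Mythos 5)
Your argument is correct and follows essentially the same route as the paper: localise at a point, use noetherianity to get a finitely generated group, compute the Hom count explicitly, and in the converse force the count to agree with a single polynomial on an infinite set of orders and then contradict it elsewhere — the only difference being that you test at multiples of $\operatorname{lcm}(d_1,\dots,d_s)$ and at $n=1$, where the paper tests at primes not dividing the torsion orders and then at a prime dividing one of them. Your caveat about non-commutative targets is well taken: the paper's forward step $\#\Hom(\Oc_{\underline\Sigma,x}^\times,G)=n^{N(x)}$ really requires abelian (in practice cyclic, e.g.\ $G=\funn$) test groups once $N(x)\ge 2$, which is exactly the reading used in the subsequent point-counting statements, so restricting the equivalence to such $G$ is the right way to state it.
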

\begin{proof} Since $\underline\Sigma$ is noetherian, the abelian  group $\Oc_{\underline\Sigma, x}^\times$ is finitely generated by the remark above. So, if $\underline\Sigma$ is also torsion-free, then  
$\Oc_{\underline\Sigma, x}^\times$ is free of rank ${N(x)}$, and, for any finite group $G$ with $\#G=n$, we have 
$\#\Hom (\Oc_{\underline\Sigma, x}^\times, G)=n^{N(x)}$.

For the converse, suppose there is a point $x$ such that $\Oc_{\underline\Sigma, x}^\times$ is not torsion-free. Being noetherian, $\Oc_{\underline\Sigma, x}^\times$ decomposes as a product $\Z^n\times\prod_{i\in\{ 1,\ldots m\}}\Z_{n_i}$.
For each prime number $p_0$ not dividing any of the $n_1,\ldots, n_m$, 
say $p_0>\hbox{LCM}\,(n_1,\ldots, n_m)$, 
the number of elements of $\Hom(\Oc_{\underline\Sigma, x}^\times,\Z_{p_0})$ is then $p_0^n$. Since there are infinitely many such prime numbers, were $\#\Hom(\Oc_{\underline\Sigma, x}^\times,\Z_p)$ a polynomial in $p$, it would be the polynomial $p^n$. Take now a prime number $p_1$ dividing $n_1$; in that case, the number of elements of $\Hom (\Oc_{\underline\Sigma, x}^\times,\Z_{p_1})$ is greater than $p_1^n$.  In conclusion, $\#\Hom(\Oc_{\underline\Sigma, x}^\times,\Z_p)$ cannot be a polynomial in $p$.
\end{proof}
By Lemma \ref{torsion-freemonoidalschemes}, for each noetherian and torsion-free monoidal scheme $\underline\Sigma$, one can define the polynomial
\begin{equation}\label{monoidalpolynomial} 
P(\underline\Sigma, n) = 
\sum_{x\in  \widetilde{\underline\Sigma}} \#\Hom (\Oc_{\underline\Sigma, x}^\times, \funn)\,.
\end{equation}
The following result is proved in \cite{CC} (Theorem 4.10, (1) and (2)). 
\begin{thm} Let  $(\underline{\Sigma}, \Sigma_\Z', \Phi)$ be an $\fun$-scheme such that the monoidal scheme $\underline{\Sigma}$ is noetherian and torsion-free. Then
\begin{enumerate}
\item $\#\underline\Sigma(\funn) = P(\underline\Sigma, n)$;
\item for each finite field ${\mathbb F}_q$ the cardinality of the set of points of the scheme $\Sigma_\Z'$ that are rational over ${\mathbb F}_q$ is equal to $P(\underline\Sigma, q-1)$.
\end{enumerate}
\end{thm}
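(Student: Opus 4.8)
The plan is to deduce (2) formally from (1) together with the defining property of an $\fun$-scheme, so that essentially all the content lies in (1), i.e.\ in the equality $\#\underline\Sigma(\funn)=P(\underline\Sigma,n)$. For (1) the strategy is local-to-global: I would show that a $\funn$-point of $\underline\Sigma$ is exactly a local homomorphism out of the stalk of $\underline\Sigma$ at a single point of $\widetilde{\underline\Sigma}$, and then count such homomorphisms in terms of the units of that stalk.

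First I would record that $\spec\funn$ is a one-point, local monoidal scheme --- every nonzero element of $\funn=\{0\}\cup\mu_n$ is invertible, so the unique point is the maximal ideal $\{0\}$, with unit group $\mu_n$ --- and that it has no nonempty proper Zariski open (localizing $\funn$ at a non-invertible element collapses it). Hence, given a morphism $f\colon\spec\funn\to\underline\Sigma$, pulling back an affine Zariski cover $\{\spec A_i\to\underline\Sigma\}$ along $f$ covers $\spec\funn$ by Zariski opens, one of which must be all of $\spec\funn$; so $f$ factors through some $\spec A_i$, i.e.\ through a monoid homomorphism $\varphi\colon A_i\to\funn$. If $x\in\widetilde{\underline\Sigma}$ is the image of the point of $\spec\funn$, equivalently the prime $\mathfrak p=\varphi^{-1}(\{0\})\subset A_i$, then every element of $A_i\setminus\mathfrak p$ maps to a unit of $\funn$, so $\varphi$ factors uniquely through $(A_i)_{\mathfrak p}=\Oc_{\underline\Sigma,x}$ via a local homomorphism, which in turn determines $f$; conversely every local homomorphism $\Oc_{\underline\Sigma,x}\to\funn$ arises this way. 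Therefore
\[
\underline\Sigma(\funn)\ \cong\ \coprod_{x\in\widetilde{\underline\Sigma}}\Hom_{\mathrm{loc}}\bigl(\Oc_{\underline\Sigma,x},\funn\bigr).
\]

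Next, a local homomorphism $\Oc_{\underline\Sigma,x}\to\funn$ sends the maximal ideal to $0$, hence restricts to a group homomorphism $\Oc_{\underline\Sigma,x}^\times\to\mu_n$, and conversely every such group homomorphism extends uniquely to a local homomorphism by sending all non-units to $0$; so $\Hom_{\mathrm{loc}}(\Oc_{\underline\Sigma,x},\funn)\cong\Hom(\Oc_{\underline\Sigma,x}^\times,\funn)$. Since $\underline\Sigma$ is noetherian, its underlying space $\widetilde{\underline\Sigma}$ is finite and each $\Oc_{\underline\Sigma,x}^\times$ is a finitely generated abelian group (see the remarks preceding Lemma \ref{torsion-freemonoidalschemes} and \cite{Gil}); torsion-freeness then makes each of these groups free, so $\#\Hom(\Oc_{\underline\Sigma,x}^\times,\funn)=n^{\,\mathrm{rk}\,\Oc_{\underline\Sigma,x}^\times}$. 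Summing over $x$ and comparing with eq.~\ref{monoidalpolynomial} (and Lemma \ref{torsion-freemonoidalschemes}) gives
\[
\#\underline\Sigma(\funn)\ =\ \sum_{x\in\widetilde{\underline\Sigma}}\#\Hom\bigl(\Oc_{\underline\Sigma,x}^\times,\funn\bigr)\ =\ P(\underline\Sigma,n),
\]
which proves (1). For (2), the multiplicative monoid $\vert\fq\vert=\{0\}\cup\fq^\times$ is cyclic of order $q-1$ on its units, that is $\vert\fq\vert=\mathbb{F}_{1^{q-1}}$; and by Definition \ref{CCschemes}(3) the natural transformation induced by $\Phi$, evaluated on the field $\fq$, is a bijection $\underline\Sigma(\vert\fq\vert)\xrightarrow{\sim}\Sigma_\Z'(\fq)$. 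Hence $\#\Sigma_\Z'(\fq)=\#\underline\Sigma(\mathbb{F}_{1^{q-1}})=P(\underline\Sigma,q-1)$ by (1).

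The main obstacle is the stalk reduction of the second paragraph: checking that a $\funn$-point of $\underline\Sigma$ is detected at one point of $\widetilde{\underline\Sigma}$ and only there, which hinges on $\spec\funn$ being a one-point local monoidal scheme and on the finiteness of $\widetilde{\underline\Sigma}$ for noetherian $\underline\Sigma$. Once this is granted, the passage to unit groups and the already-established polynomiality of $P(\underline\Sigma,n)$ are routine bookkeeping. (This reconstructs, in the language of the present paper, the proof of \cite[Thm.~4.10]{CC}.)
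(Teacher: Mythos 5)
Your proposal is correct and matches the intended argument: the paper does not reprove this theorem but cites Connes--Consani (Theorem 4.10), whose stalkwise counting you have reconstructed faithfully --- the reduction of a $\funn$-point to a local homomorphism out of a single stalk (using that $\spec\funn$ is a one-point local monoidal scheme) and the identification of such homomorphisms with $\Hom(\Oc_{\underline\Sigma,x}^\times,\funn)$, summed over the finitely many points of $\widetilde{\underline\Sigma}$. Your deduction of (2) from (1) via the bijection $\#\Sigma_\Z'(\fq)=\#\underline\Sigma(\mathbb{F}_{1^{q-1}})$ induced by $\Phi$ on fields coincides with the paper's own remark following the theorem (eq.~\ref{countingpointsofF1schemes}).
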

Note that the last statement immediately follows from eq.~\ref{countingpointsofF1schemes}, which holds true without any additional assumption on the monoidal scheme.

For each $\nbcat$-scheme $\Sigma$ and each abelian group $G$ (in multiplicative notation, with absorbing element $0$), we denote by
$$\Hom_{\Bcat} (\Oc_{\Sigma, x}^\times, G)$$
the subset of $\Hom(\Oc_{\underline\Sigma, x}^\times, G)$ given by the morphisms satisfying the relations encoded in the blueprint structure of $\Sigma$.
Lemma \ref{torsion-freemonoidalschemes} prompts us to introduce the following definition. 

\begin{definition}\label{definitionBschemenoethtorsionfree}  A $\nbcat$-scheme $\Sigma$ is said to be noetherian if the monoidal scheme $\underline\Sigma$ is noetherian.
 A noetherian $\nbcat$-scheme $\Sigma$ is said to be torsion-free if for any finite group $G$,  the number 
 $\#\Hom_{\Bcat} (\Oc_{\Sigma, x}^\times, G)$  is polynomial in $\# G$.
\end{definition}
\begin{rmark}
While in the case of a noetherian torsion-free monoidal scheme $\underline{\Sigma}$ the polynomial $\#\Hom_{\Bcat} (\Oc_{\underline\Sigma, x}^\times, G)$ is always a monic monomial, this is not always the case for a noetherian torsion-free $\nbcat$-scheme. The next example illustrates this point.
\end{rmark}

\begin{ex}
Consider the affine $\Bcat$-scheme $\Sigma$ given by the free monoid $M=\langle T_1,T_2,T_3,T_4\rangle$ generated by four elements with relations given by the natural projection
\[
\Z[T_1,T_2,T_3,T_4]\to\Z[T_1,T_2,T_3,T_4]/(T_1-T_3+T_2-T_4)\,.
\]
Let  $G$ be a finite group (in multiplicative notation, with absorbing element $0$); we look for maps $f\colon M\to G$ together with compatible maps
\[
\xymatrix{
\Z [T_1,T_2,T_3,T_4] \ar[d] \ar[r] & G\adjz \ar[d]^{\mathrm{id}} \\
\Z [T_1,T_2,T_3,T_4]/(T_1-T_3+T_2-T_4) \ar[r]  & G\adjz
}
\]
Since $G\adjz$ is free, to ensure the compatibility of $f$ with the relation $T_1+T_2=T_3+T_4$ one must have that 
either $f(T_1)=f(T_3)$ and $f(T_2)=f(T_4)$ or $f(T_1)=f(T_4)$ and $f(T_2)=f(T_3)$. There are therefore only 3 possible cases for the polynomial expressing the cardinality of $\Hom_{\Bcat} (\Oc_{\Sigma, x}^\times, G)$:
\begin{itemize}
\item $f(T_1)=f(T_2)= f(T_3)= f(T_4)=0$; in this case the polynomial is the constant polynomial $1$;
\item either $f(T_1)=0$ and $f(T_2)\neq 0$ or $f(T_1)\neq 0$ and $f(T_2)= 0$, each case giving rise to two possible cases; therefore, in each of the four possible cases
the polynomial is $n$; 
\item $f(T_1)\neq 0$ and $f(T_2) \neq 0$; in this case the polynomial is $2n^2-n$ (the term $2n^2$ accounts for $2$  possible free nonzero choices on  $f(T_1)$ and $f(T_2)$, that have to be counted twice since either $f(T_1)=f(T_3)$ or $f(T_1)=f(T_4)$, and the term $-n$ accounts for the case  $f(T_1)=f(T_2)$).
\end{itemize}
\end{ex}

Let   $(\underline{\Sigma}, \Sigma_\Z, \Sigma_\Z', \Phi)$  be an $\fun$-scheme with relations such that
the underlying $\nbcat$-scheme $\Sigma$ is noetherian and torsion-free. We define the polynomial
$$Q(\underline\Sigma, n) = \sum_{x\in \underline\Sigma}\# \Hom_{\Bcat} (\Oc_{\Sigma, x}^\times, \funn)\,.$$

\begin{prop}\label{finalproposition} In the above hypotheses one has the inequality $Q(\underline\Sigma, n) \leq P(\underline\Sigma, n)$.
\end{prop}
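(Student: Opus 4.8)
The plan is to prove the inequality term by term over the points of the monoidal scheme $\underline\Sigma$. First I would note that both sums are indexed by the same set: in eq.~\ref{monoidalpolynomial} the sum defining $P(\underline\Sigma,n)$ runs over the points $x$ of the geometric realization $\widetilde{\underline\Sigma}$ of $\underline\Sigma=\widehat\rho(\Sigma)$, while the sum defining $Q(\underline\Sigma,n)$ runs over the points $x$ of $\underline\Sigma$, i.e.\ over that very same set. Moreover, since $\widehat\rho$ is locally the functor $(A,A\adjz\to R)\mapsto A$ of adjunction~\ref{MonBluepadjuctionnew} that forgets the ring of relations, and it is compatible with the localizations used to form stalks, the stalk $\Oc_{\underline\Sigma,x}$ of the monoidal structure sheaf is the ``shape'' of the stalk blueprint $\Oc_{\Sigma,x}$; in particular the abelian group $\Oc_{\Sigma,x}^\times$ appearing in $Q$ is literally $\Oc_{\underline\Sigma,x}^\times$.

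The heart of the argument is then a tautology. By the very definition of $\Hom_{\Bcat}(\Oc_{\Sigma,x}^\times,\funn)$ as the \emph{subset} of $\Hom(\Oc_{\underline\Sigma,x}^\times,\funn)$ consisting of those homomorphisms that are compatible with the relations encoded in the blueprint structure of $\Sigma$ at $x$, one has, for every point $x$ and every positive integer $n$,
\[
\#\Hom_{\Bcat}(\Oc_{\Sigma,x}^\times,\funn)\ \le\ \#\Hom(\Oc_{\underline\Sigma,x}^\times,\funn)\,.
\]
Summing over $x$ gives $Q(\underline\Sigma,n)\le P(\underline\Sigma,n)$ for every $n\ge 1$. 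Under the standing hypotheses the left-hand side is a polynomial in $n$ by Definition~\ref{definitionBschemenoethtorsionfree} (the scheme being noetherian, the relevant cover is finite and each summand is polynomial), and, $\underline\Sigma$ being noetherian and torsion-free, the right-hand side is the polynomial of eq.~\ref{monoidalpolynomial} provided by Lemma~\ref{torsion-freemonoidalschemes}; an inequality between two polynomials that holds at every positive integer is an inequality of polynomials, which is the assertion.

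There is no genuine analytic or combinatorial difficulty here: the whole content of the proposition is the conceptually simple fact that imposing extra ring relations can only shrink the set of $\funn$-valued homomorphisms out of each stalk. Accordingly, the only step deserving care — and the one I would write out carefully — is the identification in the first paragraph, i.e.\ that passing to the underlying monoidal scheme does not change the point set nor the stalk-wise unit groups, so that $Q$ and $P$ really are being compared summand by summand with matching index sets and $P$ is genuinely the polynomial of eq.~\ref{monoidalpolynomial}. I would also observe that, even without the torsion-freeness of the underlying monoidal scheme (so that $P$ is a priori only a function of $n$), the statement and its proof survive verbatim as the pointwise inequality $Q(\underline\Sigma,n)\le P(\underline\Sigma,n)$ for all $n\ge 1$.
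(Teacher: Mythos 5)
Your proof is correct and follows essentially the same route as the paper: the key point, there as here, is simply that $\Hom_{\Bcat}(\Oc_{\Sigma,x}^\times,\funn)$ is by definition a subset of $\Hom(\Oc_{\underline\Sigma,x}^\times,\funn)$, so the inequality holds summand by summand. Your additional remarks on the matching index sets and on polynomiality are a careful elaboration of what the paper leaves implicit, not a different argument.
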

\begin{proof} It is clear that 
$\Hom_{\Bcat} (\Oc_{\Sigma, x}^\times, \funn) \subset \Hom (\Oc_{\underline\Sigma, x}^\times, \funn)$, since the first set
contains only the monoid morphisms that are compatible with the blueprint structure locally defined around $x$.
\end{proof}

\begin{rmark}
Connes and Consani developed, in their papers \cite{CCgamma1} and \cite{CCgamma2},  an approach based on $\Gamma$-sets, that generalizes their previous theory of $\fun$-schemes. Since the category of $\Gamma$-sets is endowed with a natural monoidal closed structure, one can apply to that framework the general formalism introduced by To\"en and Vaqui\'e in \cite{TV}. In \cite{CCgamma2}, a notion of scheme is defined; this notion is compared to that arising from \cite{TV}, and the two are shown to be different. The situation is thus analogous to that occurring in the case of blue schemes, as described in \cite{Lor16}. Therefore, it seems worth briefly commenting upon Connes-Consani's construction.

Recall that a $\Gamma$-set is a functor
\[
M(\trait )\colon\Gamma^{\mathrm{op}}\to\Set
\]
from the category $\Gamma^{\mathrm{op}}$ of pointed finite sets (denoted $0^+$, $1^+$, $2^+,\ldots$) to $\Set$. Such a notion was first introduced by Segal \cite{Segal}, who used spaces instead of sets, in order to model commutative monoid and group structures up to homotopy. To this aim, he restricts to the case of ``special $\Gamma$-spaces'', that is, those functors such that the natural map
\[
M(n^+)\to M(1^+)\times\ldots\times M(1^+)
\]
is an equivalence.
This way, the image of $1^+$ corresponds to the object $M:=M(1^+)$ one is interested in, the map $2^+\to 1^+$ extending the final map $2\to 1$ corresponds to an operation (say, additive) 
\[
M\times M\to M\,,
\]
and all the other data correspond to associativity and commutativity conditions. On the other hand, by using the monoidal structure of the category of $\Gamma$-spaces, one can define a second operation (say, multiplicative) on $M$.

Connes-Consani's basic idea is that it is possible to obtain more general structures on $M$ by dropping the ``special'' condition; in particular, one can get a multiplicative monoid structure as above (but without the addition defined in Segal's setting). 
In order to model such a monoid structure, the only relevant information in the $\Gamma$-set has to be the image $M$ of $1^+$; technically, this is implemented by asking the image of an object $n^+$, which is just the  $n$ fold coproduct of $1^+$ with itself, to be just the $n$ fold coproduct of $M$ with itself.  It is shown in \cite{CCgamma2} that, by following a procedure of this kind, the categories $\Mon$ and $\Ring$ embed in the category $\Gamma$-sets, as well as does the category $\mathfrak{MR}$ as defined in \cite{CC}. In the case of rings (realized as objects in the category $\Gamma$-sets), the corresponding schemes, according to Connes-Consani's definition, coincide with the classical ones \cite[Prop.~7.9]{CCgamma2}, whilst this is not the case for the schemes obtained by applying To\"en-Vaqui\'e's formalism \cite[Lemma 8.1]{CCgamma2}.

As a general consideration, we could say that the notion of scheme defined \cite{TV} places greater emphasis on the overall category, while that  defined in \cite{CCgamma2} focuses more on the intrinsic geometric properties of each single object.
\end{rmark}

\appendix

\section{Fibered categories and stacks}

To give the reader a better understanding of To\"en and Vaqui\'e's general construction presented in Section \ref{relativeschemes}, we briefly review some basic facts about fibered categories, pseudo-functors, and stacks, closely following the exposition in \cite{V} (to which the reader is referred for further details). 

Let $\Ccat$ be any category. Roughly speaking, a stack is a sheaf of categories on $\Ccat$ with respect to some Grothendieck topology (recall Definition \ref{topology}).
\begin{definition}[\cite{V}, Def.~3.1]\label{deffcartasianarrow}
Let  $p_\Fcat\colon\Fcat\to\Ccat$ be a functor. An arrow $\phi: \xi \rightarrow \eta$ of $\Fcat$ is {\it Cartesian} with respect to $p_\Fcat$ if, for any arrow $\psi\colon \zeta \rightarrow \eta$ in $\mathcal{F}$ and any arrow $h\colon p_{\Fcat} \zeta \rightarrow p_{\Fcat}\xi$ in $\Ccat$ with $p_{\Fcat}\phi \circ h = p_{\Fcat}\psi$, there exists a unique arrow $\theta \colon\zeta\rightarrow \xi$ with 
$p_{\Fcat}\theta = h$ and $\phi \circ \theta = \psi$, as in the following diagram:
\[\xymatrix{\zeta  \ar[rrr]^{\psi} \ar[rrd]_{\theta} \ar[dd]  &&& \eta \ar[dd]\\
&& \xi\ar[ru]_{\phi} \ar[dd]\\
p_{\mathcal{F}} \zeta\ar'[rr] [rrr] \ar[rrd]_{h} &&& p_{\mathcal{F}}\eta\\
&& p_{\mathcal{F}}\xi \ar[ru] }
\]
Whenever $\xi \rightarrow \eta $ is a Cartesian arrow of $\Fcat$ mapping to an arrow $U\rightarrow V$ of $\Ccat$, we shall also say that $\xi$ is a {\it pullback  of $\eta$ to $U$}.
\end{definition}
\begin{definition}[\cite{V}, Def.~3.5]\label{deffiberedcategory}
A category $\Fcat$ endowed with a functor $p_\Fcat:\Fcat\to\Ccat$ is said to be fibered over $\Ccat$ (with respect to $p_\Fcat$) if, for any map $f\colon U\to V$ in $\Ccat$ and any object $\eta$ in $\Fcat$ such that $p_\Fcat\eta=V$, there exists a Cartesian map $\phi\colon\xi\to\eta$ in $\Fcat$ such that $p_\Fcat\phi=f$.
\end{definition}
\begin{definition}\cite[Def.~3.9]{V}\label{cleavage}
Given a fibered category $p_\Fcat\colon \Fcat\to\Ccat$ over $\Ccat$, a cleavage is a class $K$ of Cartesian maps in $\Fcat$ such that, for each map $f:X\to Y$ in $\Ccat$ and each object $\xi$ in $\Fcat$ over $Y$, there is exactly one map in $K$ over $f$ with codomain $\xi$; when a cleavage is fixed, this unique map will be denoted by $f^\ast_\xi$, or, by a slight abuse of notation, simply by $f^\ast$, if $\xi$ is clear from the context.
\end{definition}
\begin{rmark}
Let $S$ be a set and $\mathrm{SET}$ the set of small sets. 
The assignment of a map of sets $f\colon S\to \text{SET}$ is obviously equivalent to the assignment of the map $p_F\colon F\to S$, with $F=\coprod_{s\in S}f(s)$ and $p_F$ the natural projection. Notice that, for every $s\in S$, one can recover the set $f(s)$ as the fiber $p_F^{-1}\{ s\}$.\\
If we regard a set as a discrete category, then the notion of fibered category introduced in Definition \ref{deffiberedcategory}  can be interpreted as a generalization of the construction above to the categorical framework.\\
When dealing with a functor $\Ccat^{\mathrm{op}}\to{\mathsf {CAT}}$, however, we have not only objects (namely, the categories which are images of objects of $\Ccat$), but also maps between them (namely, the functors which are images of maps of $\Ccat$). So, the fibers on objects of $\Ccat$ with respect to the fibration $p_\Fcat\colon \Fcat\to\Ccat$ have to be connected by maps. This idea is made precise by the notion of a cartesian arrow introduce in Definition \ref{deffcartasianarrow}: the existence of a Cartesian arrow $\phi \colon \xi\to\eta$ amounts to say that $\xi$ is the image of $\eta$ by the functor $\Fcat_{p_\Fcat\eta}\to\Fcat_{p_\Fcat\xi}$ which is image of the map $p_\Fcat\phi\colon p_\Fcat\xi\to p_\Fcat\eta$. Images of maps in $\Ccat$ are defined likewise by imposing the Cartesian condition and using composition rules in $\Fcat$. But there is one more issue to be considered: when we enconde functors data in properties of the category $\Fcat$ (with respect to $p_\Fcat$), we have to bear in mind that categorical properties make sense only up to isomorphisms, and this is reason why, in general, we may expect to recover the original functor only up to equivalences. This fact leads to the following definition.
\end{rmark}
\begin{definition}[\cite{V}, Def.~3.10]
A {\it pseudo-functor} $\Pi\colon \Ccat^\mathrm{op}\to\mathsf{Cat}$ consists of the following data:
\begin{enumerate}
\item[i)] for each object $U$ of $\Ccat$, a category $\Pi U$;
\item[ii)] for each arrow $f\colon U \rightarrow V$, a functor $f^{\ast}\colon \Pi V \rightarrow \Pi U$;
\item[iii)] for each object $U$ of $\Ccat$, an isomorphism $\epsilon _{U}\colon Id ^{\ast}_{U} \simeq id_{\Pi U}$ of functors $\Pi U \rightarrow \Pi U$;
\item[iv)] for each pair of arrows $U  \overset {f}  \rightarrow V \overset{g} \rightarrow W$, an isomorphism
$$\alpha_{f, g} : f^{\ast} g^{\ast} \simeq (gf)^{\ast} \colon \Pi W \rightarrow\Pi U$$
 of functors $\Pi W \rightarrow \Pi U$. 
\end{enumerate}
These data are required to satisfy some natural compatibility conditions which we do not explicitly describe here{\rm (see \cite[p.~47]{V})}.
\end{definition}
It can be proven that the assignment of a fibered categories over a category $\Ccat$ is equivalent, up to isomorphism, to the assignment of a pseudo-functor $\Ccat^\mathrm{op}\to\mathsf{Cat}$. For this reason, in what follows we will tend not to distinguish between a pseudo-functor and the associated fibered category and, given a fibered category $p_\Fcat:\Fcat\to\Ccat$ and an object $X$ of $\Ccat$, we will denote by $F(X)$ the fiber over $X$.

It can also be shown that any pseudo-functor can be strictified, that is, it admits an equivalent functor (\cite[Th.~3.45]{V}). Nonetheless,  it can be convenient to work with pseudo-functors because  many constructions naturally arising in algebraic geometry produce non strict pseudo-functors. The point of view of fibered categories allows one to deal with pseudo-functors by remaining in the usual context of strict functors.

\begin{ex}
Let us consider To\"en and Vaqui\'e's construction, as summarised in section \ref{relativeschemes}, in the particular case of classical schemes. In this case, the category of interest is $\Ring$, regarded as $\aff_{\Ring}^{\mathrm{op}}$, and there is an assignment mapping each ring $A$ to the category $A\trait\Mod$  and each ring morphism $A\to B$ to the functor $\trait\otimes_AB\colon A\trait\Mod\to B\trait\Mod$. Given two consecutive morphims $A\to B\to C$ and an object $M$ of $A-\Mod$, the objects $C\otimes_B(B\otimes_AM)$ and $C\otimes_AM$ are not equal, but only isomorphic. The previous construction provides therefore  a naturally defined pseudo-functor $\Pi\colon \aff_{\Ring}^{\mathrm{op}} \to \mathsf{Cat}$.\\
The pseudo-functor $\Pi$ can be associated to the fibered category over $\Ring^\mathrm{op}$ defined (up to equivalence) in the following way. Let $\Mod$ be the category whose objects are pairs $(A,M)$ with $A$ a ring and $M$ an $A$-module and whose morphisms are pairs of the form  $(f, \lambda)\colon (A, M) \to ( B,N)$, where $f\colon B\to A$ is a ring morphism and 
$\lambda\colon A\otimes_BN\to M$ is a morphism of  $A$-modules. Then the natural projection $\Mod\to\Ring^\mathrm{op}$ is a fibration corresponding to $\Pi$. For each map $f\colon \spec A\to\spec B$ and for each $B$-module $M$, a natural choice of cartesian lifting is given by $(f\colon B\to A, A\otimes_B M=A\otimes_B M)$.
\end{ex}

\begin{ex}\label{exampleYoneda}Let $\Ccat$ be a category closed under fibered products and denote by $\operatorname{Arr}\Ccat$ the category of arrows in $\Ccat$. Let $p_{\operatorname{Arr}\Ccat}\colon \operatorname{Arr}\Ccat\rightarrow \Ccat$ the functor mapping each arrow $X\rightarrow Y$ to its codomain $Y$ and acting in the obvious way on morphisms in $\operatorname{Arr}\Ccat$. The 
$\operatorname{Arr}\Ccat$ is a fibered category, and its associated pseudo-functor maps an object $X$ to the category 
$\Ccat_{/X}$ and a morphism $X\to Y$ to the pullback functor $\trait\times_YX\colon \Ccat_{/Y}\to\Ccat_{/X}$. Similarly, the functor $(p_{\mathrm{Arr}\Ccat})^\mathrm{op}\colon (\operatorname{Arr}\Ccat)^\mathrm{op}\to\Ccat^\mathrm{op}$ is a fibration (independently of the existence of pullbacks), corresponding to the covariant functor $\Ccat\to\mathrm{Cat}$ acting on objects as above and sending a map $f$ to the composition functor $f\circ\trait$.\\
For each object $X$ of $\Ccat$, the category $\Ccat_{/X}$ is naturally fibered over $\Ccat$ through the projection $\Ccat_{/X}\to\Ccat$ given by the domain functor. This fibration is associated, by identifying a set with the corresponding discrete category, to the functor $\Ccat(\trait,X)$, that is, the image of $X$ by the Yoneda embedding. It is thus not surprising that the pseudo-functor $(p_{\mathrm{Arr}\Ccat})^\mathrm{op}$ can be proven to induce an embedding of $\Ccat$ in the 2-category of categories fibered over $\Ccat$.\\
Using this embedding, the Yoneda lemma can be generalized to categories in the following way. Let us recall that the classical Yoneda lemma states that, for every presheaf $F\colon \Ccat^\mathrm{op}\to\Set$ and every object $X$ of $\Ccat$, there is a natural isomorphism $\Hom(\Ccat(\trait ,X),F)\cong F(X)$, which is obtained by sending a map of presheaves to the image of $1_X$. It can be shown that, for every pseudo-functor $p_F\colon F\to\Ccat$ and every object $X$ of $\Ccat$, there is an equivalence of categories $\Hom_\Ccat(\Ccat_{/X},F)\simeq F(X)$. For a proof, see \cite[3.6.2]{V}:  we just point out  that,  analogously to the classical case, also the equivalence $\Hom_\Ccat(\Ccat_{/X},F)\to F(X)$ is defined on objects by mapping a functor to the image of $1_X$, regarded now as an object of $\Ccat_{/X}$ (hence, in the discrete case, this equivalence essentially gives back the Yoneda isomorphism).
\end{ex}
Example  \ref{exampleYoneda} should make clear that there is a rather strict analogy between the theory of presheaves in sets and the theory of presheaves in categories. We now see how the theory of sheaves extends to the case of categories.\\

Let  $\Ccat$ be a category endowed with a Grothendieck topology. 
Recall from Definition \ref{sheaf} that a covering $\mathcal{U}=\{ U_i\to U\}_{i\in I}$ is associated to the subpresheaf $h_\mathcal{U}$ of $h_U=\Ccat(\trait ,U)$ given by the maps that factorise through some element of $\mathcal{U}$. The inclusion map induces a restriction map $\Hom(h_U,F)\to\Hom(h_\mathcal{U},F)$ for each presheaf $F$, and $F$ is said to be i) separated if this map is injective for each covering $\mathcal{U}$; ii) a sheaf if it is bijective for each covering $\mathcal{U}$.

In passing from sets to categories, it is natural to replace $h_U$ with $\Ccat_{/U}$ (see Example \ref{exampleYoneda}) and, accordingly, $h_\mathcal{U}$ with the full subcategory $\Ccat_{/\mathcal{U}}$ of $\Ccat_{/U}$ whose objects are the maps that factorise through some element of $\mathcal{U}$, ``monomorphism'' with ``embedding'' (that is, ``fully faithful functor'') and ``bijection'' with ``equivalence''. So, by regarding $\Ccat_\mathcal{/U}$ as fibered over $\Ccat$ by the composite map $\Ccat_{/\mathcal{U}}\hookrightarrow\Ccat_{/U}\to\Ccat$, we have the following definition.
\begin{definition}
Given a site $\Ccat$, a fibered category $p_\Fcat:\Fcat\to\Ccat$ is said to be
\begin{enumerate}
\item[i)] a prestack if, for any object $U$ of $\Ccat$ and covering $\mathcal{U}$ of $U$, the restriction functor $\Hom_\Ccat(\Ccat_{/U},\Fcat)\to\Hom_\Ccat(\Ccat_{/\mathcal{U}},\Fcat)$ is an embedding;
\item[ii)] a stack if, for any object $U$ of $\Ccat$ and covering $\mathcal{U}$ of $U$, the restriction functor $\Hom_\Ccat(\Ccat_{/U},\Fcat)\to\Hom_\Ccat(\Ccat_{/\mathcal{U}},\Fcat)$ is an equivalence.
\end{enumerate}
\end{definition}
As already observed, the category $\Hom_\Ccat(\Ccat_{/U},\Fcat)$ is equivalent to $\Fcat(U)$. The category $\Hom(\Ccat_{/\mathcal{U}},F)$ also admits an explicit description, in terms of descent data, that can be thought of as glueing data up to isomorphism, and that we now describe.

Given a site $\Ccat$ and a covering $\mathcal{U}=\{ U_i\to U\}_{i\in I}$, we shall write  $U_{i_1,\ldots i_n}$ as a shorthand for $U_{i_1}\times_U\ldots\times_UU_{i_n}$. Notice that, whenever $\{ i_{j_1},\ldots i_{j_k}\}\subset\{ i_1\ldots i_n\}$, there is a natural projection map $p_{i_{j_1},\ldots i_{j,k}}:U_{i_1,\ldots i_n}\to U_{i_{j_1},\ldots i_{j_k}}$.
\begin{definition}\cite[Def.~4.2]{V}
Let $\Ccat$ be a site, $\Fcat$ a category fibered over $\Ccat$, and $\mathcal{U}= \{ U_{i} \rightarrow U\}$ a covering in $\Ccat$. Let be given a cleavage $K$ (see Definition \ref{cleavage}). An {\it object with descent data} ($\{ \xi _{i} \}, \{\phi_{ij}\}$) on $\mathcal{U}$ is a collection of objects $\xi_{i} \in \mathcal{F}(U_{i})$, together with isomorphisms $\phi_{ij}: pr^{\ast}_{1}\xi_{i} \simeq pr^{\ast}_{2}\xi_{j} $ in  $\mathcal{F}(U_{i}\times_{U} U_{j})$ such that the following cocycle conditions is satisfied:
$$ pr^{\ast}_{13}\phi_{ik} = pr^{\ast}_{12}\phi_{ij}\circ pr^{\ast}_{23}\phi_{jk} : pr^{\ast}_{3}\xi_{k}\rightarrow pr^{\ast}_{1}\xi _{i}\quad \text{\rm for any triple of indices $i, j, k$}\,.
$$
\noindent
The isomorphisms $\phi_{ij}$ are called {\it transition isomorphisms} of the object with descent data.\\
An arrow between objects with descent data
 $$\{\alpha_{i}\}: (\{\xi_{i}\}, \{\phi_{ij}\})\rightarrow (\{ \eta_{i}\}, \{\psi_{ij}\})$$
is a collection of arrows $\alpha_{i} : \xi_{i} \rightarrow \eta_{i} $ in $\mathcal{F}(U_{i})$ with the property that  for each pair of indices $i, j$ the diagram 
\begin{equation}\label{diagraminternalHom1} 
\xymatrix{
pr^{\ast}_{2} \xi_{j} \ar[r]^{pr^{\ast}_{2}\alpha_{j}}\ar[d]^ {\phi_{ij}} & pr ^{\ast} _{2}\eta_{j} \ar[d]^{\psi_{ij}}  \\
pr^{\ast}_{1} \xi_{i}\ar[r] _{pr^{\ast}_{1}\alpha_{i}}& pr^{\ast}_{1} \eta_{i}
}
\end{equation} 
commutes.
\end{definition}
\begin{prop}\cite[Prop. 4.5]{V}
Given a site $\Ccat$, a category $\Fcat$ fibered over $\Ccat$, and a cover $\mathcal{U}$ in $\Ccat$, objects with descent data on $\mathcal{U}$ with arrows between them form a category, which is equivalent to $\Hom(\Ccat_{/\mathcal{U}},F)$.
\end{prop}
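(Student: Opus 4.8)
The plan is to construct a functor
\[
\mathcal{D}\colon\Hom_\Ccat(\Ccat_{/\mathcal{U}},\Fcat)\longrightarrow\mathsf{DD}(\mathcal{U})
\]
into the category $\mathsf{DD}(\mathcal{U})$ of objects with descent data on $\mathcal{U}$ (with the arrows of \cite[Def.~4.2]{V}), and to prove it is an equivalence. Before this, one checks that $\mathsf{DD}(\mathcal{U})$ is indeed a category: componentwise composition $\{\beta_i\}\circ\{\alpha_i\}:=\{\beta_i\circ\alpha_i\}$ again satisfies the compatibility square \eqref{diagraminternalHom1} by pasting two commuting squares, and identities, unitality and associativity are inherited from the fibres $\Fcat(U_i)$; this is routine. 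The functor $\mathcal{D}$ is then defined by evaluation. It is convenient to note first that $\Ccat_{/\mathcal{U}}\to\Ccat$ (domain functor) is a discrete fibration, so an object of $\Hom_\Ccat(\Ccat_{/\mathcal{U}},\Fcat)$ is just a functor $\Phi\colon\Ccat_{/\mathcal{U}}\to\Fcat$ over $\Ccat$ that carries every arrow to a cartesian arrow, and an arrow is a natural transformation with vertical components. Given such a $\Phi$, set $\xi_i:=\Phi(U_i\to U)\in\Fcat(U_i)$; the two projections $pr_1\colon U_{ij}\to U_i$ and $pr_2\colon U_{ij}\to U_j$ are morphisms of $\Ccat_{/\mathcal{U}}$, so $\Phi(pr_1)$ and $\Phi(pr_2)$ are cartesian arrows with common source $\Phi(U_{ij}\to U)$ and targets $\xi_i$, $\xi_j$; comparing them through the fixed cleavage yields the transition isomorphism $\phi_{ij}\colon pr_1^\ast\xi_i\simeq pr_2^\ast\xi_j$. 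The cocycle identity over $U_{ijk}$ follows from functoriality of $\Phi$ together with the coherence isomorphisms $\alpha_{f,g}$ of the pseudo-functor. On arrows, $\mathcal{D}$ sends $\alpha\colon\Phi\Rightarrow\Phi'$ to $\{\alpha_{U_i\to U}\}$, whose compatibility with transition isomorphisms is exactly the naturality of $\alpha$ along $pr_1$, $pr_2$.

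Next I would show that $\mathcal{D}$ is fully faithful. The key point is that every object $(V\xrightarrow{a}U)$ of $\Ccat_{/\mathcal{U}}$, once a factorization $a\colon V\to U_i\to U$ is chosen, comes equipped with a morphism $m\colon(V\to U)\to(U_i\to U)$ in $\Ccat_{/\mathcal{U}}$; since $\Phi'$ preserves cartesian arrows and every arrow of $\Ccat_{/\mathcal{U}}$ is cartesian, for a natural transformation $\alpha\colon\Phi\Rightarrow\Phi'$ the component $\alpha_{V\to U}$ is forced to be the unique vertical arrow making the naturality square of $\alpha$ at $m$ commute. Hence $\alpha$ is determined by the family $\{\alpha_{U_i\to U}\}$, which gives faithfulness. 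For fullness, starting from $\{\alpha_i\}\in\mathsf{DD}(\mathcal{U})$ one defines $\alpha_{V\to U}$ by exactly this forced recipe (using any factorization), checks independence of the factorization — this uses the square \eqref{diagraminternalHom1} pulled back along the map $V\to U_{ij}$ determined by two competing factorizations — and checks naturality; the resulting transformation is sent by $\mathcal{D}$ to $\{\alpha_i\}$.

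The step I expect to be the main obstacle is essential surjectivity: given $\big(\{\xi_i\},\{\phi_{ij}\}\big)$ one must reconstruct a functor $\Phi\colon\Ccat_{/\mathcal{U}}\to\Fcat$ over $\Ccat$ taking every arrow to a cartesian arrow. For each object $(V\xrightarrow{a}U)$ of $\Ccat_{/\mathcal{U}}$ I would fix a factorization $V\xrightarrow{a_V}U_{i(V)}\to U$ (taking the identity factorization for the objects $U_i\to U$ themselves) and put $\Phi(V\to U):=a_V^\ast\xi_{i(V)}$; for a morphism $f\colon(V\to U)\to(W\to U)$ the two chosen factorizations assemble into a map $V\to U_{i(V),i(W)}$, and the pullback of $\phi_{i(V)i(W)}$ along it supplies the vertical isomorphism needed to build $\Phi(f)$ from $f$. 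The work is to check that this is a well-defined functor of the required kind: functoriality $\Phi(g\circ f)=\Phi(g)\circ\Phi(f)$ reduces, after unwinding the coherence isomorphisms $\alpha_{f,g}$, to the cocycle condition over the appropriate triple fibre product, and compatibility with identities to $\phi_{ii}=\mathrm{id}$ (itself a consequence of the cocycle condition); each $\Phi(f)$ is a cartesian arrow post-composed with a vertical isomorphism, hence cartesian. Finally one verifies $\mathcal{D}(\Phi)\cong\big(\{\xi_i\},\{\phi_{ij}\}\big)$, the isomorphism coming from the unit isomorphisms $\epsilon_{U_i}$ of the pseudo-functor. Conceptually nothing is arbitrary — every choice is forced and the cocycle condition is precisely what makes all the gluings mutually coherent and independent of the factorizations picked; the only genuinely delicate aspect of the argument is the bookkeeping with $\epsilon_U$ and $\alpha_{f,g}$, which one can alternatively sidestep by first strictifying the pseudo-functor (\cite[Th.~3.45]{V}) so that the pullbacks become literally functorial.
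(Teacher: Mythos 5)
This proposition is quoted verbatim from Vistoli's notes and the paper supplies no proof of its own, so the only meaningful comparison is with the cited source: your argument --- the evaluation functor at the objects $(U_i\to U)$, full faithfulness via the chosen factorizations $V\to U_{i}$ together with cartesianness of their images, and essential surjectivity by fixing factorizations and gluing with the pulled-back $\phi_{ij}$, with functoriality reducing to the cocycle condition --- is precisely the standard proof given there, and it is correct in outline. The one imprecision is the parenthetical ``$\phi_{ii}=\mathrm{id}$'': $\phi_{ii}$ is an isomorphism $pr_1^\ast\xi_i\simeq pr_2^\ast\xi_i$ on $U_i\times_U U_i$ and is not the identity in general; what your construction actually needs (and what the cocycle condition yields) is that its pullback along the diagonal --- equivalently along the map $V\to U_{i(V),i(V)}$ obtained by using the same factorization twice --- is the identity of $a_V^\ast\xi_{i(V)}$.
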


\frenchspacing\bigskip

\vskip2cm
\footnotesize{
\noindent
{\sc Dipartimento di Matematica, Universit\`a di Genova, via Dodecaneso 35, 16146 Genova, Italy}\\
{\it E-mail addresses:} {\tt bartocci@dima.unige.it} (Claudio Bartocci); {\tt gentili@dima.unige.it} (Andrea Gentili)\\
{\sc Universit\'e Paris Diderot -- CNRS Laboratoire SPHERE, UMR 7219, b\^atiment Condorcet, case 7093, 
5, rue Thomas Mann, 75205 Paris cedex 13}\\
{\it E-mail address:} {\tt jean-jacques.szczeciniarz@univ-paris-diderot.fr} (Jean-Jacques Szczeciniarz)}

\end{document}